\date{\today}
\newcommand{\bbC}{{\mathbb{C}}}
\newcommand{\bbD}{{\mathbb{D}}}
\newcommand{\bbN}{{\mathbb{N}}}
\newcommand{\bbP}{{\mathbb{P}}}
\newcommand{\bbR}{{\mathbb{R}}}
\newcommand{\bbT}{{\mathbb{T}}}
\newcommand{\bbX}{{\mathbb{X}}}
\newcommand{\bbZ}{{\mathbb{Z}}}
\newcommand{\calA}{{\mathcal{A}}}
\newcommand{\calE}{{\mathcal{E}}}
\newcommand{\calF}{{\mathcal{F}}}
\newcommand{\calH}{{\mathcal{H}}}
\newcommand{\calL}{{\mathcal{L}}}
\newcommand{\calM}{{\mathcal{M}}}
\newcommand{\calU}{{\mathcal{U}}}
\newcommand{\calZ}{{\mathcal{Z}}}
\newcommand{\frA}{{\mathfrak{A}}}
\newcommand{\frF}{{\mathfrak{F}}}
\newcommand{\sfa}{{\mathsf{a}}}
\newcommand{\sfb}{{\mathsf{b}}}
\newcommand{\tr}{{\mathrm{Tr}}}
\newcommand{\UH}{{\calU\calH}}
\newcommand{\unstSect}{\mathsf{E}^+}
\newcommand{\stabSect}{\mathsf{E}^-}
\newcommand{\stunSect}{\mathsf{E}^\pm}
\newcommand{\GL}{{\mathrm{GL}}}
\newcommand{\SL}{{\mathrm{SL}}}
\renewcommand{\Re}{{\mathrm{Re}}}
\renewcommand{\Im}{{\mathrm{Im}}}
\newcommand{\rot}{{\mathrm{rot}}}
\newtheorem{theorem}{Theorem}[section]
\newtheorem{prop}[theorem]{Proposition}
\theoremstyle{definition}
\newtheorem{definition}[theorem]{Definition}
\newtheorem{example}[theorem]{Example}
\newtheorem{remark}[theorem]{Remark}
\theoremstyle{plain}
\numberwithin{equation}{section}
\newcommand{\set}[1]{\left\{#1\right\}}
\DeclareMathOperator{\supp}{supp}
\begin{document}

\title[Gap Labelling for Lee--Yang Zeros]{Gap Labels for Zeros of the Partition Function of the 1D Ising Model via the Schwartzman Homomorphism}

\author[D.\ Damanik]{David Damanik}
\address{Department of Mathematics, Rice University, Houston, TX~77005, USA}
\email{damanik@rice.edu}

\author[M.\ Embree]{Mark Embree}
\address{Department of Mathematics, Virginia Tech, Blacksburg, VA 24061, USA}
\email{embree@vt.edu}

\author[J.\ Fillman]{Jake Fillman}
\address{Department of Mathematics, Texas State University, San Marcos, TX 78666, USA}
\email{fillman@txstate.edu}

\dedicatory{Dedicated to the memory of Uwe Grimm}

\maketitle

\begin{abstract}
Inspired by the 1995 paper of Baake--Grimm--Pisani, we aim to explain the empirical observation that the distribution of Lee--Yang zeros corresponding to a one-dimensional Ising model appears to follow the gap labelling theorem. This follows by combining two main ingredients: first, the relation between the transfer matrix formalism for 1D Ising model and an ostensibly unrelated matrix formalism generating the Szeg\H{o} recursion for orthogonal polynomials on the unit circle, and second, the gap labelling theorem for CMV matrices.
\end{abstract}

\setcounter{tocdepth}{1}\tableofcontents

\hypersetup{
	linkcolor={black!30!blue},
	citecolor={red},
	urlcolor={black!30!blue}
}

\section{Introduction}

\subsection{Inspiration}

Since their discovery, gap labelling theorems have been a useful tool in the analysis of operators. In an abstract formulation, a gap labelling theorem says that if an operator family is generated by an ergodic process by continuously sampling along orbits of the process, then there is a countable subgroup of $\bbR$ that describes the distribution of eigenvalues of the operators. Moreover, this group depends on the ergodic process, but does not depend on the continuous function by which the operators are generated. The $K$-theory formulation of the gap labelling theorem, due to Bellissard and coworkers, realizes this subgroup as the range of a normalized trace on a suitable $C^*$ algebra \cite{Bel1992b, Bel2003}. This approach is further elucidated for substitution models in \cite{BaaGriJos1993, BelBovGhe1992}. The Johnson formulation realizes the gap labels in terms of the range of the Schwartzman asymptotic cycle for one-dimensional differential and finite difference operators \cite{Johnson1986JDE, JohnMos1982CMP}. For additional details and applications, see also \cite{DFGap} and references therein.

In some situations, distributions of zeros or eigenvalues appear to obey a law similar to the one predicted by a gap labelling theorem, even if no operators seem to be present. In 1995, Baake--Grimm--Pisani observed that the one-dimensional ferromagnetic Ising model appears to be just such a model when the magnetic couplings are chosen according to the Fibonacci substitution sequence \cite{BaaGriPis1995JSP}. The purpose of this note is to explain how certain unitary operators (hence gap labelling theorems) enter the picture and to contextualize the observations of \cite{BaaGriPis1995JSP}.

In short, the zeros of a Lee--Yang partition function can be identified with zeros of the trace of a $2 \times 2$ matrix propagator for the Szeg\H{o} recursion for orthogonal polynomials generated by a measure on the unit circle \cite{DamMunYes2013JSP, YessenThesis}. In turn, these zeros can be shown to be eigenvalues of unitary operators derived from such orthogonal polynomials. On the other hand, there exists a gap labelling theory for such unitary operators, which is a result of Geronimo--Johnson \cite{GeronimoJohnson1996JDE}. The rest of the note will spell this out in more detail, and we conclude with a gallery of examples.

\subsection{Ferromagnetic Ising Models on the Line}

Let us begin by defining objects associated with an Ising model on the line. This is not meant to be an exhaustive overview; we just collect the objects and results that we need to exhibit our main points. For additional background and history, we direct the reader to \cite{Baxter1989, Brush1967RMP} and references therein.

To specify a one-dimensional ferromagnetic Ising model, choose a sequence of magnetic couplings $\{J_n\}_{n=1}^\infty$ with $J_n>0$ for all $n$. 

For each $N \in \bbN$, denote $\Lambda_N = \{\pm 1\}^N = \{\pm\}^N$. Both versions of $\Lambda_N$ are convenient in certain formulas, so we freely pass between the two representations. On the lattice $\{1,2,\ldots,N\}$, the nearest neighbor Ising model with constant field $H$ is specified by the energy functional
\begin{equation}
E(\sigma) := -\frac{1}{k_B \tau} \sum_{n=1}^N \left( J_n \sigma_n\sigma_{n+1} + H\sigma_n\right), \quad \sigma= (\sigma_1,\ldots,\sigma_N) \in \Lambda_N,
\end{equation}
where $H$ denotes the magnetic field, $\tau>0$ is the temperature, $k_B>0$ is the Boltzmann constant, and $\sigma$ satisfies the periodic boundary condition 
\begin{equation} \label{eq:sigmaperbc}
\sigma_{N+1} = \sigma_1.
\end{equation} 
For convenience of notation, we introduce $p_n=J_n/(k_B \tau)$ and $q = H/(k_B\tau)$ so that
\begin{equation}
E(\sigma) = E(\sigma,q) := - \sum_{n=1}^N \left( p_n \sigma_n\sigma_{n+1} + q\sigma_n\right), \quad \sigma \in \Lambda_N.
\end{equation}
In physical applications, one is often interested in the \emph{Gibbs state} in which $\mathbb{P}(\sigma)$, the probability of the configuration $\sigma$, is proportional to $\exp(-E(\sigma))$, since this is the probability distribution on $\Lambda_N$ that maximizes the entropy $-\sum_\sigma \mathbb{P}(\sigma) \log \mathbb{P}(\sigma)$.  Naturally, the corresponding normalization constant, known as the \emph{partition function}, plays an important role. More precisely, the partition function is defined by 
\begin{equation}
Z_N(q):= \sum_{\sigma \in \Lambda_N} e^{-E(\sigma,q)}, \quad N \in \bbN.
\end{equation}
Introduce the variables
\begin{equation} \label{eq:wbetadef}
\zeta = e^{q}, \quad \beta_n = e^{p_n} 
\end{equation}
so that $Z_N$ can be viewed as a function of the variable $\zeta$:
\begin{equation} \label{eq:calZNdef}
\calZ_N(\zeta) = \sum_{\sigma \in \Lambda_N} \prod_{n=1}^N \beta_n^{\sigma_n\sigma_{n+1}} \zeta^{\sigma_n}.
\end{equation}
Due to the Lee--Yang theorem \cite{LeeYang1952PR}, zeros of $\calZ_N$ lie on the unit circle
\[ \partial \bbD := \{\zeta \in \bbC : |\zeta|=1\}.\]
Later on, we will see that the zeros of $\calZ_N$ are the eigenvalues of a suitable unitary operator, which gives another way to see that they lie on $\partial \bbD$ (compare Propositions~\ref{prop:DNzerosAreEigs} and \ref{prop:IsingzerosToDN}).

\subsection{The Ergodic Setting}

The examples that we will study in the present work are generated by sampling along orbits of ergodic topological dynamical systems. Let us make this more precise.

 Suppose $(\Omega,T,\mu)$ is an ergodic topological dynamical system (we will review definitions and results from ergodic theory in Section~\ref{sec:background}), denote $\bbR_+ = \{x \in \bbR : x>0\}$, and consider $g \in C(\Omega,\bbR_+)$. For each $\omega \in \Omega$, one obtains a realization of a ferromagnetic Ising model by taking $p_n = p_n(\omega) = g(T^n\omega)$. We denote the dependence on $\omega$ by writing, for example,
\begin{equation}
\calZ_N(\zeta,\omega) = \sum_{\sigma \in \Lambda_N} \prod_{n=1}^N e^{g(T^n\omega)\sigma_n\sigma_{n+1}} \zeta^{\sigma_n}
\end{equation}
for the partition function. Let us say that $\zeta \in \partial \bbD$ is in a \emph{spectral gap} of the Ising model if for some $\varepsilon>0$, a.e.\ $\omega$, and sufficiently large $N$ there are no zeros of $\calZ_N(\zeta,\omega)$ in an $\varepsilon$-neighborhood of $\zeta$.

\begin{theorem} \label{t:isingGL}
Let $(\Omega,T,\mu)$ denote an ergodic topological dynamical system such that $\Omega = \supp \mu$. There is a countable group $\frA = \frA(\Omega,T,\mu)$ such that the following statement holds true. 

Given $g \in C(\Omega,\bbR_+)$, let $\calZ_N(\zeta,\omega)$ denote the associated partition functions. If $\zeta_1,\zeta_2 \in \partial \bbD$ both lie in a spectral gap of the associated Ising model, then
\begin{equation} \label{eq:isingGL}
\lim_{N\to\infty} \frac{1}{N} \#\{\zeta \in [\zeta_1,\zeta_2] : \calZ_N(\zeta,\omega)=0\} \in \frA(\Omega,T,\mu), \quad \text{a.e.\ } \omega \in \Omega.
\end{equation}
where $[\zeta_1, \zeta_2]$ denotes the closed arc from $\zeta_1$ to $\zeta_2$ in the counterclockwise direction.
\end{theorem}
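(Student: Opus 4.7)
The plan is to convert the theorem into a statement about the integrated density of states of an ergodic family of unitary operators and then invoke the Schwartzman-type gap labelling theorem of Geronimo--Johnson \cite{GeronimoJohnson1996JDE}. To carry this out, I first identify the zeros of $\calZ_N(\cdot,\omega)$ as eigenvalues of a unitary matrix. Per the discussion preceding the theorem and Propositions~\ref{prop:DNzerosAreEigs}--\ref{prop:IsingzerosToDN}, these zeros coincide (with multiplicities, up to at most finitely many exceptions) with the eigenvalues of a finite unitary matrix $\calE_N(\omega)$ obtained from a periodic truncation of a CMV (or extended CMV) operator, whose Verblunsky coefficients satisfy $\alpha_n(\omega) = f_g(T^n \omega)$ for some $f_g \in C(\Omega, \bbD)$ determined continuously by $g$. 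This embeds the Ising partition-function zeros inside an ergodic OPUC framework.

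Second, standard ergodic theory for such CMV families yields an integrated density of states $k : \partial \bbD \to [0,1]$ (a nondecreasing, deterministic function) with the property that
\[
\lim_{N\to\infty} \frac{1}{N} \#\{\text{eigenvalues of } \calE_N(\omega) \text{ in } [\zeta_1,\zeta_2]\} = k(\zeta_2) - k(\zeta_1)
\]
for a.e.\ $\omega$, at every pair of continuity points $\zeta_1, \zeta_2$ of $k$. Because the IDS is locally constant on the complement of the almost-sure spectrum $\Sigma \subset \partial \bbD$, continuity holds at every point lying in a spectral gap, so combining this with Step~1 shows that the left-hand side of \eqref{eq:isingGL} exists and equals $k(\zeta_2) - k(\zeta_1)$ whenever $\zeta_1, \zeta_2$ are in gaps.

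Third, I would apply the Geronimo--Johnson theorem, which asserts that on any spectral gap of an ergodic OPUC/CMV family, the value of $k$ lies in the image of the Schwartzman homomorphism associated with $(\Omega,T,\mu)$, applied to (the homotopy class of) the cocycle into $\mathrm{U}(1)$ extracted from the Szeg\H{o} transfer matrix. Defining $\frA(\Omega,T,\mu)$ to be the image of the Schwartzman homomorphism, a countable subgroup of $\bbR$ that depends only on the base dynamics and not on $g$, it follows that $k(\zeta_1), k(\zeta_2) \in \frA$, and hence so does the limit appearing in \eqref{eq:isingGL}.

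The principal obstacle is reconciling the ad hoc definition of a spectral gap of the Ising model (zero-free neighborhoods of $\calZ_N(\cdot,\omega)$ for large $N$, a.e.\ $\omega$) with an honest spectral gap of the limiting ergodic CMV operator $\calE(\omega)$, so that the Geronimo--Johnson theorem actually applies. This requires combining upper semicontinuity of the finite-volume spectra of $\calE_N(\omega)$ with an $\omega$-uniform description of $\Sigma$ in terms of the accumulation of partition-function zeros. A secondary bookkeeping point is verifying that any discrepancy between the periodic boundary condition \eqref{eq:sigmaperbc} implicit in $\calZ_N$ and whatever boundary condition is natural for $\calE_N(\omega)$ contributes only $O(1)$ eigenvalues, which washes out under the $1/N$ normalization.
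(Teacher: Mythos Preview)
Your proposal is correct and takes essentially the same approach as the paper: relate the Lee--Yang zeros to discriminant zeros of an ergodic CMV family via Proposition~\ref{prop:IsingzerosToDN}, identify the limiting zero distribution with the density of states $\kappa$ via Proposition~\ref{prop:DOSviaFloquet}, and then apply the CMV gap labelling theorem (Theorem~\ref{t:CMVgl}), which rests on Geronimo--Johnson. The reconciliation you worry about is not a genuine obstacle: the Ising ``spectral gap'' condition forces $\kappa$ to assign zero mass to a neighborhood of each $\zeta_j$, so $\zeta_j \notin \supp\kappa = \Sigma$ and Theorem~\ref{t:CMVgl} applies directly, while the boundary-condition discrepancy is indeed $O(1)$ and disappears under the $1/N$ normalization exactly as you anticipate.
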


\begin{remark}
Let us make some comments.
\begin{enumerate}
\item[(a)] The group $\frA(\Omega,T,\mu)$ may be computed explicitly in many cases of interest. Since it arises from the application of a homomorphism studied by Schwartzman \cite{Schwartzman1957Annals}, it is sometimes called the \emph{Schwartzman group} of $(\Omega,T,\mu)$. We will describe $\frA(\Omega,T,\mu)$ more precisely in Section~\ref{sec:background}. In Section~\ref{sec:gallery}, we will look at some specific examples in which $\frA$ can be computed.
\item[(b)] In the case in which $(\Omega,T,\mu)$ is the strictly ergodic subshift generated by the Fibonacci substitution, Baake--Grimm--Pisani observed the conclusion of Theorem~\ref{t:isingGL} empirically \cite{BaaGriPis1995JSP}. We will discuss this further in Section~\ref{sec:gallery}.
\item[(c)] Theorem~\ref{t:isingGL} follows by combining some theorems and observations from a few different papers that came about since the publication of \cite{BaaGriPis1995JSP}. \end{enumerate}
\end{remark}

Uwe Grimm was an exceptionally generous and encouraging colleague who enjoyed finding surprising connections between ostensibly different mathematical problems. We hope that Uwe would have appreciated how recent developments in mathematical physics shed new light on his earlier observations.

The rest of the paper is laid out as follows. In Section~\ref{sec:background}, we discuss some background about dynamical systems and CMV matrices. Section~\ref{sec:LY2Sz} explains how the partition function may be related to a polynomial derived from a suitable CMV matrix, and then Section~\ref{sec:Sz2Schwartman} explains how to prove Theorem~\ref{t:isingGL}. We conclude with a discussion of specific classes of examples in Section~\ref{sec:gallery} as well as some relevant plots.

\subsection*{Acknowledgements} We are grateful to Michael Baake for helpful comments. D.D.\ was supported in part by NSF grants DMS--1700131 and DMS--2054752. J.F.\ was supported in part by NSF grant DMS--2213196 and Simons Foundation Collaboration Grant \#711663.  The authors thank the American Institute of Mathematics for hospitality and support through the SQuaRE program during a remote meeting in January~2021 and a January~2022 visit, during which this work was initiated. The authors also gratefully acknowledge support from the Simons Center for Geometry and Physics, at which some of this work was done.

\section{Background} \label{sec:background}

Let us begin by reviewing some relevant background. Since Theorem~\ref{t:isingGL} is proved by connecting ideas from dynamical systems and CMV matrices to the Ising model, we introduce the relevant notions from topological dynamics, the general theory of CMV matrices, and the theory of CMV matrices with dynamically defined coefficients.

\subsection{Odds and Ends from Dynamical Systems} \label{ssec:topdyn}

Let us briefly review some terminology and relevant results from dynamical systems. For further reading, one may consult textbook treatments such as Brin--Stuck \cite{BrinStuck2015Book}, Katok--Hasselblatt \cite{KatokHassel1995Book}, and Walters \cite{Walters1982:ErgTh}.

\begin{definition}
By a \emph{topological dynamical system}, we mean an ordered pair $(\Omega,T)$ in which $\Omega$ is a compact metric space and $T:\Omega \to \Omega$ is a homeomorphism. A Borel probability measure $\mu$ on $\Omega$ is called $T$\emph{-invariant} if $\mu(T^{-1}B) = \mu(B)$ for each Borel set $B \subseteq \Omega$. A $T$-invariant Borel probability measure $\mu$ is called \emph{ergodic} (with respect to $T$) if $\mu(E) \in \{0,1\}$ whenever $T^{-1}E=E$. In this case, we say that the triple $(\Omega,T,\mu)$ is an ergodic topological dynamical system.
\end{definition}

\begin{definition}Suppose $(\Omega,T)$ denotes a topological dynamical system. Given a continuous map $A:\Omega \to \GL(2,\bbC)$, the associated \emph{cocycle} is the skew product
\begin{equation}
(T,A):\Omega \times \bbC^2 \to \Omega \times \bbC^2, \quad (\omega,v)\mapsto (T\omega,A(\omega)v).
\end{equation}
The iterates of $A$ are then defined by $(T,A)^n = (T^n,A^n)$, which the reader can check implies
\begin{equation} \label{eq:cocycleIteratesDef}
A^n(\omega) =
\begin{cases}
A(T^{n-1}\omega) \cdots A(T\omega)A(\omega), & n \geq 1;\\
I, & n =0;\\ 
[A^{-n}(T^n\omega)]^{-1}, & n \leq -1.
\end{cases}
\end{equation}
\end{definition}

\begin{definition}
Consider a continuous cocycle $(T,A)$ over a topological dynamical system $(\Omega, T)$.
\begin{enumerate}
\item[(a)] We say that $(T,A)$ is \emph{uniformly hyperbolic} if for constants $c,\lambda > 0$ one has
\begin{equation}
\|A^n(\omega)\| \geq ce^{\lambda |n|}, \quad \forall \omega \in \Omega, \ n \in \bbZ.
\end{equation}

\item[(b)] We say that $(T,A)$ enjoys an \emph{exponential dichotomy} if there exist continuous maps $\stabSect,\unstSect:\Omega \to \bbC\bbP^1$ such that
\begin{equation}
A(\omega) \stunSect(\omega) = \stunSect(T\omega),
\end{equation}
and constants $C,\lambda >0$ such that
\begin{equation}
\|A^n(\omega)v_+\| , \ \|A^{-n}(\omega)v_-\| \leq Ce^{-\lambda n}, \quad \forall n \in \bbN, \ \omega \in \Omega,
\end{equation}
for all unit vectors $v_\pm \in \stunSect(\omega)$.
\end{enumerate}
\end{definition}

If $|\det A(\omega)| =1$, then (a) and (b) are equivalent. See \cite{DFLY2016DCDS} for proofs.

\subsection*{The Schwartzman Homomorphism}
As before, let $(\Omega,T,\mu)$ denote an ergodic topological dynamical system. To define the Schwartzman homomorphism and the associated groups, one needs a flow, that is, a continuous-time dynamical system. The most natural way to produce a continuous-time dynamical system that interpolates a discrete-time system such as $(\Omega,T)$ is to form the \emph{suspension}. To be more specific, the \emph{suspension} of $(\Omega,T,\mu)$, denoted $(X,\tau,\nu)$, is defined as follows. The space $X$ is given by
\begin{equation}
X = \Omega \times \bbR/ \! \sim, \quad \text{ where } (\omega,t) \sim (\omega',t') \iff t-t' \in \bbZ \text{ and } T^{t-t'}\omega = \omega'. \end{equation} 
We write $[\omega,t]$ for the class of $(\omega,t)$ in $X$. The flow on $X$, denoted by $\tau$, is the natural projection of the translation action of $\bbR$, that is, 
\begin{equation}
\tau^s([\omega,t]) = [\omega,t+s], \quad [\omega,t] \in X.
\end{equation}
 Finally, $\nu$ is the natural measure on $X$ given by
\begin{equation}
\int_X f\, d\nu = \int_\Omega \int_0^1 f([\omega,t]) \, d\mu(\omega) \, dt.
\end{equation}

Recall that $\phi_0,\phi_1 \in C(X,\bbT)$ are called \emph{homotopic}, denoted $\phi_0 \sim \phi_1$, if there exists a continuous $F:X \times [0,1] \to \bbT$ such that $F(\cdot,j) = \phi_j$ for $j=0,1$. Let $C^\sharp(X,\bbT) = C(X,\bbT)/\!\sim$ denote the set of homotopy classes of continuous maps $X \to \bbT$. Given $\phi \in C(X,\bbT)$, $x \in X$, one can lift the map $\phi_x:t \mapsto \phi(\tau^t x)$ to $\psi_x : \bbR \to \bbR$. From  \cite{Schwartzman1957Annals}, there exists a real number $\rot(\phi) \in \bbR$ such that
\[\rot(\phi) = \lim_{t \to \infty} \frac{\psi_x(t)}{t}, \quad \nu\text{-a.e.\ } x \in X.\]
The induced map $\frF_\nu:C^\sharp(X,\bbT) \to \bbR$ given by
\[ \frF_\nu([\phi]) = \rot(\phi) \quad \nu\text{-a.e.\ } x \in X, \]
is called the \emph{Schwartzman homomorphism}. When working with linear cocycles over a dynamical system, it is often convenient to work with maps into the projective line $\bbR\bbP^1$ instead of $\bbT$. For such maps, one can define $\frF_\nu$ by identifying $\bbR\bbP^1$ with $\bbT$ via the map $\bbT \ni \theta \mapsto \mathrm{span}\{(\cos\pi\theta,\sin\pi\theta)^\top\} \in \bbR\bbP^1$. Using this identification, if $\Lambda \in C(X,\bbR\bbP^1)$, one has
\begin{equation} \label{eq:schwartzmanFromDeltaArg}
\frF_\nu([\Lambda]) 
= \lim_{T\to \infty} \frac{1}{\pi T} \Delta_{\rm arg}^{[0,T]} \Lambda(\tau^t x), \quad \nu\text{-a.e.\ } x \in X,
\end{equation}
where $\Delta_{\rm arg}^I$ denotes the net change in the argument on the interval $I$. Since we have chosen to define the Schwartzman homomorphism and group by considering maps into $\bbT =   \bbR / \bbZ$ instead of $\bbR\bbP^1$, notice the factor of $\pi$ that appears in \eqref{eq:schwartzmanFromDeltaArg}.

\begin{definition}
With notation as above, the \emph{Schwartzman group} associated with $(\Omega,T,\mu)$, denoted $\frA(\Omega,T,\mu)$, is the range of the Schwartzman homomorphism, that is,
\begin{equation}
\frA(\Omega,T,\mu) = \frF_\nu(C^\sharp(X,\bbT)).
\end{equation}
\end{definition}

It is known and not hard to check that $\frA(\Omega,T,\mu)$ is a countable subgroup of $\bbR$ that contains $\bbZ$. Indeed, one can check that $C^\sharp(X,\bbT)$ has at most countably many elements and the (class of) the map $[\omega,t]\mapsto t \ \mathrm{mod} \ \bbZ$ is mapped to $1$ by $\frF_\nu$. The reader may see \cite{DFGap, ESO1} for details and further discussion. In Section~\ref{sec:gallery}, we will discuss some specific examples and identify their Schwartzman groups (without proofs, which can also be found in \cite{DFGap}).

\subsection{CMV Matrices}
Let us briefly review some aspects of the general theory of CMV matrices and Floquet theory for periodic CMV matrices. We refer the reader to the monographs \cite{Simon2005:OPUC1, Simon2005:OPUC2} for additional details and proofs.

Given a sequence $\{\alpha_n\}_{n \in \bbZ}$ with $\alpha_n \in \bbD$ for every $n$, the associated \emph{extended CMV matrix} $\calE = \calE_\alpha$ is given by
\begin{equation} \label{def:extcmv}
\calE
=
\begin{bmatrix}
\ddots & \ddots & \ddots & \ddots &&&&  \\
& \overline{\alpha_0}\rho_{-1} & \boxed{-\overline{\alpha_0}\alpha_{-1}} & \overline{\alpha_1}\rho_0 & \rho_1\rho_0 &&&  \\
& {\rho_0\rho_{-1}} & -{\rho_0}\alpha_{-1} & {-\overline{\alpha_1}\alpha_0} & -\rho_1 \alpha_0 &&&  \\
&&  & \overline{\alpha_2}\rho_1 & -\overline{\alpha_2}\alpha_1 & \overline{\alpha_3} \rho_2 & \rho_3\rho_2 & \\
&& & {\rho_2\rho_1} & -{\rho_2}\alpha_1 & -\overline{\alpha_3}\alpha_2 & -\rho_3\alpha_2 &    \\
&& && \ddots & \ddots & \ddots & \ddots &
\end{bmatrix},
\end{equation}
where $\rho_n = \sqrt{1-|\alpha_n|^2}$ and we use a box to denote the matrix element $\langle \delta_0,\calE_\alpha,\delta_0\rangle$. It is well known that $\calE$ enjoys a factorization $\calE = \calL\calM$, where $\calL$ and $\calM$ are block diagonal with $2 \times 2$ blocks. Namely,
\begin{align}
\calL & = \bigoplus \Theta(\alpha_{2n}) \\
\calM & = \bigoplus \Theta(\alpha_{2n+1}),
  \end{align}
where in both cases $\Theta(\alpha_j)$ acts on $\ell^2(\{j, j+1\})$ and $\Theta$ is given by 
 \begin{equation} \Theta(\alpha) 
= \begin{bmatrix} \overline{\alpha} & \sqrt{1-|\alpha|^2}\ \\ \sqrt{1-|\alpha|^2} & -\alpha \end{bmatrix}. \end{equation}

If $\alpha$ is periodic of period $N$ and $N$ is even, then we consider the Floquet matrices $\calF_N(\theta)$ given by restricting to $[0,N-1]$ with the boundary condition $u_{n+N}=e^{i\theta}u_n$. 

One can check that the $\calL\calM$ factorization induces a corresponding factorization of the  Floquet operators. That is, with
\begin{align}
\calL_N(\theta) & =  \begin{bmatrix} \Theta(\alpha_0) \\ & \Theta(\alpha_2) \\ && \ddots \\ &&& \Theta(\alpha_{N-2}) \end{bmatrix} \\
\calM_N(\theta) & = \begin{bmatrix} -\alpha_{N-1} &&&& e^{-i\theta}\rho_{N-1} \\ & \Theta(\alpha_1) \\ && \ddots \\ &&& \Theta(\alpha_{N-3}) \\ e^{i\theta}\rho_{N-1} &&&& \overline{\alpha}_{N-1} \end{bmatrix},
\end{align}
we have
\begin{equation}
\calF_N(\theta) = \calL_N(\theta)\calM_N(\theta).
\end{equation}
Since we are interested in computations, let us write out the exact form of $\calF_N(\theta)$ for relevant ranges of $N \in 2 \bbN$.
For $N=2$, 
\begin{align}
\nonumber
\calF_N(\theta)
& = \calL_N(\theta) \calM_N(\theta) \\
\nonumber
& = \begin{bmatrix} \overline{\alpha_0} & \rho_0 \\ {\rho_0} & -\alpha_0 \end{bmatrix}\begin{bmatrix} -\alpha_1 & e^{-i\theta}\rho_1 \\  e^{i\theta} \rho_1 & \overline{\alpha}_1 \end{bmatrix} \\
\label{eq:floqCMVper2}
& = \begin{bmatrix} -\alpha_1\overline{\alpha_0}+e^{i\theta}\rho_1\rho_0 
& e^{-i\theta}\rho_1\overline{\alpha_0} + \overline{\alpha_1}\rho_0 \\
-\alpha_1 \rho_0 - e^{i\theta}\rho_1\alpha_0
& e^{-i\theta}\rho_1 \rho_0 -\overline{\alpha_1}\alpha_0 \end{bmatrix}.
\end{align}
Similarly, for $N=4$, we have
\begin{equation}
\label{eq:floqCMVper4}
\calF_N(\theta)
= \begin{bmatrix}
-\overline{\alpha_0}\alpha_{3} 
& \overline{\alpha_1}\rho_0 & \rho_1\rho_0 
& {e^{-i\theta}\overline{\alpha_0}\rho_{3}}\\
-{\rho_0}\alpha_{3} 
& {-\overline{\alpha_1}\alpha_0} 
& -\rho_1 \alpha_0 
& {e^{-i\theta}{\rho_0\rho_{3}}} 
 \\
e^{i\theta} \rho_{3}\rho_{2} 
& \overline{\alpha_{2}}\rho_{1} & -\overline{\alpha_{2}}\alpha_{1} & \overline{\alpha_{3}} \rho_{2}  \\
-e^{i\theta} \rho_{3}\alpha_{2}  
& {\rho_{2}\rho_{1}} & -{\rho_{2}}\alpha_{1} & -\overline{\alpha_{3}}\alpha_{2}   
\end{bmatrix}.
\end{equation}

In general, for $N \geq 6$, these have the form

\begin{equation}\label{eq:floqCMVperGeq6}
\tiny
\calF_N(\theta)
=
\begin{bmatrix}
-\overline{\alpha_0}\alpha_{N-1} 
& \overline{\alpha_1}\rho_0 & \rho_1\rho_0 
&&& && {e^{-i\theta}\overline{\alpha_0}\rho_{N-1}}\\
-{\rho_0}\alpha_{N-1} 
& {-\overline{\alpha_1}\alpha_0} 
& -\rho_1 \alpha_0 
&&&&& {e^{-i\theta}{\rho_0\rho_{N-1}}} 
 \\ 
& \overline{\alpha_2}\rho_1 & -\overline{\alpha_2}\alpha_1 & \overline{\alpha_3} \rho_2 & \rho_3\rho_2 & \\
& {\rho_2\rho_1} & -{\rho_2}\alpha_1 & -\overline{\alpha_3}\alpha_2 & -\rho_3\alpha_2 &   \\ \\
&& \ddots  & \ddots  & \ddots  & \ddots \\ \\
&&& \overline{\alpha_{N-4}}\rho_{N-5} & -\overline{\alpha_{N-4}}\alpha_{N-5} & \overline{\alpha_{N-3}} \rho_{N-4} & \rho_{N-3}\rho_{N-4} & \\
&&& {\rho_{N-4}\rho_{N-5}} & -{\rho_{N-4}}\alpha_{N-5} & -\overline{\alpha_{N-3}}\alpha_{N-4} & -\rho_{N-3}\alpha_{N-4} &    \\
e^{i\theta} \rho_{N-1}\rho_{N-2} &&&&& \overline{\alpha_{N-2}}\rho_{N-3} & -\overline{\alpha_{N-2}}\alpha_{N-3} & \overline{\alpha_{N-1}} \rho_{N-2}  \\
-e^{i\theta} \rho_{N-1}\alpha_{N-2}  &&&&& {\rho_{N-2}\rho_{N-3}} & -{\rho_{N-2}}\alpha_{N-3} & -\overline{\alpha_{N-1}}\alpha_{N-2}   
\end{bmatrix}.
\end{equation}

Let us also define the Szeg\H{o} transfer matrices. For $z \in \bbC$ and $\alpha \in \bbD$, one defines
\begin{equation}\label{eq:Salphaz:def}
S(\alpha,z) = \frac{1}{\sqrt{1-|\alpha|^2}} \begin{bmatrix}  z & -\overline\alpha \\ -\alpha z & 1\end{bmatrix}.
\end{equation}
For our numerical calculations, we want to note the following fact, which relates zeros of the trace of a product of Szeg\H{o} matrices to eigenvalues of a suitable Floquet cutoff and follows from the general theory of periodic CMV matrices. For completeness, we include the short proof.

\begin{prop} \label{prop:DNzerosAreEigs}
Suppose $\{\alpha_n\}_{n \in \bbZ}$ is $N$-periodic, let $\Delta_N$ denote the associated discriminant given by
\begin{equation} \label{eq:normalizedDNz:def}
\Delta_N(z) = \tr(z^{-N/2}S(\alpha_N,z)S(\alpha_{N-1},z) \cdots S(\alpha_1,z)),
\end{equation} 
and consider the Floquet matrices as in \eqref{eq:floqCMVper2}, \eqref{eq:floqCMVper4}, and \eqref{eq:floqCMVperGeq6}. 
\begin{enumerate}
\item[{\rm(a)}] If $N$ is even, then $z$ is a zero of $\Delta_N$ if and only if $z$ is an eigenvalue of $\calF_N(\pi/2)$.
\item[{\rm(b)}] If $N$ is odd, then $z$ is a zero of $\Delta_N$ if and only if $z$ is an eigenvalue of $\calF_{2N}(\pi)$.
\end{enumerate}
\end{prop}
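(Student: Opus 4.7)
\medskip

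\noindent\textbf{Proof plan.} The underlying tool is the standard Floquet-theoretic identification of eigenvalues of $\calF_N(\theta)$ with solutions of a discriminant equation. The plan is to record this identification carefully, then in the odd case to pass from period $N$ to period $2N$ via a short Cayley--Hamilton computation on the transfer matrix.

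\smallskip

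\noindent\emph{Step 1 (Floquet identity).} Let $T_N(z) = S(\alpha_N,z)\cdots S(\alpha_1,z)$ denote the monodromy. From \eqref{eq:Salphaz:def} one has $\det S(\alpha_j,z) = z$, so $\det T_N(z) = z^N$. The general theory of periodic CMV matrices (see Simon \cite{Simon2005:OPUC2}) yields the identity, valid for even $N$,
\begin{equation}\label{eq:floq-disc}
\det\bigl(\calF_N(\theta) - z\bigr) = c_N(z)\bigl(\Delta_N(z) - 2\cos\theta\bigr),
\end{equation}
for some nonvanishing prefactor $c_N(z)$ on $\partial\bbD$. In other words, $z \in \partial\bbD$ is an eigenvalue of $\calF_N(\theta)$ if and only if $\Delta_N(z) = 2\cos\theta$. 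The plan for Step~1 is to verify this identity (or quote it); the direct check uses that a solution of the eigenvalue equation $\calF_N(\theta)u = zu$ lifts to a $\theta$-Floquet solution of the whole-line CMV eigenvalue equation, and such lifts correspond precisely to eigenvectors of $T_N(z)$ with eigenvalue $e^{i\theta} z^{N/2}$, forcing $\tr T_N(z) = z^{N/2}\cdot 2\cos\theta$.

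\smallskip

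\noindent\emph{Step 2 (Even case).} Specialize \eqref{eq:floq-disc} to $\theta = \pi/2$. Since $\cos(\pi/2) = 0$, the eigenvalues of $\calF_N(\pi/2)$ on $\partial\bbD$ are exactly the zeros of $\Delta_N$, proving part~(a).

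\smallskip

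\noindent\emph{Step 3 (Odd case via Cayley--Hamilton).} For $N$ odd, the CMV block decomposition requires an even period, so I double the period and apply Step~2 to period $2N$. The monodromy over $2N$ is $T_{2N}(z) = T_N(z)^2$, with $\det T_N(z) = z^N$. By Cayley--Hamilton,
\begin{equation}
T_N(z)^2 - \tr(T_N(z))\,T_N(z) + z^N I = 0,
\end{equation}
whence $\tr(T_N(z)^2) = \tr(T_N(z))^2 - 2z^N$. Dividing by $z^N$,
\begin{equation}
\Delta_{2N}(z) = \Delta_N(z)^2 - 2.
\end{equation}
Applying \eqref{eq:floq-disc} with period $2N$ and $\theta = \pi$ gives $\cos\theta = -1$, so $z$ is an eigenvalue of $\calF_{2N}(\pi)$ iff $\Delta_N(z)^2 - 2 = -2$, iff $\Delta_N(z) = 0$. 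This proves part~(b).

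\smallskip

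\noindent\emph{Main obstacle.} The only nontrivial ingredient is Step~1, where one must ensure the normalization convention matches the CMV literature; once the identity $\tr T_N(z) = z^{N/2}\cdot 2\cos\theta$ is in hand, the rest is the two-line Cayley--Hamilton calculation in Step~3. Since the statement is explicitly flagged as following from the general theory of periodic CMV matrices, I would simply cite \cite{Simon2005:OPUC2} for the identity and present Steps~2--3 in detail.
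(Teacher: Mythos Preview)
Your proposal is correct and follows essentially the same route as the paper: both invoke Simon's Floquet identity (the paper cites the explicit equation \cite[Eq.~(11.2.17)]{Simon2005:OPUC2}, giving the prefactor $c_N(z)=z^{N/2}\prod_j\rho_j$), specialize to $\theta=\pi/2$ for even $N$, and handle odd $N$ by doubling the period and using the Cayley--Hamilton/$\SL(2)$ trace identity to get $\Delta_{2N}=\Delta_N^2-2$. The only cosmetic difference is that the paper applies $A^2=\tr(A)A-I$ to the normalized matrix $z^{-N/2}T_N(z)\in\SL(2,\bbC)$, whereas you apply Cayley--Hamilton directly to $T_N(z)$ with $\det T_N(z)=z^N$; the resulting computation is the same.
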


\begin{proof}
Suppose $N$ is even.
Setting $\theta = \pi/2$ in \cite[Eq.~(11.2.17)]{Simon2005:OPUC2} (notice that $\beta = e^{i\theta}$ in Simon's notation) gives 
\begin{equation} \label{eq:fromdetFNtoTraceDeltaN}
\det(z-\calF_N(\pi/2)) = z^{N/2}\left[\prod_{j=0}^{N-1} \rho_j \right] \Delta_N(z).
\end{equation}
In view of \eqref{eq:normalizedDNz:def}, this implies that the zeros of $\Delta_N(z)$ and $\det(z-\calF_N(\pi/2))$ coincide, proving (a). The proof of (b) follows in a similar fashion by using \cite[Eq.~(11.2.17)]{Simon2005:OPUC2} with $\theta = \pi$ to get 
\begin{equation} 
\det(z-\calF_{2N}(\pi)) = z^{N}\left[\prod_{j=0}^{2N-1} \rho_j \right] (\Delta_{2N}(z)+2)
\end{equation}
together with the identity
\[A^2 = \tr(A)A-I\]
for $A \in \SL(2,\bbC)$, which implies $\Delta_{2N}(z) = [\Delta_N(z)]^2 - 2$.
\end{proof}

We mention this connection for two reasons. First, the results that one brings together to connect the Ising partition function to gap labels naturally relate to the two sides of \eqref{eq:fromdetFNtoTraceDeltaN}. More specifically, the gap labelling theorem that we will formulate in Theorem~\ref{t:CMVgl} concerns the density of states measure, which is related to normalized eigenvalue counting measures associated with cutoff operators in a natural way, and hence connects to the left hand side of \eqref{eq:fromdetFNtoTraceDeltaN}, whereas Theorem~\ref{prop:IsingzerosToDN} gives a connection between $\Delta_N(z)$, which appears on the right hand side of \eqref{eq:fromdetFNtoTraceDeltaN}, and the partition function of an associated Ising model. Secondly, finding roots of polynomials can be numerically delicate (depending on the basis in which the polynomials are expressed, the magnitude of the coefficients, and the algorithm for finding roots), whereas computing eigenvalues of unitary matrices is robust.  Indeed, a common way to compute roots of polynomials expressed in the monomial basis is to find eigenvalues of the associated companion matrix, which can yield poor results~\cite{TT94}.

\subsection{Ergodic CMV Matrices}

\begin{definition}
Let $(\Omega,T,\mu)$ denote an ergodic topological dynamical system as in Section~\ref{ssec:topdyn}, that is, $\Omega$ is a compact metric space, $T:\Omega \to \Omega$ is a homeomorphism, and $\mu$ is a $T$-ergodic probability measure. Given a continuous function $f:\Omega \to \bbD$, the associated \emph{ergodic family of CMV matrices} is $\{\calE(\omega)\}_{\omega \in \Omega}$, where $\calE(\omega)$ is defined by the coefficients
\begin{equation} \label{eq:ergCMVAlphaNDef}
\alpha_n(\omega)= f(T^n\omega), \quad \omega \in \Omega, \ n \in \bbZ.
\end{equation}
\end{definition}

\begin{definition}
For each $N \in \bbN$, $\omega \in \Omega$, we define the measure $\kappa_{\omega,N}$ to be the normalized eigenvalue counting measure of $\calE(\omega)\chi_{[0,N-1]}$, that is,
\begin{equation}
\int f \, d\kappa_{\omega,N} = \frac{1}{N} \tr\, f(\calE(\omega)\chi_{[0,N-1]}).
\end{equation}
We also define the \emph{density of states (DOS) measure} $\kappa$ by
\begin{equation}
\int_{\partial \bbD}g \, d\kappa = \int_\Omega \langle \delta_0, g(\calE(\omega))\delta_0 \rangle \, d\mu(\omega),
\end{equation}
and note that $\kappa_{\omega,N} \to \kappa$ weakly as $N \to \infty$ for $\mu$-a.e.\ $\omega \in \Omega$ by arguments using ergodicity~\cite[Theorem~10.5.21]{Simon2005:OPUC2}.
\end{definition}

Let us see that one can recover the DOS from the zeros of the discriminants associated with periodic operators defined by the ergodic family. Given $\calE(\omega)$ as above, define
\[ D_{N}(z,\omega) = \tr\left[ S(\alpha_{N}(\omega),z) \cdots S(\alpha_1(\omega),z)\right]. \]
It is known that $D_{N}(\cdot, \omega)$ has $N$ distinct zeros $\xi_1(\omega),\ldots, \xi_{N}(\omega)$ that lie on $\partial \bbD$ \cite{Simon2005:OPUC2}. We denote by $\nu_{\omega,N}$ the normalized zero-counting measure, that is,
\begin{equation}
\int_{\partial \bbD} f \, d\nu_{\omega,N} = 
\frac{1}{N} \sum_{n=1}^{N} f(\xi_n(\omega)).
\end{equation}

\begin{prop} \label{prop:DOSviaFloquet}
With notation as above, one has $\nu_{\omega,N} \to \kappa$ weakly for a.e.\ $\omega \in \Omega$.
\end{prop}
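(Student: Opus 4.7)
The plan is to use Proposition~\ref{prop:DNzerosAreEigs} to reinterpret $\nu_{\omega,N}$ as the normalized eigenvalue counting measure of a Floquet CMV matrix, and then to compare that measure with $\kappa_{\omega,N}$ via a uniformly-bounded-rank perturbation argument, invoking the assumed weak convergence $\kappa_{\omega,N}\to\kappa$.

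Since $\det S(\alpha,z)=z$, one has the (Laurent-polynomial) identity $D_N(z,\omega)=z^{N/2}\Delta_N(z,\omega)$, so on $\partial\bbD$ the zeros of $D_N(\cdot,\omega)$ coincide with those of $\Delta_N(\cdot,\omega)$. For even $N$, Proposition~\ref{prop:DNzerosAreEigs}(a) identifies these with the eigenvalues of the $N\times N$ Floquet matrix $\calF_N(\pi/2,\omega)$ built from $\alpha_1(\omega),\ldots,\alpha_N(\omega)$, each simple (since $D_N(\cdot,\omega)$ has $N$ distinct zeros), so $\nu_{\omega,N}=\tilde\kappa_{\omega,N}$, the normalized eigenvalue counting measure of $\calF_N(\pi/2,\omega)$. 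For odd $N$, the identity $\Delta_{2N}=\Delta_N^2-2$ used in the proof of Proposition~\ref{prop:DNzerosAreEigs}(b) shows that each zero of $\Delta_N$ appears with multiplicity two as an eigenvalue of $\calF_{2N}(\pi,\omega)$, and again $\nu_{\omega,N}$ coincides with the normalized eigenvalue counting measure of that Floquet matrix.

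Next I would compare $\tilde\kappa_{\omega,N}$ with $\kappa_{\omega,N}$ (and in the odd case with $\kappa_{\omega,2N}$). Matching the forms of $\calF_N(\pi/2,\omega)$ in \eqref{eq:floqCMVper2}--\eqref{eq:floqCMVperGeq6} against the extended CMV matrix \eqref{def:extcmv} shows that $\calF_N(\pi/2,\omega)$ and the $N\times N$ compression $\chi_{[0,N-1]}\calE(\omega)\chi_{[0,N-1]}$ agree except in a bounded number of corner entries (the wrap-around factors $e^{\pm i\pi/2}\rho_{N-1}\cdots$ and the replacement of $\alpha_{-1},\alpha_{-2}$ by $\alpha_{N-1},\alpha_{N-2}$), so their difference has rank bounded by an absolute constant $r$, uniformly in $N$ and $\omega$. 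Approximating $f\in C(\partial\bbD)$ uniformly by trigonometric polynomials $p$ of degree $d$ and using the telescoping identity $A^k-B^k=\sum_{j=0}^{k-1}A^j(A-B)B^{k-1-j}$ together with $|\tr(XY)|\leq \mathrm{rank}(X)\,\|X\|\,\|Y\|$ yields $|\tr p(A)-\tr p(B)|\leq C_p\,r$ whenever $\|A\|,\|B\|\leq 1$, which applies here since $\calF_N(\pi/2,\omega)$ is unitary and the CMV compression is a contraction. This gives
\begin{equation*}
\left|\int f\, d\tilde\kappa_{\omega,N} - \int f\, d\kappa_{\omega,N}\right| = O_f(1/N)
\end{equation*}
uniformly in $\omega\in\Omega$.

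Combining $\nu_{\omega,N}=\tilde\kappa_{\omega,N}$, this $O(1/N)$ comparison, and the a.s.\ weak convergence $\kappa_{\omega,N}\to\kappa$ yields $\nu_{\omega,N}\to\kappa$ weakly for $\mu$-a.e.\ $\omega$. I expect the main subtlety to be the rank-uniform trace estimate for non-normal matrices (the CMV compression is generally not unitary, so one must be careful about how $f$ is applied in the functional calculus); this is navigable because continuity of $f:\Omega\to\bbD$ on the compact space $\Omega$ forces $\sup_{\omega}|f(\omega)|<1$, so all Verblunsky coefficients stay in a fixed compact subset of $\bbD$ and every matrix entry, operator norm, and rank in the comparison is controlled uniformly in $(\omega,N)$.
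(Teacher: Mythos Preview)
Your approach is precisely the paper's: invoke Proposition~\ref{prop:DNzerosAreEigs} to realize $\nu_{\omega,N}$ as the normalized eigenvalue counting measure of a Floquet cutoff, then compare that cutoff with the truncation defining $\kappa_{\omega,N}$ via a uniformly-bounded-rank perturbation, and finish with the known a.s.\ convergence $\kappa_{\omega,N}\to\kappa$. The paper's ``direct calculation'' is exactly the trace/rank argument you spell out, and your remark that the compression is a contraction (so the telescoping trace bound applies even though it is not unitary) is the right way to handle the functional-calculus worry you flag.

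One small correction is needed in the odd-$N$ case. You propose comparing the periodized Floquet matrix $\calF_{2N}(\pi,\omega)$ with $\kappa_{\omega,2N}$, but $\kappa_{\omega,2N}$ is built from the genuine coefficients $\alpha_1(\omega),\ldots,\alpha_{2N}(\omega)$, whereas $\calF_{2N}(\pi,\omega)$ uses the $N$-periodic repetition $\alpha_1(\omega),\ldots,\alpha_N(\omega),\alpha_1(\omega),\ldots,\alpha_N(\omega)$; these differ in roughly $N$ entries, so the rank of the difference is \emph{not} uniformly bounded. An easy fix: compare $\calF_{2N}(\pi,\omega)$ instead with the $2N\times 2N$ block-diagonal matrix consisting of two copies of the $N\times N$ compression, which differs from it only near the two seams and the corners (bounded rank) and whose normalized eigenvalue counting measure equals $\kappa_{\omega,N}$. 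Alternatively, establish convergence along even $N$ as you do and then use interlacing of the zeros of $D_N$ and $D_{N\pm1}$ to pass to all $N$.
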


\begin{proof}
From \cite[Theorem~10.5.21]{Simon2005:OPUC2}, we know $\kappa_{\omega,N} \to \kappa$, so it suffices to show
that $\nu_{\omega,N}$ has the same weak limit as $\kappa_{\omega,N}$. Using Proposition~\ref{prop:DNzerosAreEigs}, we see that $\nu_{\omega,N}$ is the normalized eigenvalue counting measure of a suitable Floquet cutoff of $\calE(\omega)$, so the desired conclusion holds by a direct calculation.
\end{proof}

\section{From Ising, Lee, and Yang to Cantero, Moral, and Vel\'{a}zquez} \label{sec:LY2Sz}

Let us explain how the relationship between Lee--Yang zeros and discriminants of CMV matrices arises.  The first observation is that one can characterize the partition function $\calZ_N(\zeta)$ as the trace of a suitable matrix product. This matrix formalism is well-known to experts, but we include a detailed discussion for ease of reading. To define the aforementioned matrix product, write
\begin{equation} \label{eq:isingTMdef}
M(\beta,\zeta) = 
\begin{bmatrix} \beta\zeta & 1/\beta  \\[2mm] 1/\beta  & \beta/\zeta \end{bmatrix}
\end{equation}
for $\beta,\zeta \in \bbC \setminus\{0\}$.

\begin{prop} \label{prop:isingTMs}
Let $p_n>0$ be given for $1 \le n \le N$, define $\beta_n$ as in \eqref{eq:wbetadef}, and let $\calZ_N$ denote the associated partition function. One has
\begin{equation}
\calZ_N(\zeta) = \tr[M(\beta_N,\zeta)M(\beta_{N-1}, \zeta) \cdots M(\beta_1, \zeta)]
\end{equation}
for all $\zeta \neq 0$, where $M$ is given by \eqref{eq:isingTMdef}.
\end{prop}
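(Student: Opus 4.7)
The plan is to recognize this as the classical transfer matrix formulation of the 1D Ising partition function and to verify the identity by expanding the trace and matching the resulting sum term-by-term against the definition \eqref{eq:calZNdef}. The only subtlety is that the $\zeta$-factors in \eqref{eq:calZNdef} live on sites while the $\beta$-factors live on bonds, so the transfer matrix must symmetrize the site weight between neighboring bonds.

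Concretely, I would index $2 \times 2$ matrices by $\{+1,-1\}$ (matching the identification $\Lambda_N = \{\pm 1\}^N$) and verify the entrywise formula
\begin{equation*}
M(\beta,\zeta)_{\sigma,\sigma'} = \beta^{\sigma\sigma'}\,\zeta^{(\sigma+\sigma')/2},
\qquad \sigma,\sigma' \in \{\pm 1\}.
\end{equation*}
The four cases $(++), (+-), (-+), (--)$ reproduce the four entries $\beta\zeta,\, 1/\beta,\, 1/\beta,\, \beta/\zeta$ of \eqref{eq:isingTMdef}. Note that $M$ is symmetric, a fact we will use at the end.

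Next, expand the trace of the product as a sum over configurations:
\begin{equation*}
\tr\bigl[M(\beta_1,\zeta)M(\beta_2,\zeta)\cdots M(\beta_N,\zeta)\bigr]
= \sum_{\sigma \in \Lambda_N} \prod_{n=1}^{N} M(\beta_n,\zeta)_{\sigma_n,\sigma_{n+1}},
\end{equation*}
where the cyclic identification $\sigma_{N+1}=\sigma_1$ comes from the trace and matches the periodic boundary condition \eqref{eq:sigmaperbc}. Substituting the entrywise formula turns the summand into
\begin{equation*}
\prod_{n=1}^{N} \beta_n^{\sigma_n \sigma_{n+1}}\, \zeta^{(\sigma_n + \sigma_{n+1})/2}.
\end{equation*}
The key observation is that, under the periodic identification $\sigma_{N+1} = \sigma_1$, each $\sigma_n$ appears exactly twice in the exponent of $\zeta$ (once as $\sigma_n$ and once as $\sigma_{(n-1)+1}$), so $\sum_{n=1}^{N}(\sigma_n+\sigma_{n+1})/2 = \sum_{n=1}^{N} \sigma_n$. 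Hence the product reduces exactly to $\prod_n \beta_n^{\sigma_n\sigma_{n+1}} \zeta^{\sigma_n}$, which is precisely the summand in \eqref{eq:calZNdef}.

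The final cosmetic step is to reconcile the order of factors: the expansion above naturally produces $\tr[M(\beta_1,\zeta)\cdots M(\beta_N,\zeta)]$, whereas the statement writes the product in the reverse order. Since each $M(\beta_n,\zeta)$ is symmetric, transposition reverses the order inside the trace without changing its value, and the two expressions agree. There is no real obstacle here; the only thing one must be careful about is the bond-vs-site bookkeeping, which is why the $\zeta$-exponent must be split symmetrically as $(\sigma_n+\sigma_{n+1})/2$ rather than placed on a single endpoint.
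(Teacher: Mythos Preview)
Your proof is correct and follows essentially the same approach as the paper's own proof: both verify the entrywise formula $M(\beta,\zeta)_{\sigma,\sigma'}=\beta^{\sigma\sigma'}\zeta^{(\sigma+\sigma')/2}$, use the periodic boundary condition to rewrite $\sum_n(\sigma_n+\sigma_{n+1})/2=\sum_n\sigma_n$, identify the resulting sum with the trace of the product, and finally invoke symmetry of $M$ to reverse the order of factors.
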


\begin{proof}
The proof of this result can be found in most standard textbooks on solvable models in statistical mechanics, e.g., \cite{Baxter1989}. We reproduce the proof of \cite{Baxter1989} here to keep the paper more self-contained.

Write the entries of a $2 \times 2$ matrix as
\begin{equation} \label{eq:Asigmasigma'def}
A = \begin{bmatrix} A_{+,+} & A_{+,-} \\ A_{-,+} & A_{-,-} \end{bmatrix}.
\end{equation}
In particular, combining \eqref{eq:isingTMdef} and \eqref{eq:Asigmasigma'def} gives
\begin{equation}
M(\beta, \zeta)_{\sigma,\sigma'} = \beta^{\sigma\sigma'} \zeta^{(\sigma+\sigma')/2}.
\end{equation}
Consequently, using \eqref{eq:calZNdef} and \eqref{eq:sigmaperbc}, we have
\begin{align*}
\calZ_N(\zeta) 
& = \sum_{\sigma \in \Lambda_N} \prod_{n=1}^N \beta_n^{\sigma_n\sigma_{n+1}} \zeta^{\sigma_n}  \\
& = \sum_{\sigma \in \Lambda_N} \prod_{n=1}^N \beta_n^{\sigma_n\sigma_{n+1}} \zeta^{(\sigma_n+\sigma_{n+1})/2} \\
& = \sum_{\sigma \in \Lambda_N} \prod_{n=1}^N M(\beta_n,\zeta)_{\sigma_n,\sigma_{n+1}}.
\end{align*}
Now split the sum and use the periodic boundary condition again to get
\begin{align*}
\calZ_N(\zeta) 
& = \sum_{\sigma \in \Lambda_N} \prod_{n=1}^N M(\beta_n, \zeta)_{\sigma_n,\sigma_{n+1}} \\
& = \sum_{\sigma_1 \in \Lambda_1} \sum_{(\sigma_2,\ldots,\sigma_N) \in \Lambda_{N-1}} \prod_{n=1}^N M(\beta_n, \zeta)_{\sigma_n,\sigma_{n+1}} \\
& = \sum_{\sigma_1 \in \Lambda_1} [M(\beta_1,\zeta) \cdots M(\beta_N,\zeta)]_{\sigma_1,\sigma_1} \\
& = \tr[M(\beta_1, \zeta) \cdots M(\beta_N, \zeta)].
\end{align*}
The result follows by noting that $\beta_n>0$, so $M$ is symmetric and thus one can reverse the order of the factors by taking the transpose.
\end{proof}

With Proposition~\ref{prop:isingTMs} proved, let us now connect back to CMV matrices, by way of the Szeg\H{o} transfer matrices introduced in \eqref{eq:Salphaz:def}. 

\begin{prop} \label{prop:IsingzerosToDN}
With notation as in Proposition~\ref{prop:isingTMs}, the zeros of $\calZ_N(\zeta)$ are the same as the zeros of $D_N(\zeta^2)$, where
\begin{equation}
D_N(z) := \tr\left[S(\beta_N^{-2},z) \cdots S(\beta_1^{-2},z)\right].
\end{equation}
Equivalently, the zeros of $\calZ_N(\zeta)$ coincide with those of $\widetilde D_N(\zeta)$, where
\begin{equation}
\widetilde D_N(z) := \tr\left[S(\beta_N^{-2},z)S(0,z)S(\beta_{N-1}^{-2},z)S(0,z)\cdots S(\beta_1^{-2},z) S(0,z) \right].
\end{equation}
\end{prop}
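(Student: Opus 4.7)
The plan is to exhibit an explicit nonvanishing scalar factor relating $\calZ_N(\zeta)$ and $D_N(\zeta^2)$, so that their zero sets in $\bbC \setminus \{0\}$ coincide, and then to prove $\widetilde D_N(\zeta) = D_N(\zeta^2)$ directly. The key is to find factorizations of $M(\beta,\zeta)$ and $S(\beta^{-2},\zeta^2)$ that isolate the $\zeta$-dependence in a common diagonal matrix, after which cyclic invariance of the trace does all the work.

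Define $E(\zeta) = \begin{bmatrix} \zeta & 0 \\ 0 & 1 \end{bmatrix}$, $P(\beta) = \begin{bmatrix} \beta & \beta^{-1} \\ \beta^{-1} & \beta \end{bmatrix}$, $Q(\beta) = \begin{bmatrix} \beta^2 & -1 \\ -1 & \beta^2 \end{bmatrix}$, and $\sigma = \mathrm{diag}(1,-1)$. Three direct multiplications yield
\[ M(\beta,\zeta) = \zeta^{-1}\, E(\zeta)\, P(\beta)\, E(\zeta), \quad S(\beta^{-2},\zeta^2) = (\beta^4-1)^{-1/2}\, Q(\beta)\, E(\zeta)^2, \quad P(\beta) = \beta^{-1}\, \sigma\, Q(\beta)\, \sigma. \]
Substituting the first factorization into $\tr[M(\beta_N,\zeta) \cdots M(\beta_1,\zeta)]$, adjacent interior copies of $E(\zeta)$ merge to $E(\zeta)^2$, and cyclicity of the trace pairs the two outermost copies into one more $E(\zeta)^2$. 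Then I would replace each $P(\beta_n)$ by $\beta_n^{-1}\sigma Q(\beta_n)\sigma$: since $\sigma$ commutes with the diagonal $E(\zeta)^2$, the $\sigma$'s cancel in pairs via $\sigma^2=I$ and cyclicity. This yields
\[ \calZ_N(\zeta) = \zeta^{-N}\prod_{n=1}^N \beta_n^{-1}\; T_N(\zeta), \qquad D_N(\zeta^2) = \prod_{n=1}^N (\beta_n^4-1)^{-1/2}\; T_N(\zeta), \]
with the common factor $T_N(\zeta) = \tr[Q(\beta_N) E(\zeta)^2 Q(\beta_{N-1}) E(\zeta)^2 \cdots Q(\beta_1) E(\zeta)^2]$ emerging from the analogous expansion on the $D_N$-side. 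Dividing gives $D_N(\zeta^2) = \zeta^N \prod_n \beta_n(\beta_n^4-1)^{-1/2}\, \calZ_N(\zeta)$, and because $\beta_n > 1$ the prefactor never vanishes on $\bbC \setminus \{0\}$, so the zero sets coincide.

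For the second equivalence, note $S(0,\zeta) = E(\zeta)$, and a single matrix multiplication gives $S(\alpha,\zeta)\, S(0,\zeta) = S(\alpha,\zeta^2)$ for every $\alpha \in \bbD$. Applying this pairwise to the product defining $\widetilde D_N(\zeta)$ collapses it to $S(\beta_N^{-2},\zeta^2) \cdots S(\beta_1^{-2},\zeta^2)$, so $\widetilde D_N(\zeta) = D_N(\zeta^2)$ as polynomials in $\zeta$. The only nontrivial step in the whole argument is spotting the factorizations above; once they are in hand, everything reduces to cyclic-trace bookkeeping.
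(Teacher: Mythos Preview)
Your argument is correct and follows essentially the same strategy as the paper: both show that the product $M(\beta_N,\zeta)\cdots M(\beta_1,\zeta)$ is similar to a nonzero scalar multiple of $S(\beta_N^{-2},\zeta^2)\cdots S(\beta_1^{-2},\zeta^2)$, and both finish the second claim with the identity $S(\alpha,z)S(0,z)=S(\alpha,z^2)$. The only cosmetic difference is that the paper accomplishes the similarity in a single stroke---conjugating $M(\beta,\zeta)$ by $\mathrm{diag}(-1,\zeta)$ directly yields $\tfrac{\beta}{\zeta}\sqrt{1-\beta^{-2}}\,S(\beta^{-2},\zeta^2)$---whereas you split this into an $E(\zeta)$-factorization followed by a $\sigma$-conjugation; the two computations are equivalent once one observes $\mathrm{diag}(-1,\zeta)=-\sigma E(\zeta)$.
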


\begin{proof}
For $\zeta \neq 0$ and $\beta>1$, notice that
\begin{align*}
\begin{bmatrix} -1 & 0 \\ 0 & \zeta\end{bmatrix}
M(\beta,\zeta)
\begin{bmatrix} -1 & 0 \\ 0 & 1/\zeta\end{bmatrix}
& = \begin{bmatrix} -1 & 0 \\ 0 & \zeta\end{bmatrix}
\begin{bmatrix} \beta\zeta & \frac{1}{\beta} \\[2mm] \frac{1}{\beta} & \frac{\beta}{\zeta}\end{bmatrix}
\begin{bmatrix} -1 & 0 \\ 0 & 1/\zeta\end{bmatrix} \\
& = \begin{bmatrix} \beta\zeta & -\frac{1}{\beta\zeta} \\[2mm] -\frac{\zeta}{\beta} & \frac{\beta}{\zeta}\end{bmatrix} \\
& = \frac{\beta}{\zeta} \begin{bmatrix} \zeta^2 & -\frac{1}{\beta^2} \\[2mm] -\frac{\zeta^2}{\beta^2} & 1\end{bmatrix}  \\
& = \frac{\beta}{\zeta}\sqrt{1-\beta^{-2}}\,S(\beta^{-2},\zeta^2).
\end{align*}
Thus, $S(\beta_N^{-2}, \zeta^2) \cdots S(\beta_1^{-2}, \zeta^2)$ is similar to a nonzero multiple of $M(\beta_N,\zeta)\cdots M(\beta_1,\zeta)$ and the first claim follows from Proposition~\ref{prop:isingTMs}. The second follows follows from noting that
\[S(\alpha,z) S(0,z)= S(\alpha,z^2)\]
for any $\alpha \in \bbD$, $z \in \bbC$.
\end{proof}

\section{From Szeg\H{o} to Schwartzman \`a la Geronimo and Johnson} \label{sec:Sz2Schwartman}

In the previous section, we saw how to relate the partition functions of a ferromagnetic Ising model to discriminants associated with a family of periodic CMV matrices. Next, we want to explain how to relate zeros of the discriminant to the rotation number of the Szeg\H{o} cocycle and hence to the range of the Schwartzman homomorphism. Throughout this section we fix an ergodic topological dynamical system $(\Omega,T,\mu)$, a continuous $f:\Omega \to \bbD$, and let $\{\calE(\omega)\}$ denote the associated family of CMV matrices defined by \eqref{eq:ergCMVAlphaNDef}. We assume $\supp\mu = \Omega$.

By general arguments, there exists a compact set $\Sigma \subseteq \partial \bbD$ such that $\sigma(\calE(\omega))=\Sigma$ for $\mu$-a.e.\ $\omega \in \Omega$. Moreover, $\sigma(\calE(\omega))=\Sigma$ for any $\omega$ with a dense $T$-orbit.

Given this setup, we define
\[A_z(\omega) = z^{-1}S(f(T\omega),z)S(f(\omega),z), \quad \omega \in \Omega, \ z \in \bbC \setminus\{0\},\]
where $S$ is given by \eqref{eq:Salphaz:def}. We can then characterize the almost-sure spectrum $\Sigma$ as the complement of the set where $(T^2,A_z)$ is uniformly hyperbolic.

\begin{theorem}[Johnson's Theorem for CMV matrices] \label{t:cmv:johnson}
Assume $(\Omega,T,\mu)$ is an ergodic topological dynamical system such that $\supp \mu = \Omega$, $f \in C(\Omega,\bbD)$, and let $\Sigma$ denote the associated almost-sure spectrum associated with the family $\{\calE(\omega)\}_{\omega \in \Omega}$. We have
\begin{equation}
\partial \bbD \setminus \Sigma = \UH:=\{z \in \partial \bbD : (T^2,A_z) \text{ is uniformly hyperbolic}\}
\end{equation}
\end{theorem}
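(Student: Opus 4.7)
The plan is to prove the two inclusions separately, adapting Johnson's original argument for one-dimensional Schrödinger operators \cite{JohnMos1982CMP, Johnson1986JDE} to the CMV/Szeg\H{o} setting as carried out by Geronimo--Johnson \cite{GeronimoJohnson1996JDE}. A preliminary observation: a direct calculation from \eqref{eq:Salphaz:def} gives $\det S(\alpha,z)=z$, so the normalization in $A_z$ yields $\det A_z(\omega)=1$ for every $\omega$ and every $z\in\bbC\setminus\{0\}$. In particular $|\det A_z(\omega)|=1$, so by the equivalence recalled after the definition of exponential dichotomy (see \cite{DFLY2016DCDS}), uniform hyperbolicity of $(T^2,A_z)$ is equivalent to the existence of a continuous exponential dichotomy with sections $\stunSect:\Omega\to\bbC\bbP^1$. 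The bridge between the two sides of the desired equality is the standard correspondence between formal solutions of the eigenvalue equation $\calE(\omega)u=zu$ and orbits of the Szeg\H{o} cocycle: consecutive pairs of entries of such $u$ evolve, up to the scalar $z^{-1}$, by multiplication with $A_z(T^{2n}\omega)$, so growth/decay of solutions matches growth/decay of cocycle iterates.

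For the inclusion $\UH\subseteq\partial\bbD\setminus\Sigma$: suppose $(T^2,A_z)$ is uniformly hyperbolic. The stable and unstable sections $\stunSect(\omega)$ yield, via the correspondence above, two linearly independent formal solutions $u^\pm(\cdot,\omega)$ of $\calE(\omega)u=zu$ decaying exponentially at $\pm\infty$ with a rate that is uniform in $\omega$. From these one explicitly assembles a Green's function $G_z(m,n;\omega)$ with exponential off-diagonal decay, which provides a bounded inverse for $\calE(\omega)-zI$ for every $\omega\in\Omega$. Hence $z$ lies in the resolvent set of every $\calE(\omega)$, and in particular $z\notin\Sigma$.

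For the reverse inclusion $\partial\bbD\setminus\Sigma\subseteq\UH$: suppose $z\in\partial\bbD$ with $z\notin\Sigma$. Then for $\mu$-a.e.\ $\omega$ the operator $\calE(\omega)-zI$ is boundedly invertible on $\ell^2(\bbZ)$, and a Combes--Thomas-type estimate (leveraging the pentadiagonal structure of $\calE$) shows that $G_z(m,n;\omega)$ decays exponentially in $|m-n|$ at a rate depending only on $\mathrm{dist}(z,\Sigma)$. Applying $G_z$ to $\delta_0$ and taking tails gives solutions of the formal eigenvalue equation that are square-summable at $\pm\infty$; pushing these through the cocycle correspondence supplies continuous invariant subspaces $\stunSect(\omega)$ with the exponential contraction rates required for a dichotomy, hence for uniform hyperbolicity.

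The key obstacle is precisely the upgrade from $\mu$-a.e.\ exponential decay of solutions to a deterministic rate that is uniform across all $\omega\in\Omega$; without this one only obtains non-uniform hyperbolicity, which is strictly weaker. The resolution uses the standing assumption $\supp\mu=\Omega$, so that the $T$-orbit of a generic $\omega$ is dense in $\Omega$: the rate obtained from the resolvent estimate is constant on the $\mu$-generic set, and continuity of $\omega\mapsto A_z(\omega)$ together with compactness of $\Omega$ propagates this rate to every $\omega$ via uniform approximation along the cocycle. This argument is essentially the CMV analog of Johnson's original propagation argument and is carried out in detail in \cite{GeronimoJohnson1996JDE}. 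The two-step nature of the cocycle (over $T^2$ rather than $T$) is a minor bookkeeping matter reflecting the natural two-step block structure inherent in the $\calL\calM$ factorization of $\calE$.
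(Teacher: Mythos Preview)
The paper does not actually prove this theorem: immediately after the statement the authors write only ``See \cite{DFLY2016DCDS} for additional details and a proof.'' So there is no in-paper argument to compare against, and your sketch is being measured against the approach of \cite{DFLY2016DCDS} rather than anything the authors do here. At the level of an outline your strategy---build the Green's function from the dichotomy sections for $\UH\subseteq\partial\bbD\setminus\Sigma$, and for the converse extract exponentially decaying Weyl solutions from the resolvent, then upgrade the a.e.\ decay to a uniform rate using $\supp\mu=\Omega$ and compactness---is the standard Johnson-type argument and is indeed what \cite{DFLY2016DCDS} carries out. Note that the correct reference for the detailed proof is \cite{DFLY2016DCDS}, not \cite{GeronimoJohnson1996JDE} as you write; the latter develops the rotation number and gap labelling for OPUC but does not prove the Johnson characterization in this form.

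One point in your sketch is genuinely loose and would need repair in a full proof. You assert that ``consecutive pairs of entries of such $u$ evolve, up to the scalar $z^{-1}$, by multiplication with $A_z(T^{2n}\omega)$.'' This is not literally true: the Szeg\H{o} matrices $S(\alpha,z)$ propagate the pair $(\varphi_n,\varphi_n^*)$ of orthogonal polynomials and their reversed polynomials, not consecutive entries of an $\ell^2$-solution of $\calE(\omega)u=zu$. The bridge to the eigenvalue equation goes through the Gesztesy--Zinchenko transfer matrices (or an equivalent reformulation), which are related to the Szeg\H{o} matrices by an explicit $z$-dependent conjugation; \cite{DFLY2016DCDS} sets this up carefully before running the Johnson argument. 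The distinction does not change the architecture of your proof, but the cocycle-to-eigenfunction correspondence requires this intermediate step and is not as immediate as your sketch suggests.
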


See \cite{DFLY2016DCDS} for additional details and a proof. One major application of Theorem~\ref{t:cmv:johnson} is the gap labelling theorem for ergodic CMV matrices, which we formulate presently.

\begin{theorem} \label{t:CMVgl}
Let $(\Omega,T,\mu)$ denote an ergodic topological dynamical system such that $\Omega = \supp \mu$, $f \in C(\Omega,\bbD)$, and $\{\calE(\omega)\}_{\omega \in \Omega}$ the associated ergodic family of extended CMV matrices. Let $\kappa$ and $\Sigma$ denote the density of states measure and almost-sure spectrum associated with this family. For any $z_1,z_2 \in \partial \bbD \setminus  \Sigma$, one has
\begin{equation}
\kappa([z_1,z_2]) \in \frA(\Omega,T,\mu),
\end{equation}
where $[z_1,z_2]$ denotes the closed arc from $z_1$ to $z_2$ in the counterclockwise direction.
\end{theorem}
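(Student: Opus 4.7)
The plan is to express $\kappa([z_1,z_2])$ as the Schwartzman invariant of a continuous map $X \to \bbT$ built from the Szeg\H{o} cocycle, following the CMV analogue \cite{GeronimoJohnson1996JDE} of the Johnson--Moser scheme \cite{JohnMos1982CMP}.

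First, I would introduce a rotation number $\rot(z)$ for the cocycle $(T^2,A_z)$. For $z \in \partial \bbD$, each $A_z(\omega)$ preserves an indefinite Hermitian form, hence acts on $\bbR\bbP^1$, and Birkhoff averaging of the angular increment produces an $\omega$-a.s.\ constant value $\rot(z)\in \bbR/\bbZ$. A Thouless-type identity (proved by combining Proposition~\ref{prop:DNzerosAreEigs} and Proposition~\ref{prop:DOSviaFloquet} with the interlacing properties of Floquet eigenvalues for CMV matrices) then yields
\begin{equation}
\kappa([z_1,z_2]) = \rot(z_2)-\rot(z_1) \pmod{\bbZ}
\end{equation}
for all $z_1,z_2 \in \partial \bbD$.

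Second, Theorem~\ref{t:cmv:johnson} implies that for $z \in \partial \bbD \setminus \Sigma = \UH$ the cocycle $(T^2,A_z)$ admits continuous invariant sections $\stunSect_z : \Omega \to \bbR\bbP^1$. Continuous interpolation of $\stabSect_z$ along the suspension flow $\tau$ (via the projective cocycle action at non-integer times) promotes it to a continuous map $\Phi_z : X \to \bbR\bbP^1 \simeq \bbT$ whose Schwartzman invariant coincides with $\rot(z)$ by \eqref{eq:schwartzmanFromDeltaArg}. As $z$ varies through one connected component of $\partial \bbD \setminus \Sigma$, the section $\stabSect_z$ deforms continuously in $z$, so the homotopy class $[\Phi_z] \in C^\sharp(X,\bbT)$ is locally constant on gaps. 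Hence, for any $z_1,z_2 \in \partial \bbD \setminus \Sigma$, the class $[\Phi_{z_2}\Phi_{z_1}^{-1}]$ is a well-defined element of $C^\sharp(X,\bbT)$, and
\begin{equation}
\kappa([z_1,z_2]) \equiv \frF_\nu\bigl([\Phi_{z_2}\Phi_{z_1}^{-1}]\bigr) \pmod{\bbZ},
\end{equation}
which lies in $\frA(\Omega,T,\mu)$ since $\bbZ \subseteq \frA$.

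The main technical obstacle is the Thouless-type identity in Step~1. It requires careful bookkeeping with the $z^{-N/2}$ normalization appearing in \eqref{eq:normalizedDNz:def} and with the fact that $(T^2,A_z)$ advances two Szeg\H{o} steps per iterate, so that the angular winding of the Szeg\H{o} cocycle is correctly matched against the density of Floquet eigenvalues appearing on the left-hand side of \eqref{eq:fromdetFNtoTraceDeltaN}. Once this identity is in place the remainder of the argument is essentially soft: uniform hyperbolicity on $\UH$ supplies continuous sections that depend continuously on $z$, and local constancy of the resulting homotopy class together with the definition of $\frA$ does the rest.
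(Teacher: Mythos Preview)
Your proposal is correct and follows the same two-step architecture as the paper's proof: identify $\kappa([z_1,z_2])$ with a difference of rotation numbers of the Szeg\H{o} cocycle, then show that in spectral gaps the rotation number arises as the Schwartzman invariant of the continuous invariant section supplied by uniform hyperbolicity. The only minor variation is in how the Thouless-type identity is obtained: the paper cites the complex-analytic route (the harmonic conjugate pair from \cite[Theorem~4.7]{GeronimoJohnson1996JDE} combined with \cite[Theorems~10.5.8 and~10.5.21]{Simon2005:OPUC2}), whereas you propose the more hands-on zero-counting/interlacing argument via Propositions~\ref{prop:DNzerosAreEigs} and~\ref{prop:DOSviaFloquet}; both are standard and lead to the same conclusion.
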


\begin{proof}
This result is a consequence of \cite{GeronimoJohnson1996JDE} and \cite{Simon2005:OPUC1}. Let $\rho$ denote the rotation number associated with the family $\{\calE(\omega)\}$, as defined in \cite[Section~4]{GeronimoJohnson1996JDE}; notice that this comes with a factor $\frac{1}{2}$ (cf.~\cite[Eq.~(4.9)]{GeronimoJohnson1996JDE}). On the one hand, \cite[Theorem~5.6]{GeronimoJohnson1996JDE} asserts that $\rho$ takes values in ($2\pi$ times) the Schwartzman group in the gaps of $\Sigma$ (that is, on connected components of $\partial \bbD \setminus \Sigma$. Notice that the Schwartzman group in \cite{GeronimoJohnson1996JDE} differs from ours by a factor of $2\pi$; compare the first displayed equation on \cite[p.~171]{GeronimoJohnson1996JDE}. On the other hand, $\rho$ is related to the Lyaupnov exponent via \cite[Theorem~4.7]{GeronimoJohnson1996JDE}. Namely, there is an analytic function $w(z)$ such that the boundary values of $\Re\, w$ give the Lyapunov exponent and the boundary values of $\Im\, w$ give $\rho$. By combining this with \cite[Theorems~10.5.8 and~10.5.21]{Simon2005:OPUC2}, we are done.
\end{proof}

The main result follows by combining all of these pieces.

\begin{proof}[Proof of Theorem~\ref{t:isingGL}]
This follows by combining Proposition~\ref{prop:DOSviaFloquet}, Proposition~\ref{prop:IsingzerosToDN} and Theorem~\ref{t:CMVgl}. More specifically, Propositions~\ref{prop:DOSviaFloquet} and \ref{prop:IsingzerosToDN} show that the left-hand side of \eqref{eq:isingGL} is $\kappa([z_1,z_2])$ and Theorem~\ref{t:CMVgl} shows that this quantity belongs to $\frA$.
\end{proof} 

\section{A Gallery of Lee--Yang and CMV Zeros} \label{sec:gallery}

Let us conclude with a discussion of several examples, including some plots of the distributions of the relevant zeros. By combining Propositions~\ref{prop:IsingzerosToDN} and~\ref{prop:DNzerosAreEigs}, the zeros of $\calZ_N$ may be computed by finding eigenvalues of $\calF_{N}(\pi/2)$, where $\calF_{N}$ denotes the Floquet matrices associated with a suitable extended CMV matrix, which is the approach employed in the numerics below.

Before embarking on these computations, we note one potential opportunity for acceleration of the 
eigenvalue calculation for large-scale problems.  The corner entries in~\eqref{eq:floqCMVperGeq6}
cause the matrices $\calF_N(\theta)$ to have full bandwidth.  
In the case of Jacobi matrices, a simple reordering described in~\cite{PEF2015IEOT} results
in Hermitian matrices of bandwidth~5, allowing for efficient numerical eigenvalue computations.
An analogous reordering is possible here, though the CMV structure makes this reordering
a bit more intricate; the result is a matrix having bandwidth~9.
Such structure is less clearly exploitable in non-Hermitian eigenvalue computations,
but QR-related algorithms for CMV matrices (see~\cite{BE91,VW12}) could 
potentially be adapted and extended to this case.

We consider a CMV matrix of even period $N$, and define a permutation $p:\{1,\ldots,N\} \to \{1,\ldots,N\}$ by
\begin{equation}
p(j) = 
\begin{cases}
2j-1, & \mbox{$j$ odd and $1 \le j < N/2$;} \\
2N+1-2j, & \mbox{$j$ odd and $N/2 < j < N$;}\\
2j, & \mbox{$j$ even and $1 < j \le N/2$;} \\
2N+2-2j, & \mbox{$j$ even and $N/2 < j \leq N$.}
\end{cases}
\end{equation}
Let $P$ denote the associated permutation matrix, which is given by $Pe_j = e_{p(j)}$. Equivalently,
\begin{equation}
P_{i,j} = \delta_{i,p(j)}
\end{equation} 
The bandwidth of the reordered matrix $P\calF_N(\theta)P^*$ is at most $9$.

\begin{prop} \label{prop:bandwidth}
Suppose $N \geq 2$ is even. The bandwidth of $\widetilde{\calF}_N(\theta) := P\calF_N(\theta)P^*$ is at most $9$. More precisely,
\begin{equation}
\langle e_j, \widetilde \calF_N(\theta)e_k \rangle = 0
\end{equation} 
whenever $|j-k|>4$.
\end{prop}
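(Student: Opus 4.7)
The plan is a direct case analysis on the sparsity pattern of $\calF_N(\theta)$, using the piecewise definition of $p$ to track how the permutation rearranges each row's support.

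Reading off \eqref{eq:floqCMVper2}--\eqref{eq:floqCMVperGeq6}, each row of $\calF_N(\theta)$ has at most four nonzero entries; the column support of row $i$ is $\{1,2,3,N\}$ for $i \in \{1,2\}$, it is $\{2m-2,2m-1,2m,2m+1\}$ for the interior pair $i \in \{2m-1,2m\}$ with $2 \le m \le N/2 - 1$, and it is $\{1,N-2,N-1,N\}$ for $i \in \{N-1,N\}$. Since $\langle e_i, \widetilde\calF_N(\theta) e_k\rangle$ equals $\langle e_{p^{-1}(i)}, \calF_N(\theta) e_{p^{-1}(k)}\rangle$, the claim reduces to showing that for every original row index $a$ and every column $b$ in the support of row $a$ one has $|p(a) - p(b)| \le 4$.

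The case analysis splits by where the row pair sits relative to $N/2$. For an interior pair lying entirely below the midpoint (i.e., $2m+1 \le N/2$), substituting $p(j) = 2j-1$ for odd $j$ and $p(j)=2j$ for even $j$ yields the image set $\{4m-4,\, 4m-3,\, 4m,\, 4m+1\}$, which has diameter $5$ and contains both row images $p(2m-1) = 4m-3$ and $p(2m) = 4m$; hence every relevant difference is at most $4$. Interior pairs lying entirely above the midpoint are handled symmetrically using the reflected formulas $p(j) = 2N+1-2j$ and $p(j) = 2N+2-2j$. The two boundary pairs are dispatched by direct substitution: the image sets for $\{1,2\}$ and $\{N-1,N\}$ are $\{p(1),p(2),p(3),p(N)\} = \{1,4,5,2\}$ and $\{p(1),p(N-2),p(N-1),p(N)\} = \{1,6,3,2\}$, and a quick check verifies $|p(a)-p(b)| \le 4$ for each $a \in \{1,2\}$ (respectively, $\{N-1,N\}$) and each $b$ in the corresponding support.

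The main obstacle is the straddling case, where the support $\{2m-2,2m-1,2m,2m+1\}$ crosses $N/2$ so that both branches of $p$ must be applied on the same support. For this argument I would assume $4 \mid N$, since otherwise the value $j = N/2$ is not unambiguously covered by the piecewise formula as stated; under this assumption the straddling row pairs are exactly $m = N/4$ and $m = N/4+1$. Using $p(N/2) = N$ together with $p(N/2-1) = N-3$ and $p(N/2-2) = N-4$ on the lower side and $p(N/2+1) = N-1$, $p(N/2+2) = N-2$, $p(N/2+3) = N-5$ on the upper side, the respective image sets come out to $\{N-4, N-3, N-1, N\}$ and $\{N-5, N-2, N-1, N\}$, and in each case both row images lie within distance $4$ of every element. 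That completes the case analysis; the same computations show that bandwidth $9$ is in fact sharp, for instance via $|p(1)-p(3)| = 4$ in the top boundary pair.
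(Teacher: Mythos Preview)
Your argument is essentially the same as the paper's: both reduce to the combinatorial claim that $p$ sends cyclically close indices (within $d_N$-distance $2$) to indices at ordinary distance at most $4$, and both verify this by direct case analysis using the piecewise formula for $p$. The paper organizes the cases as pairs $(m,m+1)$ and $(m,m+2)$ (writing out only the former in full), whereas you organize them by the row pairs of $\calF_N(\theta)$; this is the same computation grouped differently, and your treatment of the cases you do handle is in fact more complete than the paper's.

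Your restriction to $4 \mid N$ is a fair reaction to the definition of $p$ as stated, which does not assign a value to $j = N/2$ when $N/2$ is odd. The natural repair is to extend the first odd branch to $j \le N/2$, yielding $p(N/2) = N-1$ (one checks this still gives a permutation), after which one additional straddling configuration must be verified. Without that case your argument is incomplete for $N \equiv 2 \pmod 4$, though the paper's own case list is equally silent on the boundary value $m = N/2$.
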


\begin{proof}
If $N=2$ or $4$, the claim holds vacuously, so assume $N \geq 6$.
Write $d_N(j,k) = \min\{|j-k|,N-|j-k|\}$, and notice that $\langle e_j, \calF_N(\theta)e_k\rangle=0$ if $d_N(j,k)>2$. Since
\begin{equation}
\langle e_j,\widetilde{\calF}_N(\theta)e_k \rangle = 
\langle e_{p^{-1}(j)}, \calF_N(\theta)e_{p^{-1}(k)} \rangle,
\end{equation} 
it suffices to demonstrate 
\begin{equation}\label{eq:bandwidthgoal}
  |j-k|>4 \implies d_N(p^{-1}(j),p^{-1}(k))>2.
\end{equation}
Equivalently, we may show
\begin{equation}\label{eq:bandwidthgoal2}
  d_N(\ell,m)\leq 2 \implies |p(\ell) - p(m)| \leq 4.
\end{equation}
It is straightforward (albeit a little tedious) to verify \eqref{eq:bandwidthgoal2} from the definition of $p$. Indeed, if $\ell = m+1$, we have (using $N+1=1$)
\[p(m+1) - p(m) = \begin{cases} 
2(m+1)-(2m-1) = 3, & \mbox{$m$ odd and  $m<N/2$;} \\
2(m+1)-1-2m = 1, & \mbox{$m$ even and  $m<N/2$;} \\ 
2N+2-2(m+1) - (2N+1-2m) = -1, & \mbox{$m$ odd and  $N/2<m<N$;} \\ 
2N+1-2(m+1) - (2N+2-2m) = -3, & \mbox{$m$ even and $N/2<m<N$;} \\
1-2 = -1, & \mbox{$m=N$.} \end{cases}\]
The results are similar, but slightly more laborious for $p(m+2)-p(m)$.
\end{proof}

For inspiration and context, here is a picture of the permutation when $N=24$. 
\begin{center}
\includegraphics[scale=.5]{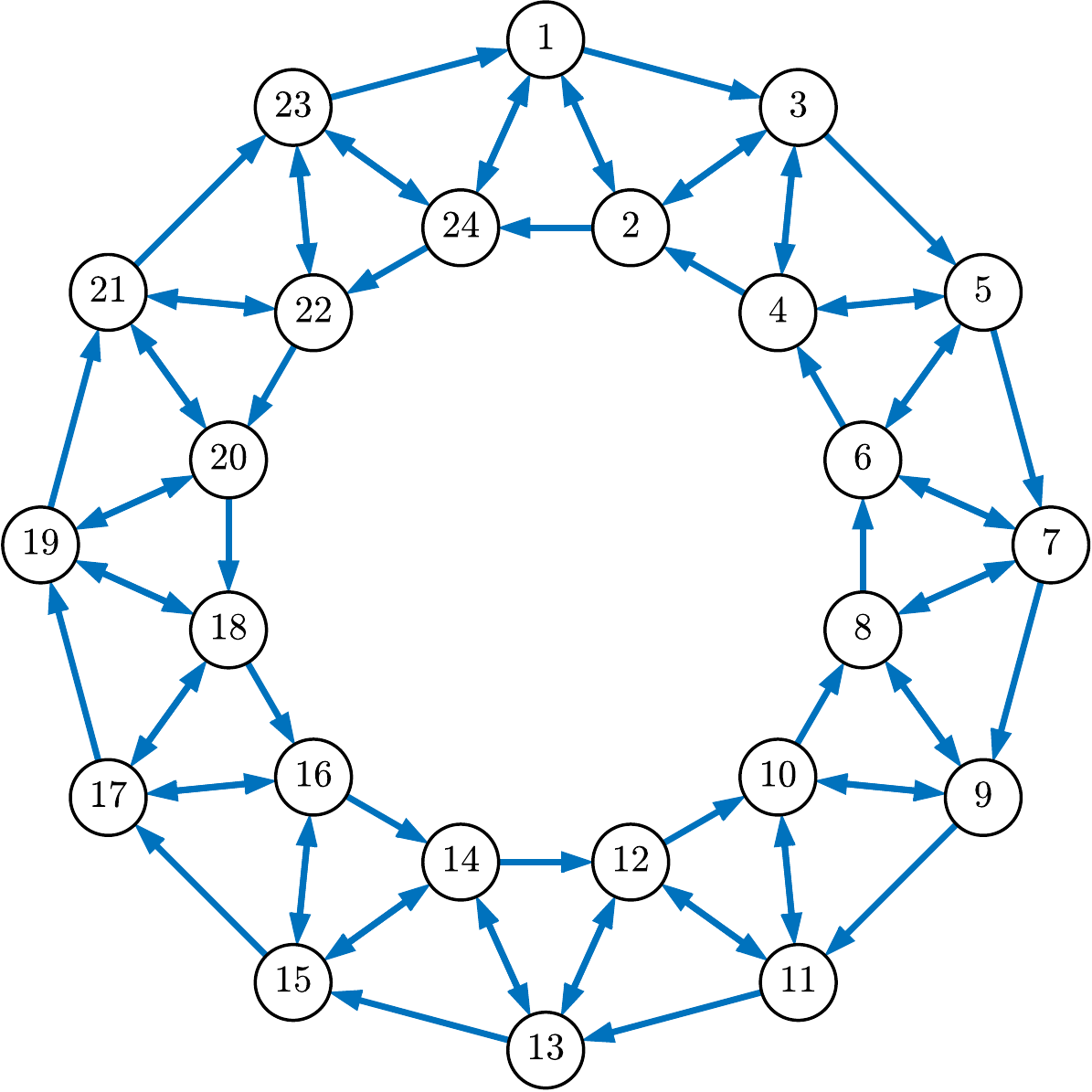}
\qquad
\includegraphics[scale=.5]{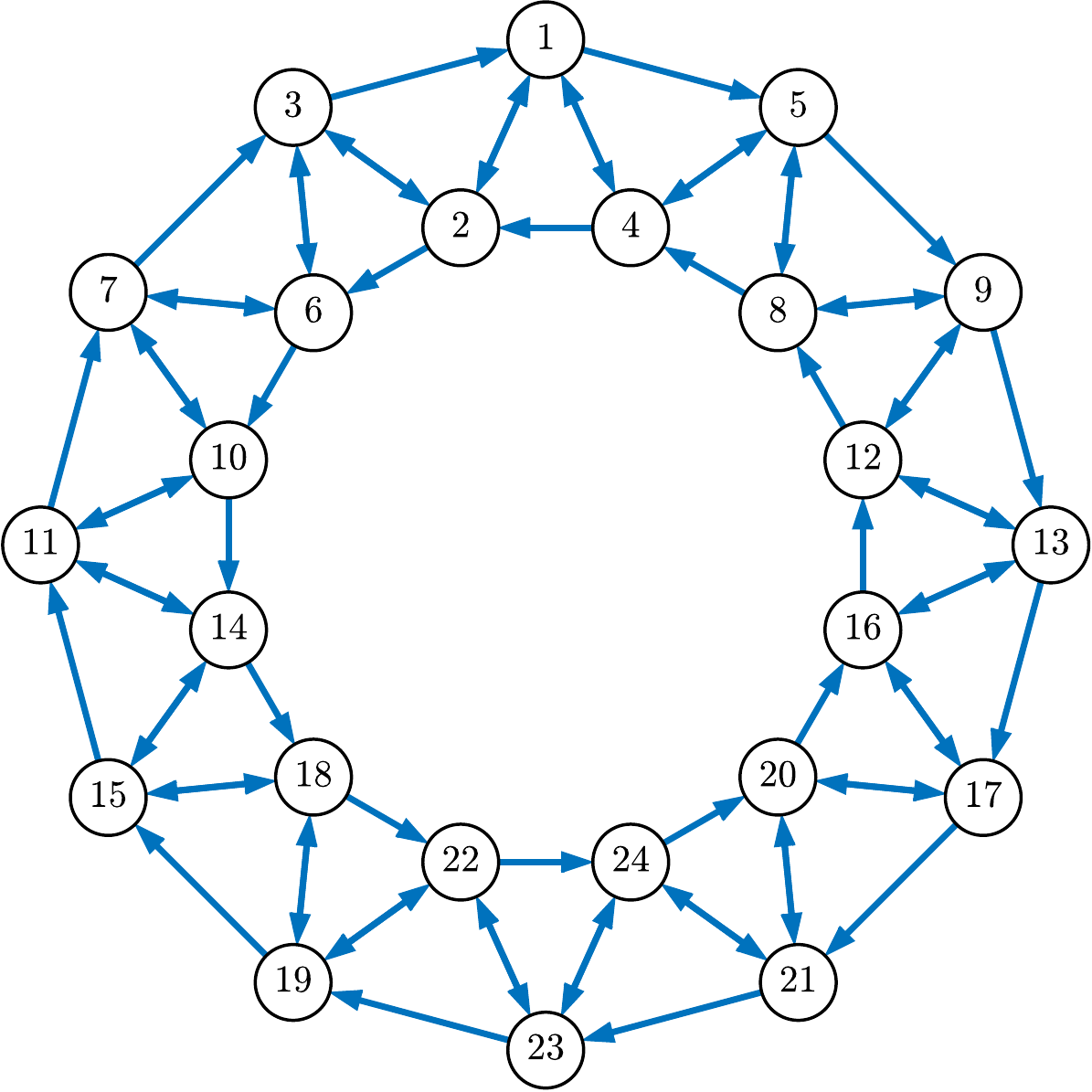}
\end{center}
The graph on the left shows the adjacency relations for $\calF_N(\theta)$, having an edge from $i$ to $j$ if $i \neq j$ and $[\calF_N(\theta)]_{ij} \neq 0$ (for a generic CMV matrix). The graph on the right shows the same scheme for $\widetilde{\calF}_N(\theta)$. From this perspective, the conclusion of Proposition~\ref{prop:bandwidth} is easy to check visually: one simply verifies that the indices of connected nodes can differ by no more than four in the adjacency graph for $\widetilde{\calF}_N(\theta)$.

The figures below show the corresponding nonzero pattern of the matrix $\calF_N(\theta)$ (left) and its reordered version $\widetilde{\calF}_N(\theta)$ (right).

\begin{center}
\includegraphics[width=2in]{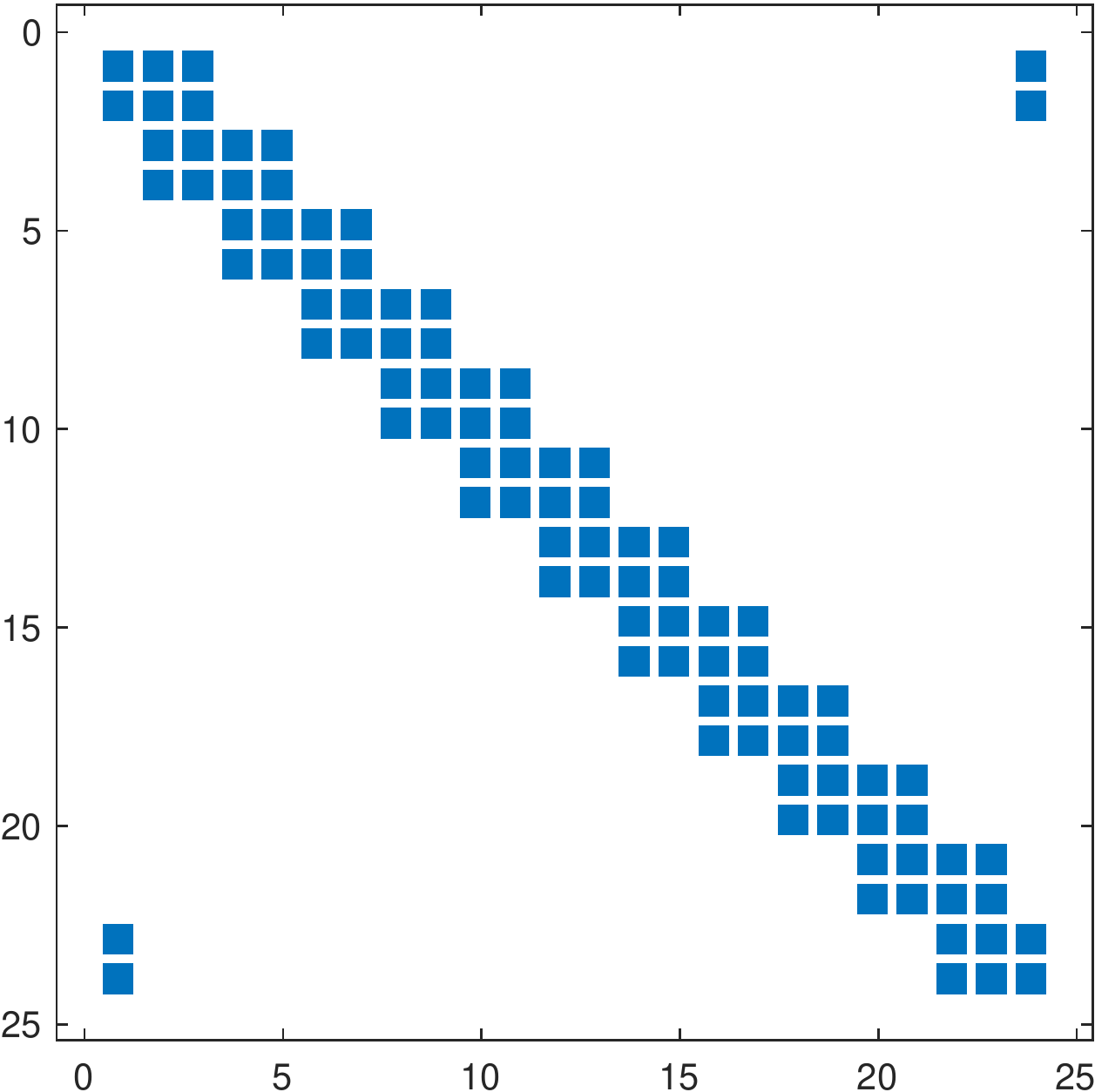}
\qquad\qquad
\includegraphics[width=2in]{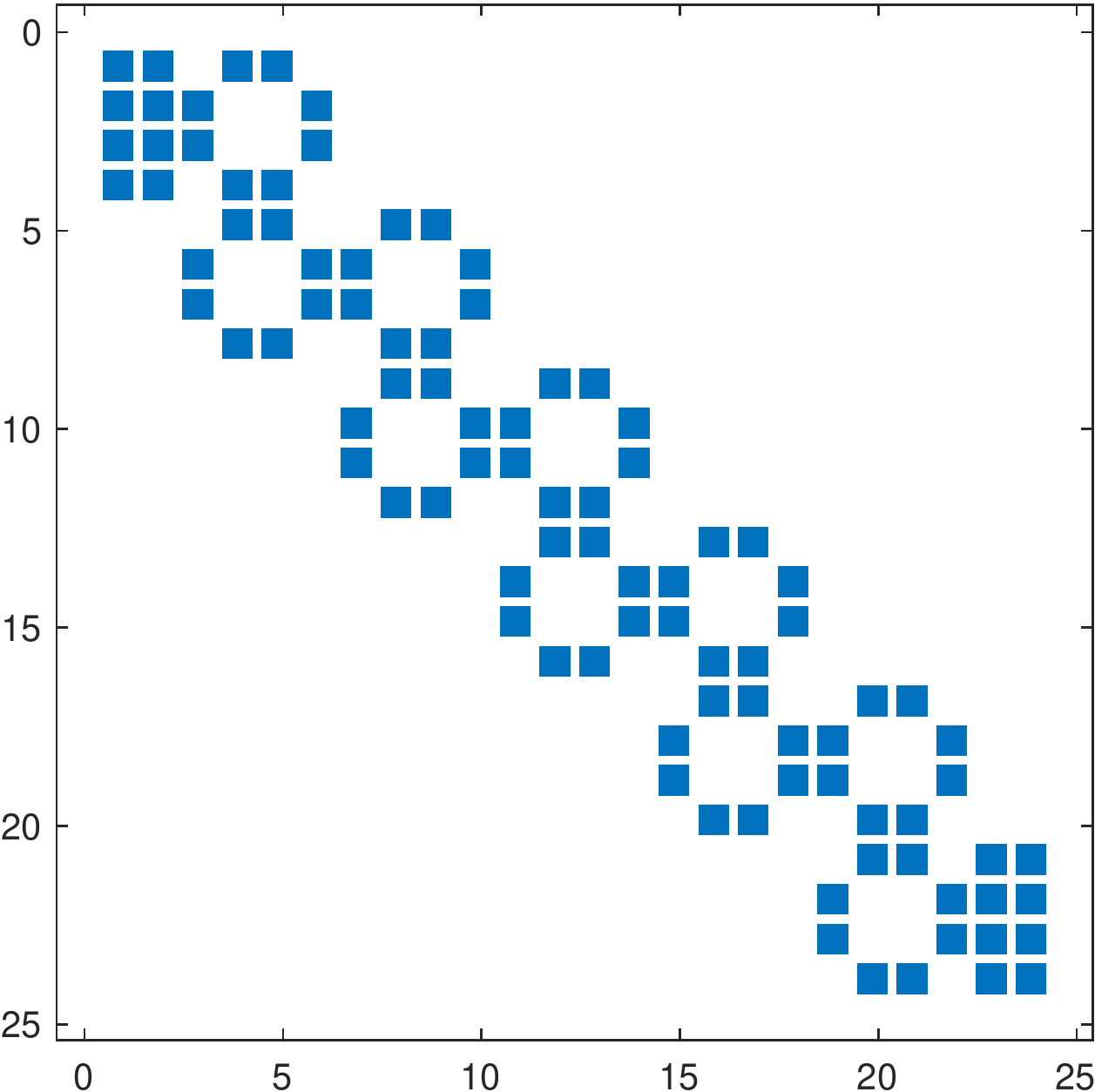}
\end{center}

Let us conclude with some plots of zeros and numerical approximations of the density of zeros. We begin with the Fibonacci case, which supplied the original motivation for the present work.

\begin{example}[Fibonacci]
Consider an alphabet $\calA = \{\sfa,\sfb\}$ with two letters, and let $\calA^*$ denote the free monoid over $\calA$ (that is, the set of finite words written with the letters in $\calA$). The \emph{Fibonacci substitution} is defined by $S(\sfa) = \sfa\sfb$ and $S(\sfb) = \sfa$, and extended to $\calA^*$ by concatenation. Thus, beginning with the seed $u_0=\sfa$, one forms the sequence $u_k = S^k(w_0)$:
\begin{align*}
u_1 & = \sfa\sfb \\
u_2 & = \sfa\sfb\sfa \\
u_3 & = \sfa\sfb\sfa\sfa\sfb \\
u_4 & = \sfa\sfb\sfa\sfa\sfb\sfa\sfb\sfa, 
\end{align*}
and so on. As one can see, the initial letters stabilize once they appear, so one can define the word $u_\infty = \lim_{k \to \infty} u_k = \sfa\sfb\sfa\sfa\sfb\sfa\sfb\sfa\ldots$, where the limit may be understood in the sense of the product topology on $\calA^\bbN$ (and in which $\calA$ has the discrete topology). 

One can specify a ferromagnetic Ising model by choosing $p_\sfa, p_\sfb>0$ and defining the sequence of normalized magnetic couplings via
\begin{equation}
p_n = \begin{cases} p_\sfa, & u_\infty(n)=\sfa;\\  p_\sfb, & u_\infty(n)=\sfb. \end{cases}
\end{equation}

There are several equivalent ways to imbed this example into an ergodic context. The \emph{Fibonacci subshift}, $\Omega_{\rm F}\subseteq \calA^\bbZ$ is the set of all sequences whose local structure coincides with that of $u_\infty$. More precisely, if $v =v_1\cdots v_\ell \in \calA^*$ and $u$ is a finite word or infinite sequence, we write $v\triangleleft u$ if for some $j$, $v = u(j)u(j+1) \cdots u(j+\ell-1)$ (and we say $v$ is a \emph{subword} of $u$). One then defines
\begin{equation}
\Omega_{\rm F} = \{ \omega = (\omega_n)_{n\in\bbZ} : \forall \ell \in \bbN, \ n \in \bbZ,  \omega_n \cdots \omega_{n+\ell-1} \triangleleft u_\infty\}.
\end{equation}
One can check that $\Omega_{\rm F}$ is a compact subset of $\calA^\bbZ$ that is invariant under the action of the shift $[T\omega]_n = \omega_{n+1}$. It is furthermore known that $(\Omega_{\rm F},T)$ enjoys a unique $T$-invariant measure $\mu$ satisfying $\supp\mu = \Omega_{\rm F}$.

For this system, the set of labels can be computed explicitly:
\begin{equation}
\frA(\Omega_{\rm F}, T, \mu) = \bbZ + \alpha \bbZ = \{n+m\alpha : n,m \in \bbZ\},
\end{equation}
where $\alpha = (\sqrt{5}-1)/2$ denotes the inverse of the golden mean; see, e.g., \cite{BelBovGhe1992, DFGap} for details.

Let us show some plots for this model. Following~\cite{BaaGriPis1995JSP}, we take $p_\sfa = 2/3$ and $p_\sfb = 1/100$. First, we show the zeros of the partition function for the Ising model corresponding to $n=10$ and $n=17$ iterations of the Fibonacci substitution.  
(The $n=10$ plot replicates the analogous plot in~\cite[Fig.~1]{BaaGriPis1995JSP}.)
\begin{center}
\includegraphics[scale=.45]{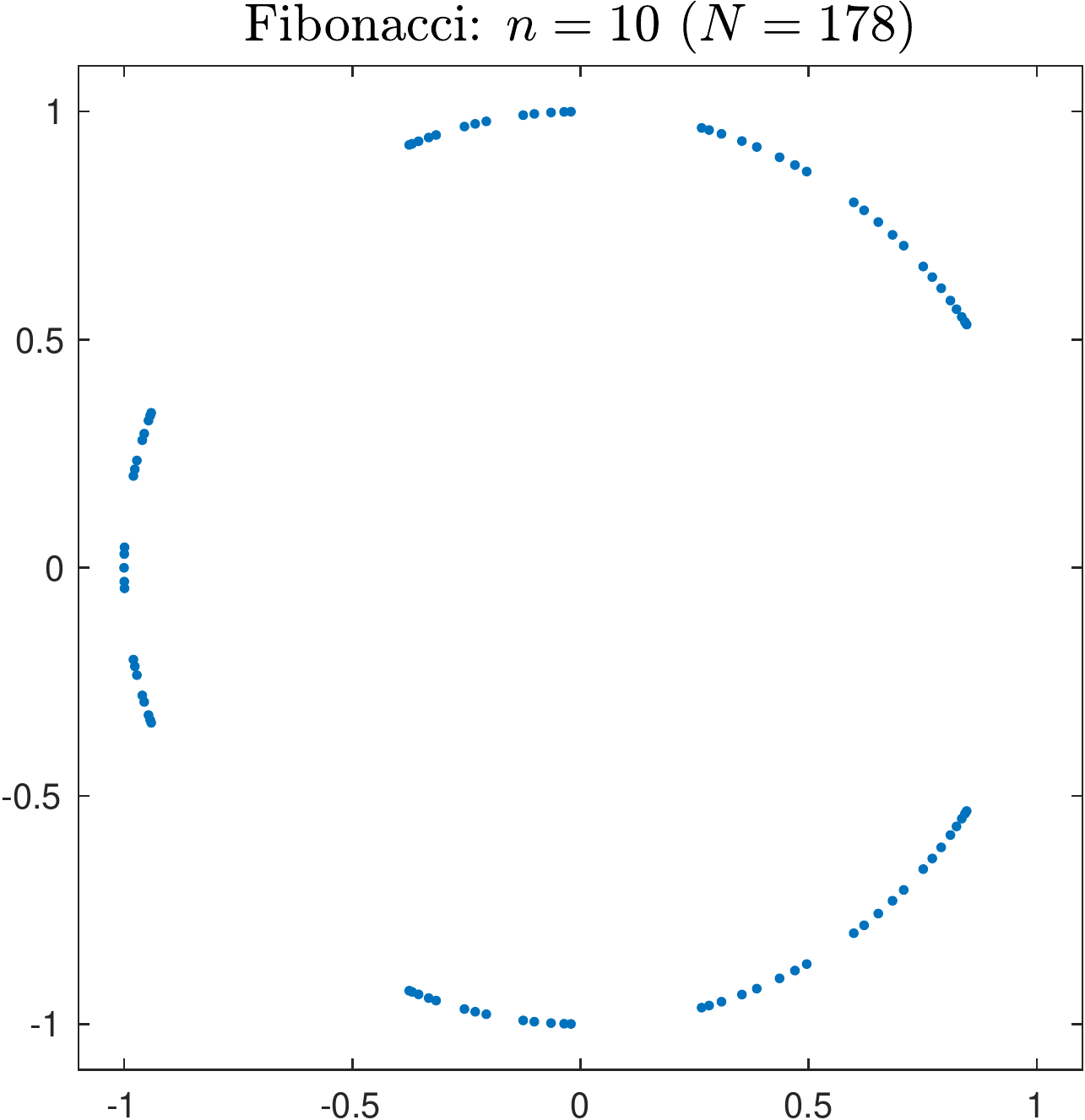}
\qquad
\includegraphics[scale=.45]{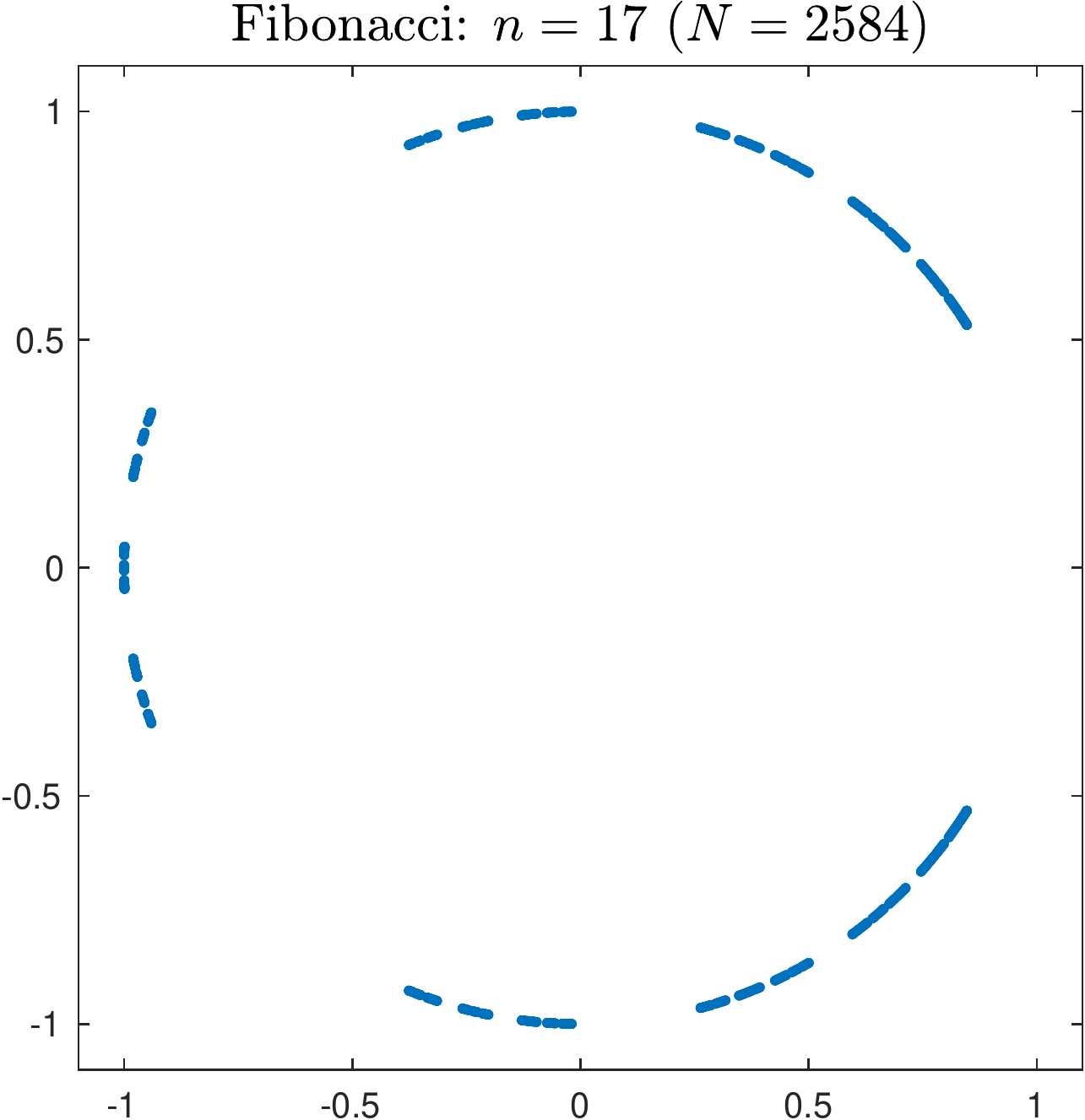}
\end{center}
Next, let us inspect the corresponding IDS, plotted as  function of $\theta = -i\log z/2\pi$, that is, we compute $\kappa([1,e^{2\pi i \theta}])$ for $\theta \in [0,1]$.
(The IDS for $n=10$ is shown in~\cite[Fig.~2]{BaaGriPis1995JSP}.) 
The fractal nature of the distribution of the zeros becomes more apparent from this perspective, and indeed can be seen to be a consequence of the Cantor structure of the spectrum of the CMV operator having coefficients generated by the Fibonacci sequence \cite{DamMunYes2013JSP}. It is known that the density of states measure assigns no weight to gaps of the spectrum, so each flat portion in the graph of the IDS corresponds to a gap in the spectrum. The height of the graph of the IDS in the gap then corresponds to the gap label.
\begin{center}
\includegraphics[scale=.45]{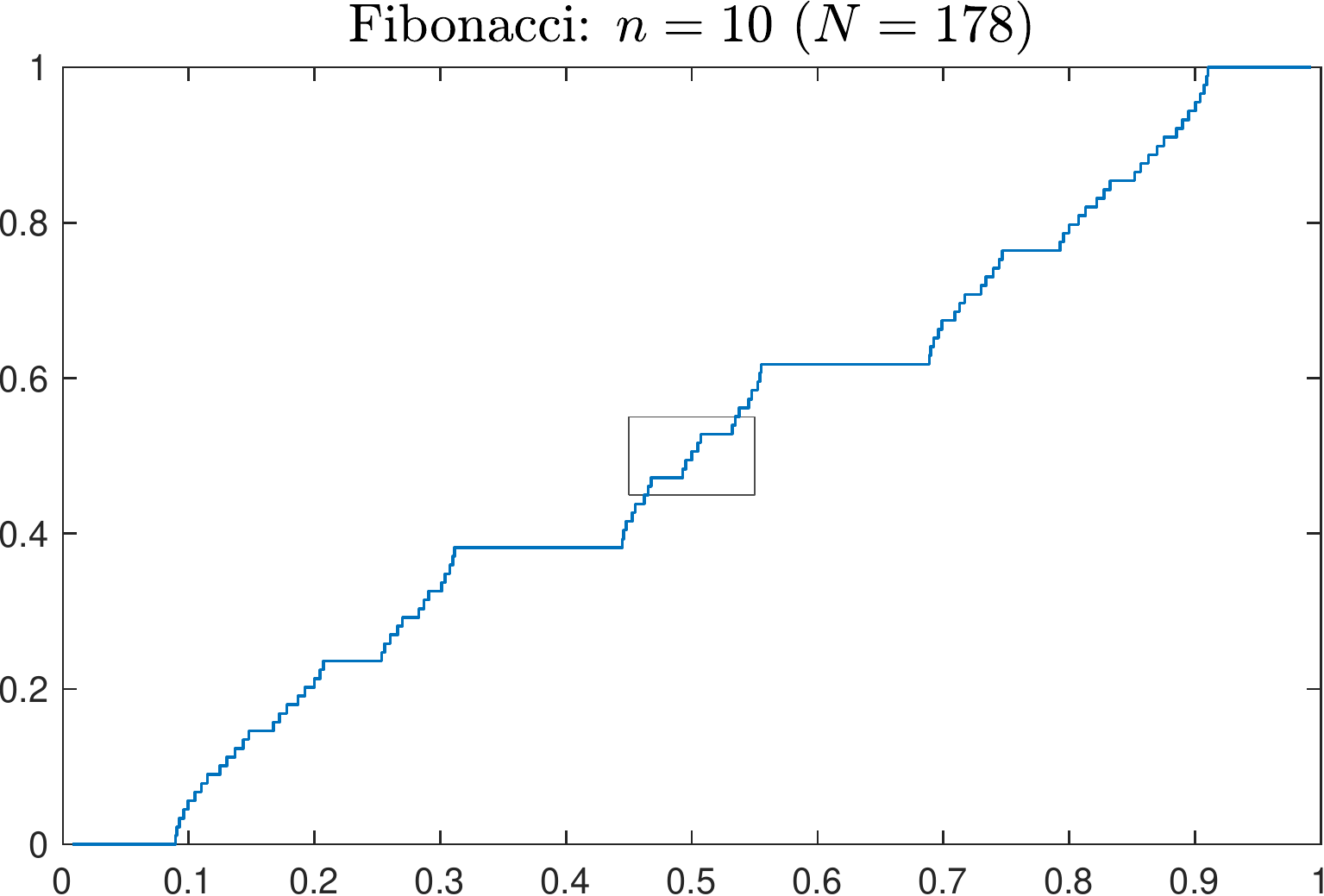}
\begin{picture}(0,0)
  \put(-184.75,72){\includegraphics[scale=0.18]{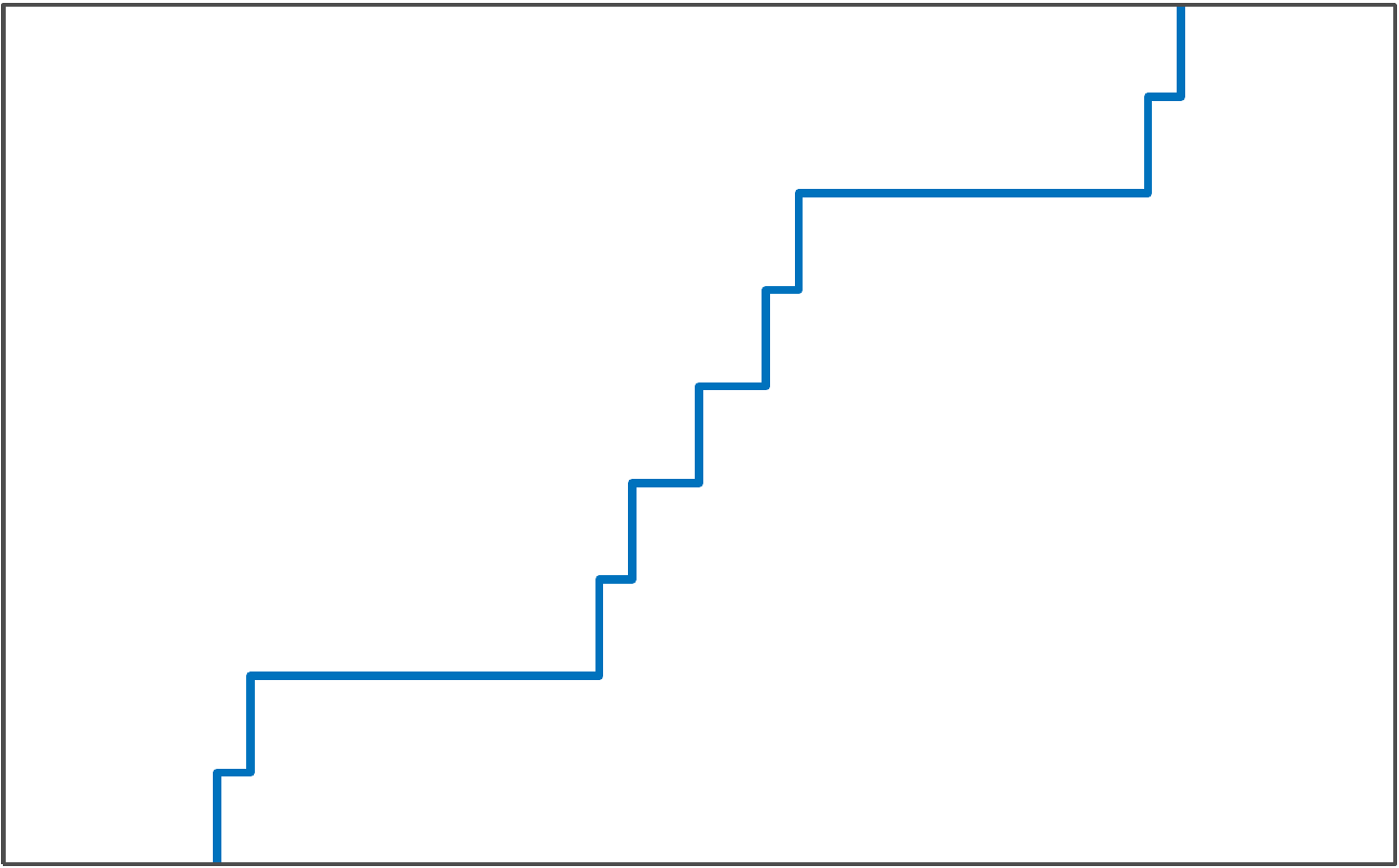}} \end{picture}
\qquad
\includegraphics[scale=.45]{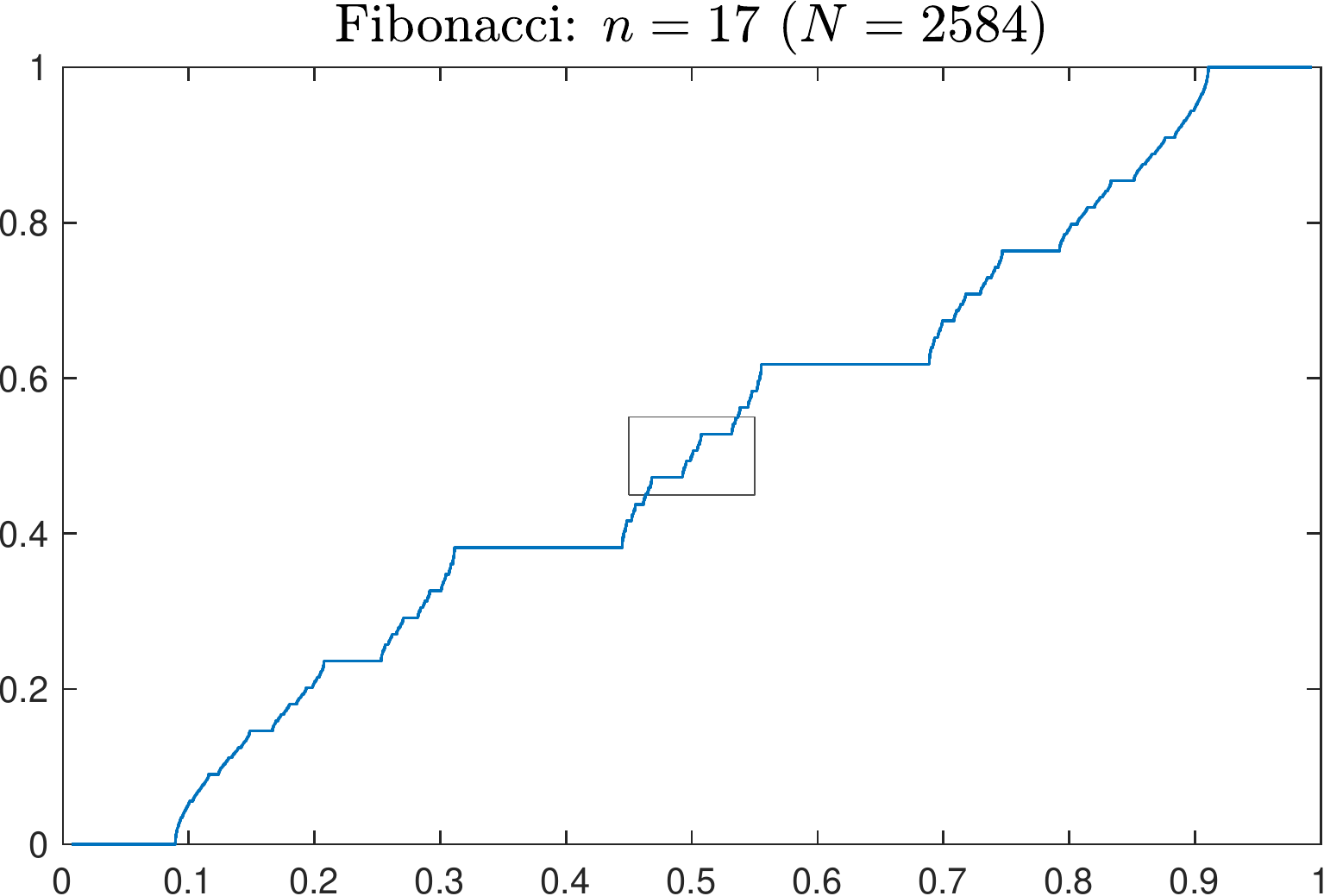}
\begin{picture}(0,0)
  \put(-184.75,71.75){\includegraphics[scale=0.18]{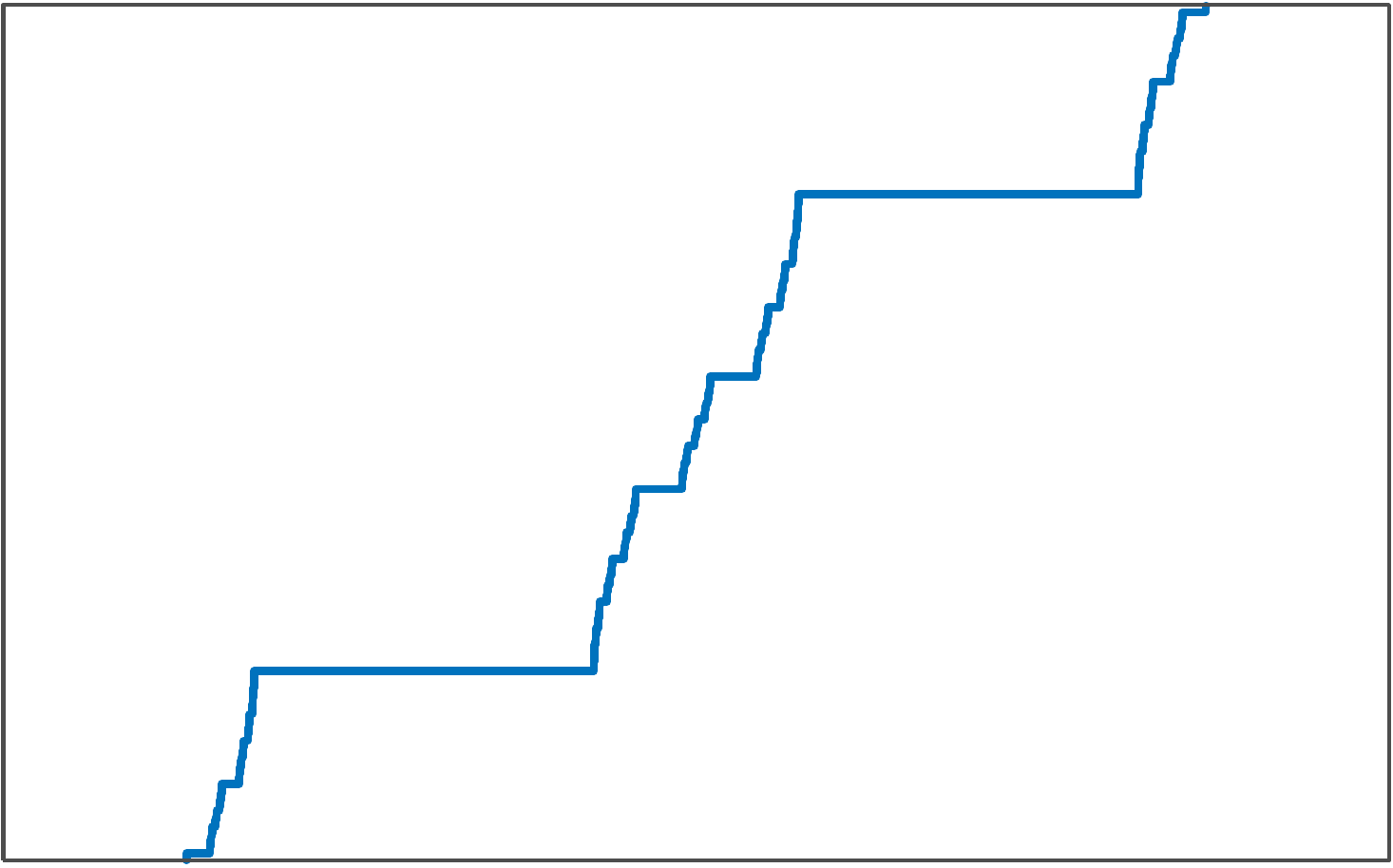}}
\end{picture}
\end{center}

To get another perspective on the structure of gaps, let us look at the distribution of gap lengths. The histograms below show the proportion of gaps between successive zeros of various lengths.
\begin{center}
\includegraphics[scale=.45]{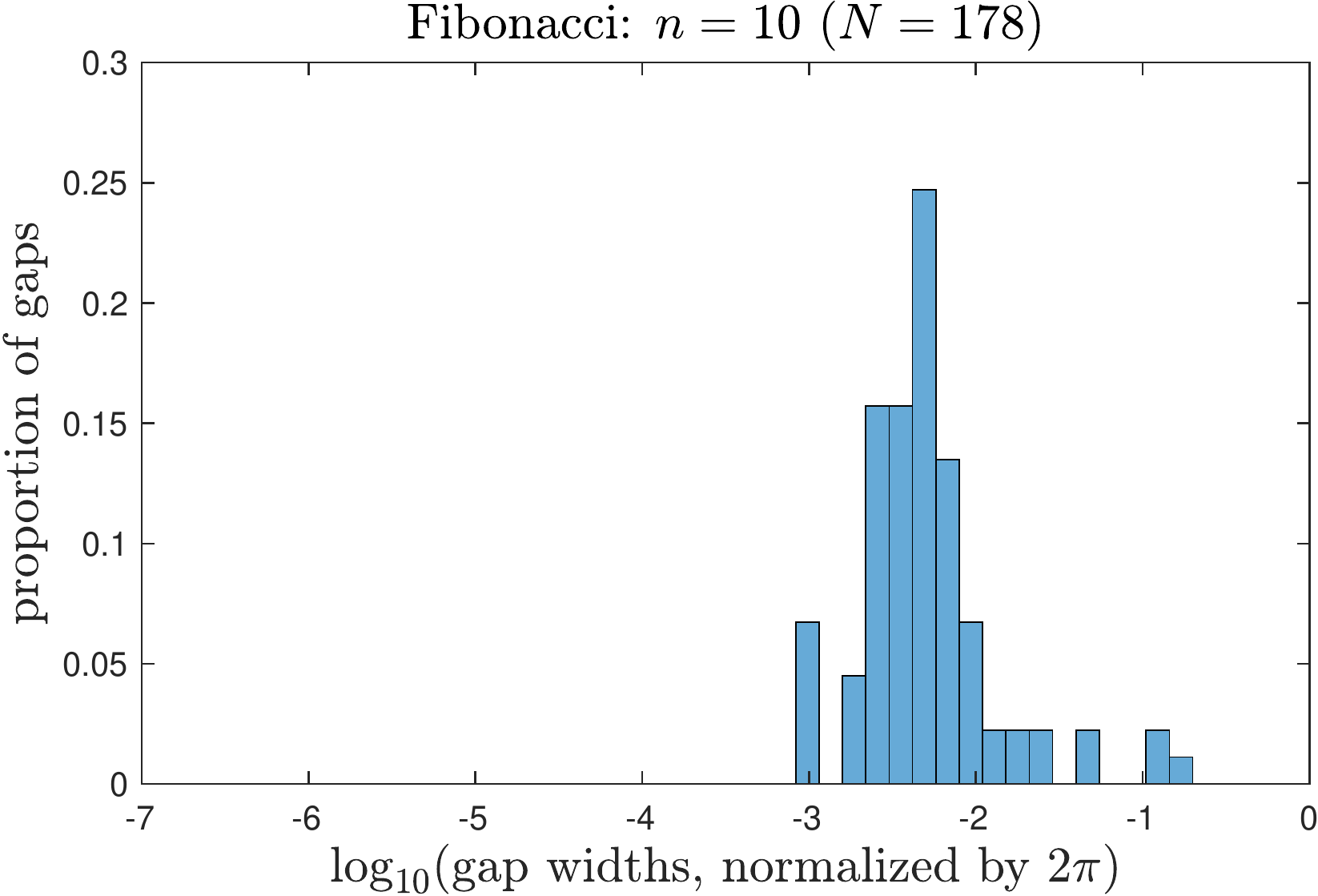}
\quad
\includegraphics[scale=.45]{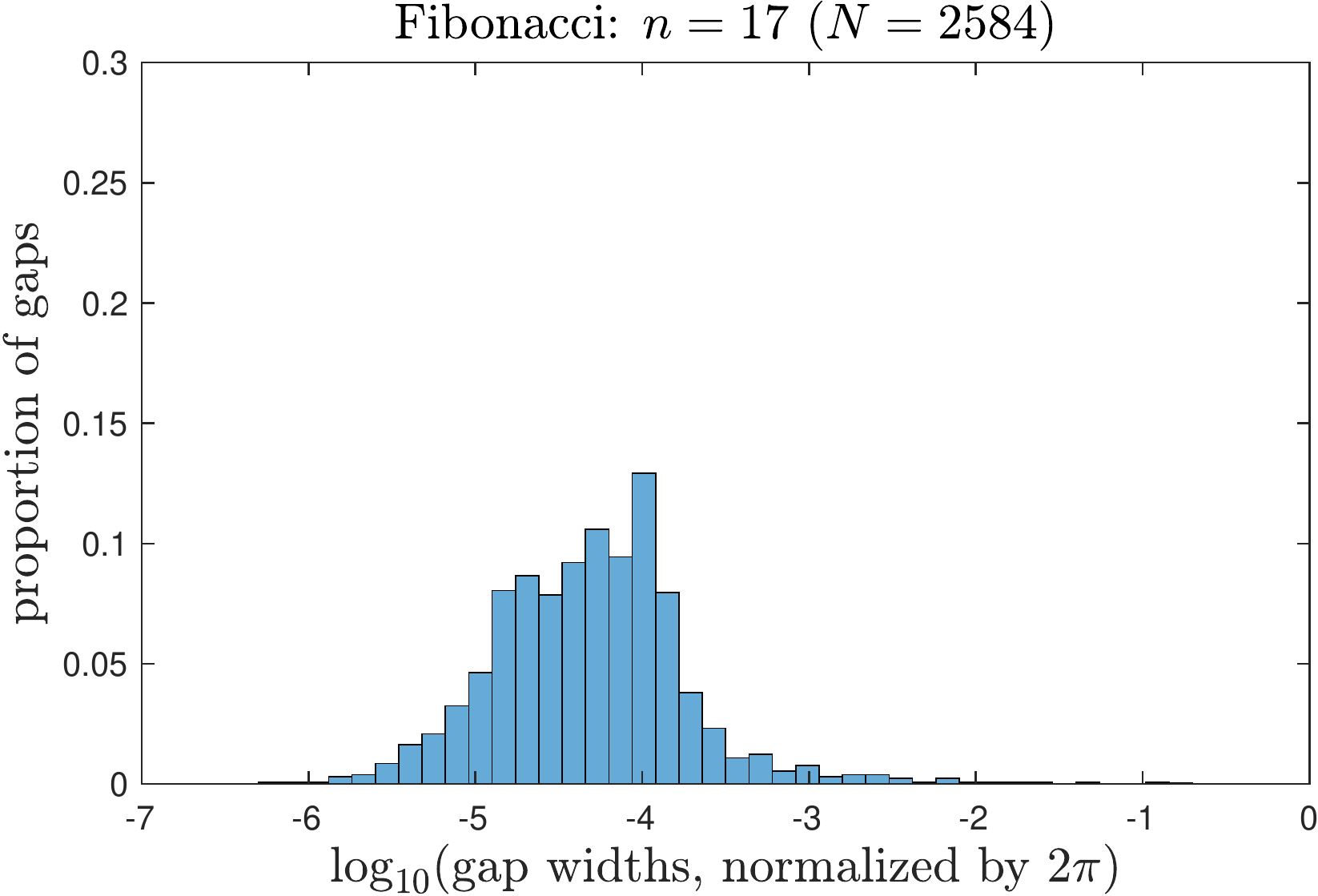}
\end{center}

\end{example}

\begin{example}[General Subshifts]
The previous example is a special case of a general type of dynamical system, called a subshift.
To formulate the general setting, consider a finite set $\calA$ (the \emph{alphabet}) with the discrete topology, and $\bbX = \calA^\bbZ$ with the product topology. This topology makes $\bbX$ a compact metrizable space with, e.g., 
\begin{equation}
d(\omega,\omega') = 2^{-\min\{|n|: \omega_n \neq \omega'_n\}}, \quad \omega \neq \omega'
\end{equation}
an example of a metric giving the topology on $\bbX$.\ \ The \emph{shift} on $\bbX$ is given by $[Tx]_n = x_{n+1}$ for $x \in \bbX$.\ \ A \emph{subshift} is any $T$-invariant compact subset of $\bbX$.\ \ If $\Omega \subseteq \bbX$ is a subshift, one abuses notation and writes $T$ for the restriction of the shift to $\Omega$.\ \ If $\mu$ is a $T$-ergodic measure on $\Omega$, it is known that
\begin{equation}
\frA(\Omega,T,\mu) = \set{\int f \, d\mu : f \in C(\Omega,\bbZ)}.
\end{equation}
Equivalently, $\frA(\Omega,T,\mu)$ can be characterized by measures of cylinder sets. More precisely, given a word $u \in \calA^*$, the associated cylinder set is
\[\Xi_u = \{\omega \in \Omega : \omega_j = u_j, \ \forall 1 \le j \le n\}.\]
Then $\frA(\Omega,T,\mu)$ is precisely the group generated by $\{\mu(\Xi_u) : u \in \calA^*\}$.

One particularly interesting class of subshifts is supplied by so-called subshifts of finite-type. Given a matrix $M \in \bbR^{\calA \times \calA}$ such that $M_{\sfa,\sfb} \in \{0,1\}$ for every $\sfa, \sfb \in\calA$, the associated \emph{subshift of finite type} is given by
\begin{equation}
\Omega_M = \{\omega \in \calA^\bbZ : M_{\omega_n,\omega_{n+1}}=1 \ \forall n \in \bbZ\}.
\end{equation}
One also assumes that $M$ is \emph{primitive} in the sense that for some $n \in \bbN$, $(M^n)_{\sfa,\sfb}>0$ for all $\sfa,\sfb \in \calA$.

There is a plethora of invariant measures on $\Omega_M$. Suppose $P \in \bbR^{\calA \times \calA}$ is such that every entry of $P$ is nonnegative, $P_{\sfa, \sfb} >0 \iff M_{\sfa,\sfb} = 1$, and 
\[\sum_{\sfb \in \calA} P_{\sfa,\sfb}=1, \quad \forall \sfa \in \calA.\]
By the primivity assumption on $M$ and the Perron--Frobenius theorem, there is a unique invariant probability vector, that is, a vector $(p_\sfa)_{\sfa \in \calA}$ such that
\begin{equation}
\sum_{\sfa \in \calA}p_\sfa P_{\sfa,\sfb} = p_\sfb, \quad \forall \sfb \in \calA.
\end{equation}
The induced measure $\mu$ on $\Omega_M$ is given by
\begin{equation} \label{eq:muXiu}
\mu(\Xi_u) = p_{u_1}\prod_{j=1}^{n-1}P_{u_j,u_{j+1}}.
\end{equation}
In view of the previous discussion, one can use this to compute $\frA(\Omega,T,\mu)$ in terms of the entries of $P$ and $p$, namely, $\frA(\Omega,T,\mu)$ is the $\bbZ$-module generated by the numbers in \eqref{eq:muXiu}.
\end{example}

\begin{example}[Cat Map]
We begin with the base space $\Omega = \bbT^2 := \bbR^2/\bbZ^2$. The \emph{cat map} is the  transformation $T = T_{\rm cat}:\Omega \to \Omega$ given by
\begin{equation}
T_{\rm cat}(x,y) = (2x+y,x+y), \quad (x,y) \in \bbT^2.
\end{equation}
This example is known to have many invariant measures. One can check that $\mu = \mathrm{Leb}$, the normalized Lebesgue measure on $\bbT^2$, is $T_{\rm cat}$-ergodic. It was shown in \cite{DFGap} that
\begin{equation}
\frA(\Omega,T_{\rm cat}, \mu) = \bbZ.
\end{equation}

For the sampling function, we take $f(x,y) = 1/2 + \cos(2\pi y)/3$. The corresponding Verblunsky coefficients are
\begin{equation}
\alpha_n(x,y)= f(T^n(x,y)), \quad (x,y) \in \bbT^2.
\end{equation}
In view of the relationship we have discussed previously, this corresponds to an Ising model with couplings
\[p_n = p_n(x,y) = - \frac{1}{2}\log f(T^n(x,y)). \]

By induction, one can check that
\[T_{\rm cat}^n(x,y) = (F_{2n+1}x+F_{2n}y, F_{2n}x+F_{2n-1}y),\]
where $F_n$ denotes the $n$th Fibonacci number, normalized by $F_0=0$, $F_1=1$, and $F_{n+1}= F_n + F_{n-1}$ for $n \geq 1$. 

For the cat map, we produce plots similar to those from previous examples. 
We note that the numerical calculations in the illustrations that follow require
more care than might first be apparent.  The rapid growth of the entries in $T_{\rm cat}^n(x,y)$ 
means that, in standard double-precision floating point arithmetic~\cite{Ove01}, 
the argument $y$ in $\cos(2\pi y)$ in the calculation of $\alpha_n(x,y)$ lacks
sufficient precision for the Verblunsky coefficients to be computed accurately.
Indeed, double precision calculations produce $\alpha_n(x,y)$ with errors of $O(1)$ when $n\ge 40$, 
rendering subsequent numerically computed eigenvalue statistics essentially meaningless.
To avoid this pitfall, we compute these coefficients using high-precision arithmetic in Mathematica,
then render them in full double-precision accuracy for the subsequent eigenvalue calculation.
(We use MATLAB's standard dense nonsymmetric eigensolver {\tt eig} to compute the 
eigenvalues of the unitary matrix $\calF_N(\pi/2)$.)

We start with the zeros, generated with $x=1/\sqrt{2}$ and $y = 1/\sqrt{3}$.
In the plots below, the red arc $[e^{-i \phi}, e^{i \phi}]$ 
for $\phi = 2 \sin^{-1}(1/6)$ denotes an inner bound on the spectral gap proved in \cite{DFLY2015IMRN}.

\begin{center}
\includegraphics[scale=.45]{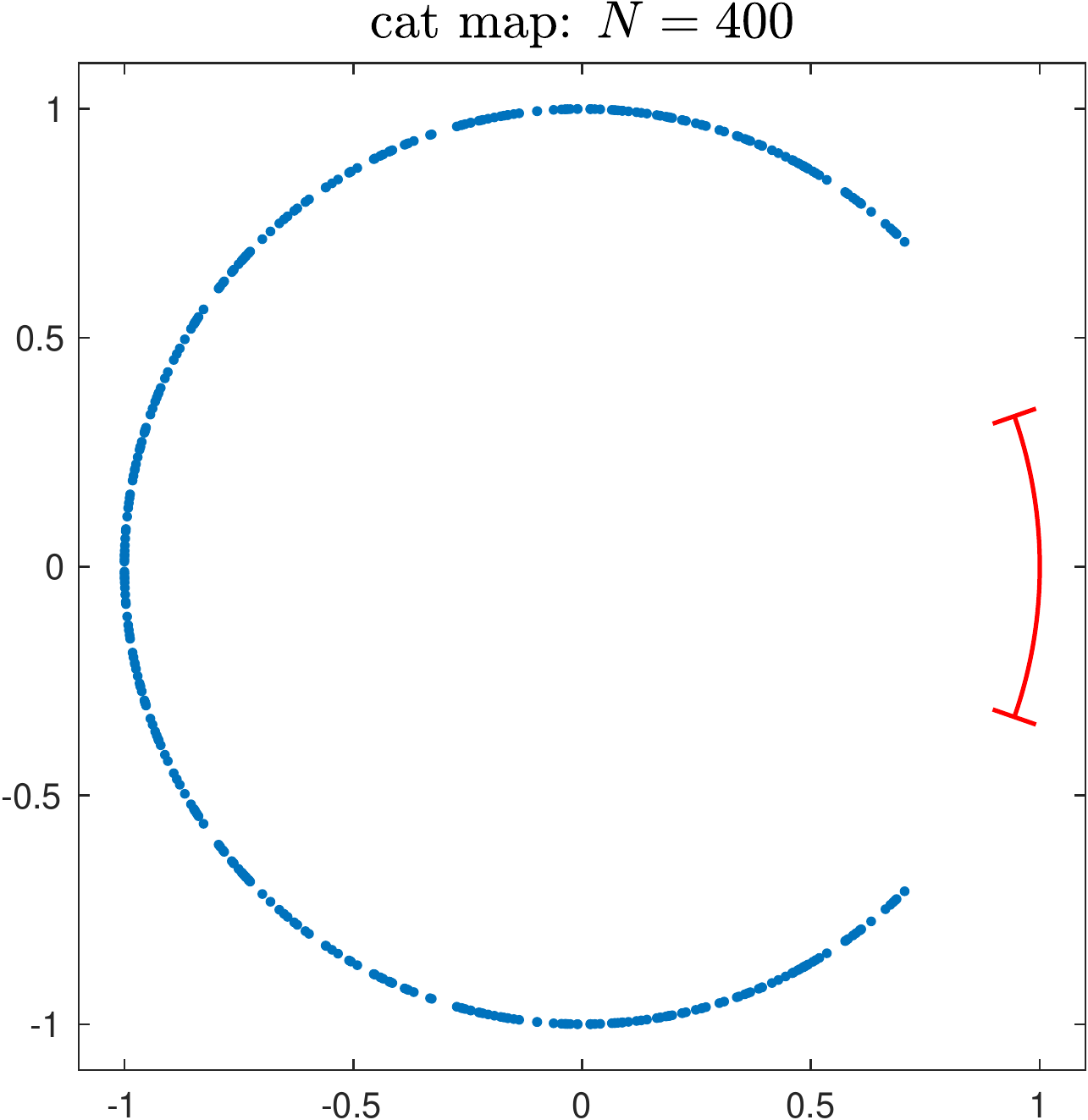}
\quad
\includegraphics[scale=.45]{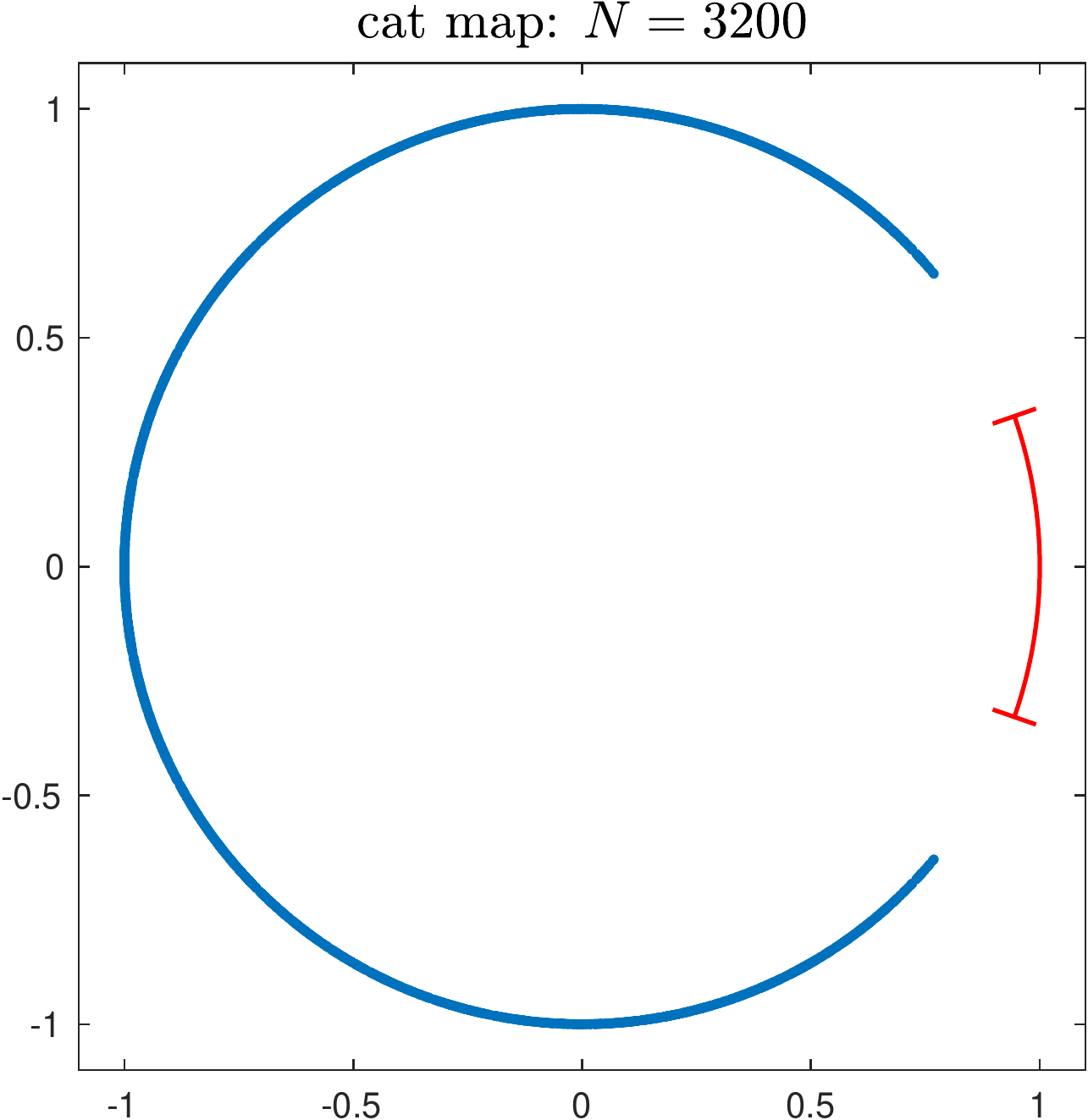}
\end{center}

As the reader can see, the zeros densely fill the circle, which is expected given results obtained rigorously from the gap labelling theorem. Below, we show the IDS as a function of $\theta = -i \log z / 2\pi$ as before.
\begin{center}
\includegraphics[scale=.45]{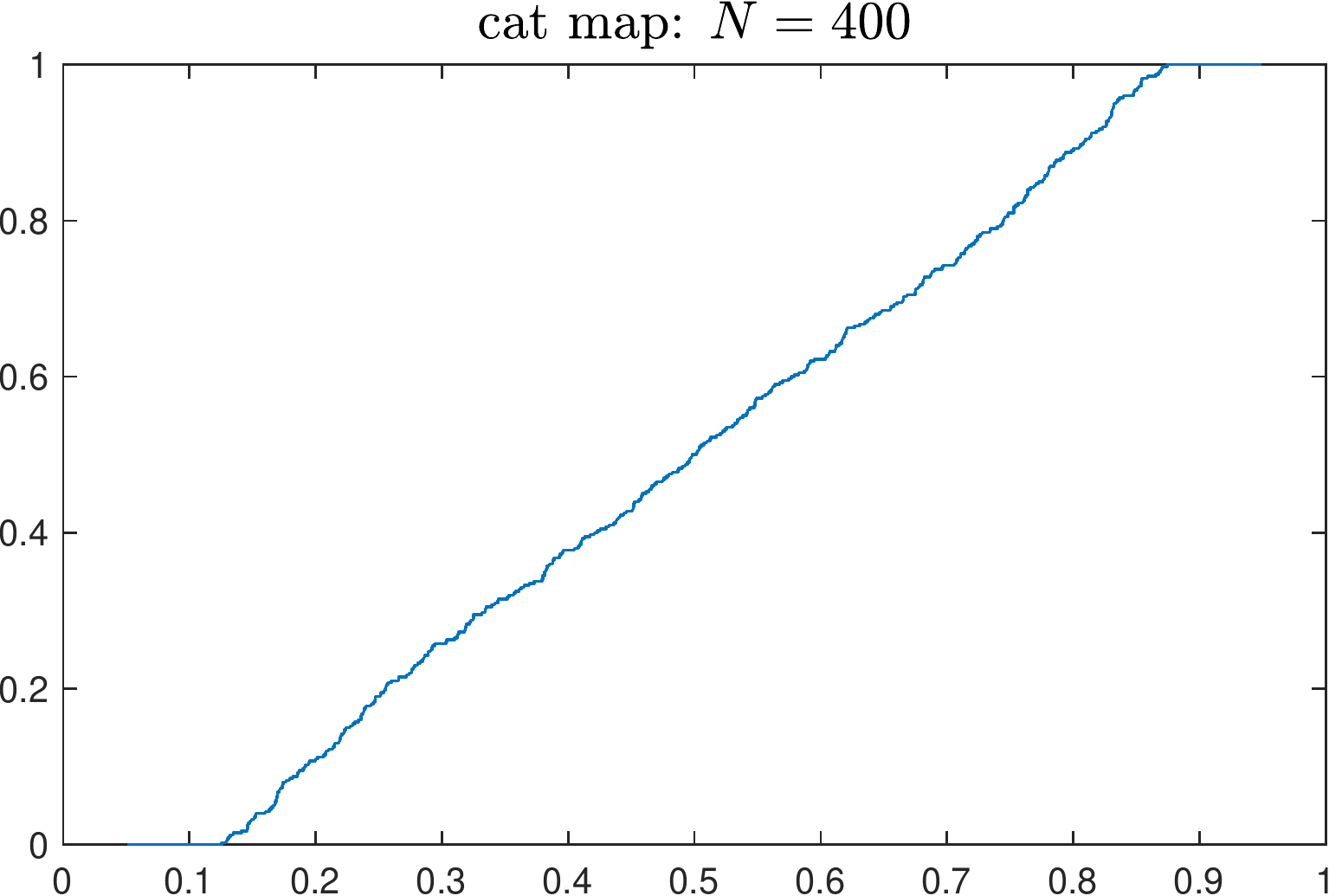}
\quad
\includegraphics[scale=.45]{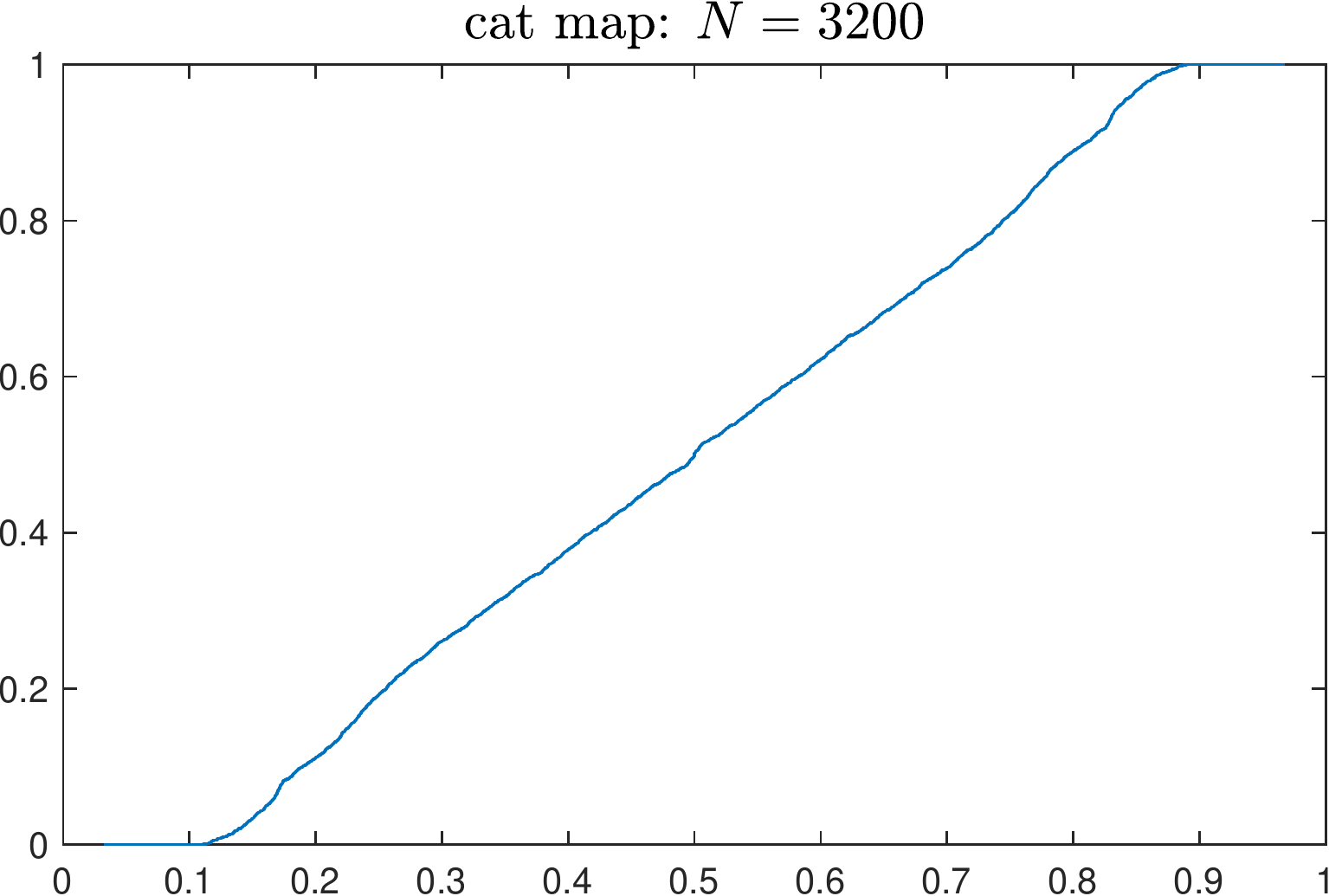}
\end{center}
As one can see, the graph comports with the general picture of an absence of gaps, since it appears (as it must) that the IDS is everywhere increasing on a suitable arc.

Finally, we conclude with the distribution of gap lengths. 
\begin{center}
\includegraphics[scale=.45]{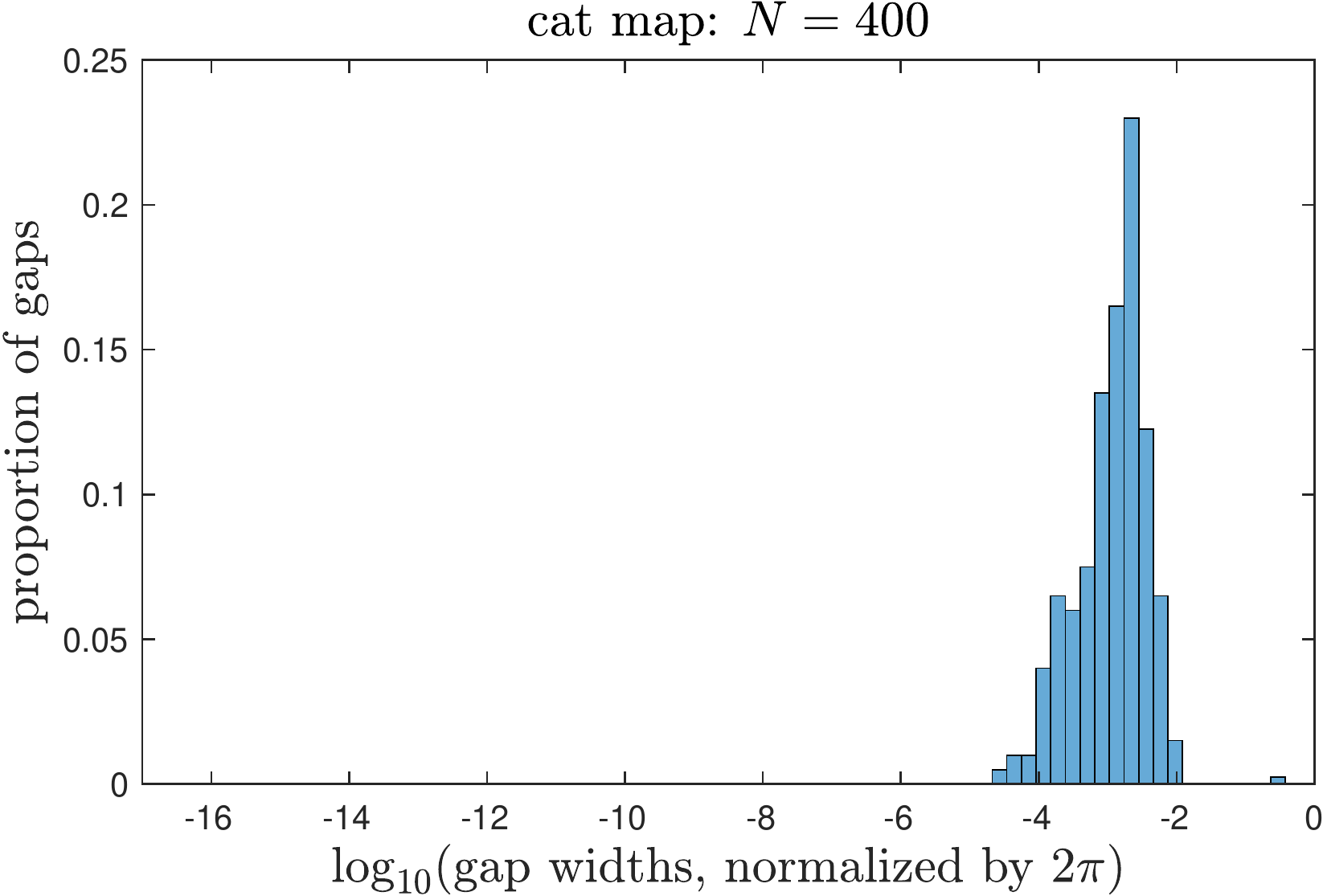}
\quad
\includegraphics[scale=.45]{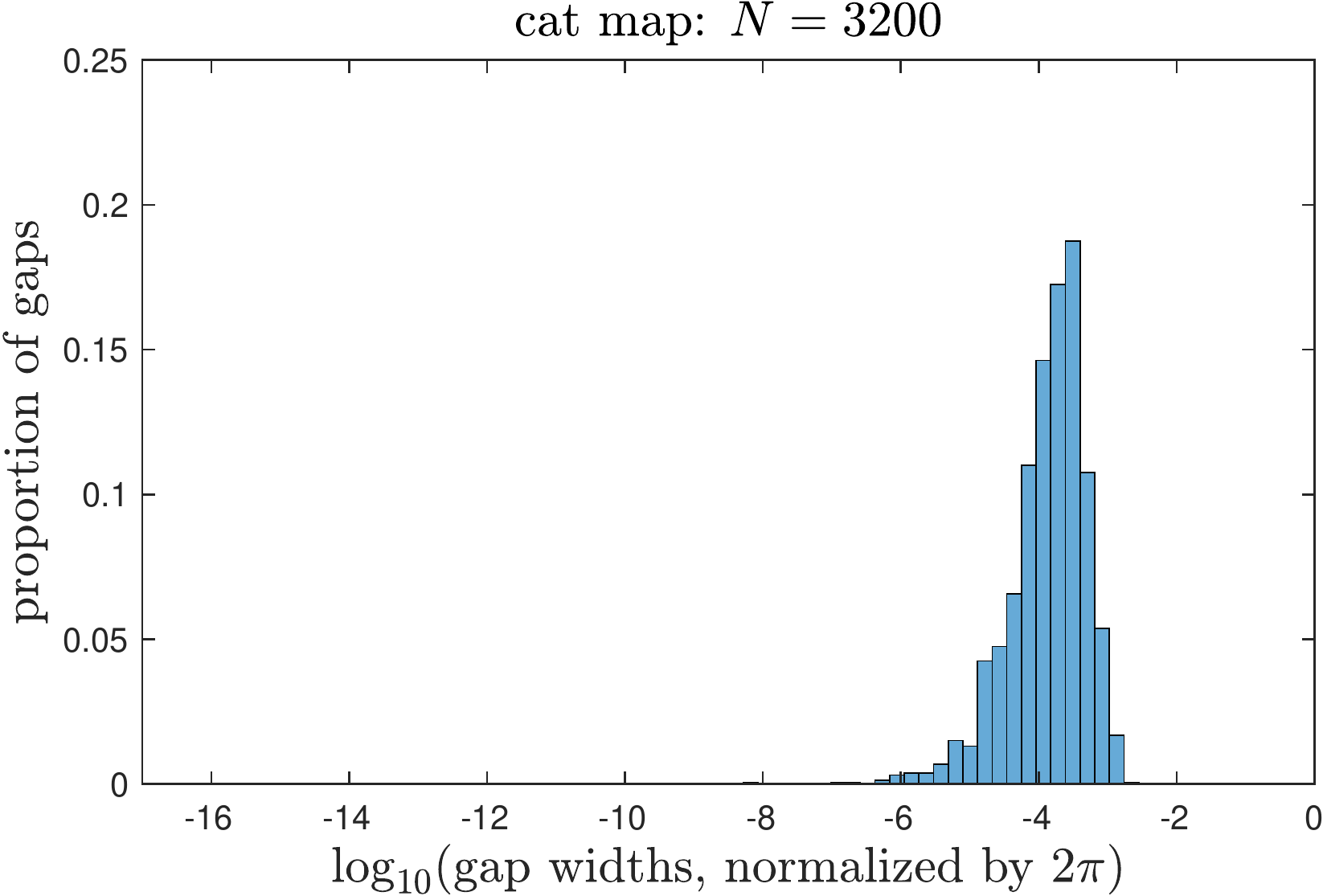}
\end{center}
Here, one observes something rather curious: the lengths of the gaps seem to be more uniform than one might expect. Concretely, from existing work on Schr\"odinger operators \cite{BourgainSchlag2000CMP}, one would expect CMV matrices with coefficients generated by the cat map to exhibit Anderson localization, that is, pure point spectrum with exponentially decaying eigenfunctions. Then, based on the same reasoning that one pursues for Schr\"odinger operators, one would expect the distribution of eigenvalues to exhibit less repulsion.

Let us note that the cat map has a natural Markov partition and hence is a factor of a subshift of finite type in a natural way. Concretely, taking $\calA = \{1,2,3,4,5\}$ and 
\begin{equation}
M_{\rm cat} = \begin{bmatrix} 1 & 0 & 1 & 1 & 0 \\  1 & 0 & 1 & 1 & 0 \\ 1 & 0 & 1 & 1 & 0 \\ 0 & 1 & 0 & 0 & 1 \\ 0 & 1 & 0 & 0 & 1. \end{bmatrix},
\end{equation}
there is a continuous factor map $\Phi:\Omega_M \to \bbT^2$ such that $\Phi \circ T = T_{\rm cat}\circ \Phi$, where $T$ denotes the shift on $\Omega_M$. 
See Brin--Stuck~\cite[pp.~135--137]{BrinStuck2015Book} or Katok--Hasselblatt~\cite[Section~20]{KatokHassel1995Book} for more discussion and details.

What is notable about this example is the disparity between the label sets. For the subshift $\Omega_M$ with invariant measure $\mu$, the label set is a dense subgroup of $\bbR$. However, the label set for the cat map itself is $\bbZ$, which leads to the conclusion that the almost-sure spectrum associated with a cat map model is connected. 
\end{example}

\begin{example}[Skew Shift]
The base space is $\bbT^2 = \bbR^2/\bbZ^2$ and the transformation $T = T_{\rm ss}$ is given by
\begin{equation}
T_{\rm ss}(x,y) = (x+\gamma,x+y)
\end{equation}
where $\gamma \in \bbR$ is a fixed irrational number. For this example, the label set can be shown to be
\begin{equation} \bbZ + \gamma \bbZ = \{n+m\gamma: n,m \in \bbZ\}, \end{equation}
which is a dense subgroup of $\bbR$.

For the sampling function, we take $f(x,y) = 1/2+ \cos(2\pi y)/3$ as before. The Verblunsky coefficients are
\begin{equation}
\alpha_n(x,y)= f(T^n(x,y)), \quad (x,y) \in \bbT^2.
\end{equation}
As before, one can use induction to write $T_{\rm ss}^n$ explicitly for $n \in \bbN$ as
\begin{equation}
T_{\rm ss}^n(x,y)= \left(x+n\gamma, y+nx+\frac{n(n-1)}{2}\gamma\right).
\end{equation}
Taking $(x,y) = (\gamma/2,0)$ for the starting point leads to
\begin{equation}
\alpha_n(\gamma/2,0) = 1/2+ \cos(n^2 \pi  \gamma)/3, \quad n \in \bbN.
\end{equation}

As in the case of the cat map, we show the zeros for truncations of order $N=400$ and $N=3200$ for the case $\gamma = 1/\sqrt{2}$, together with a red arc showing an inner bound on the main spectral gap about $z=1$ (indeed the same inner bound as in the cat map example). As before, we also show the integrated density of states and a histogram showing the distribution of gap lengths.

\begin{center}
\includegraphics[scale=.45]{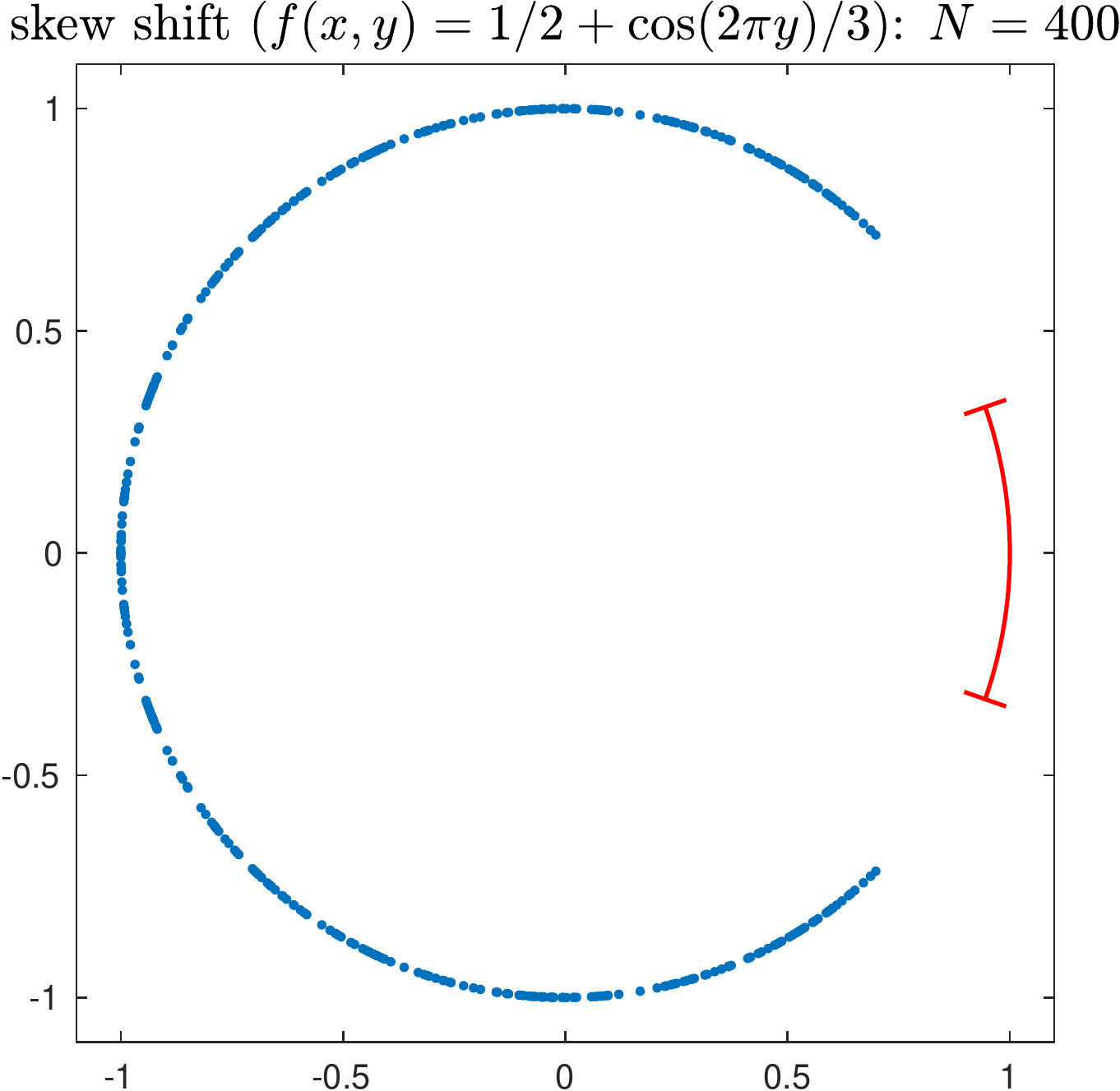}
\quad
\includegraphics[scale=.45]{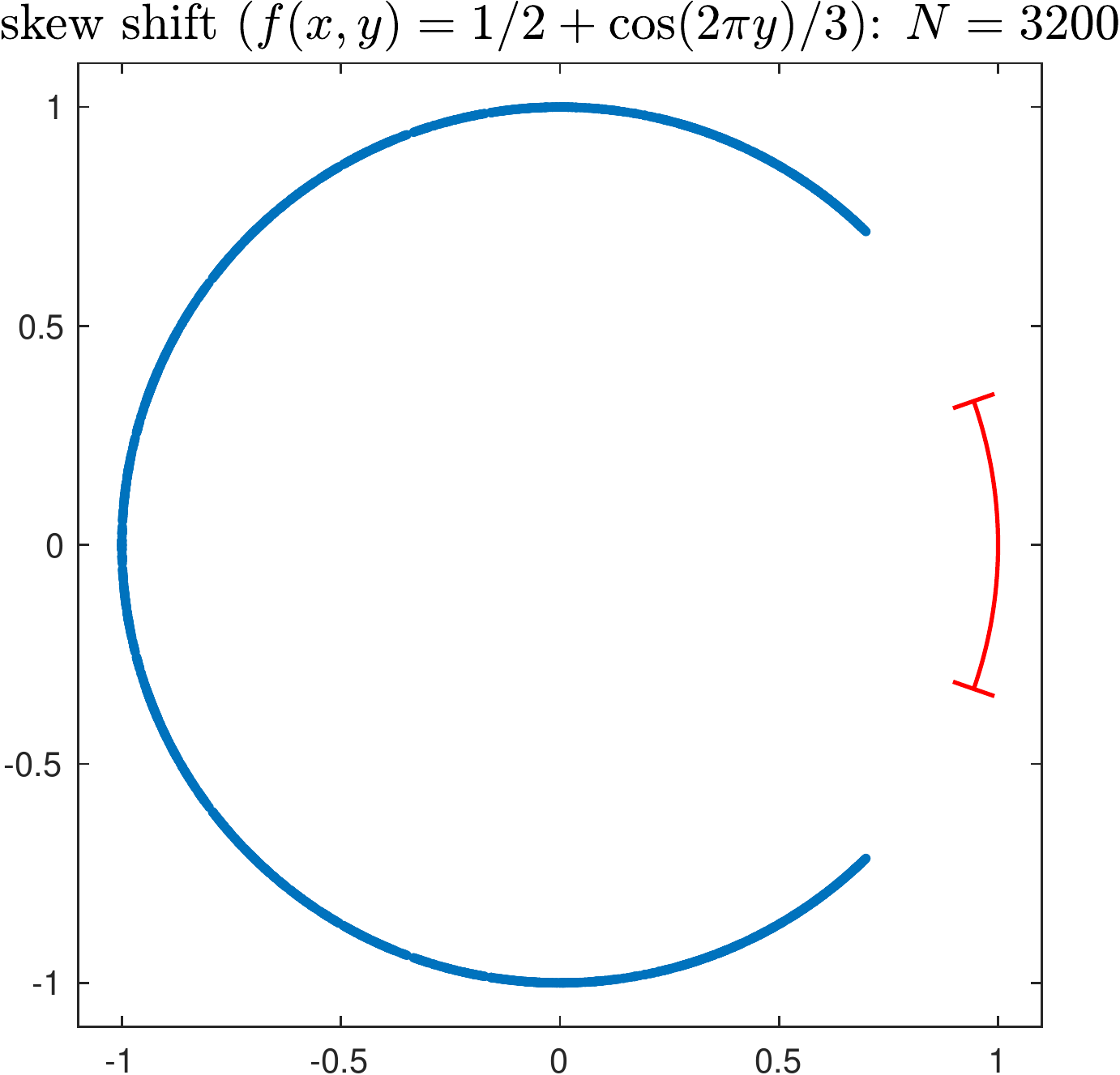}
\end{center}

\begin{center}
\includegraphics[scale=.45]{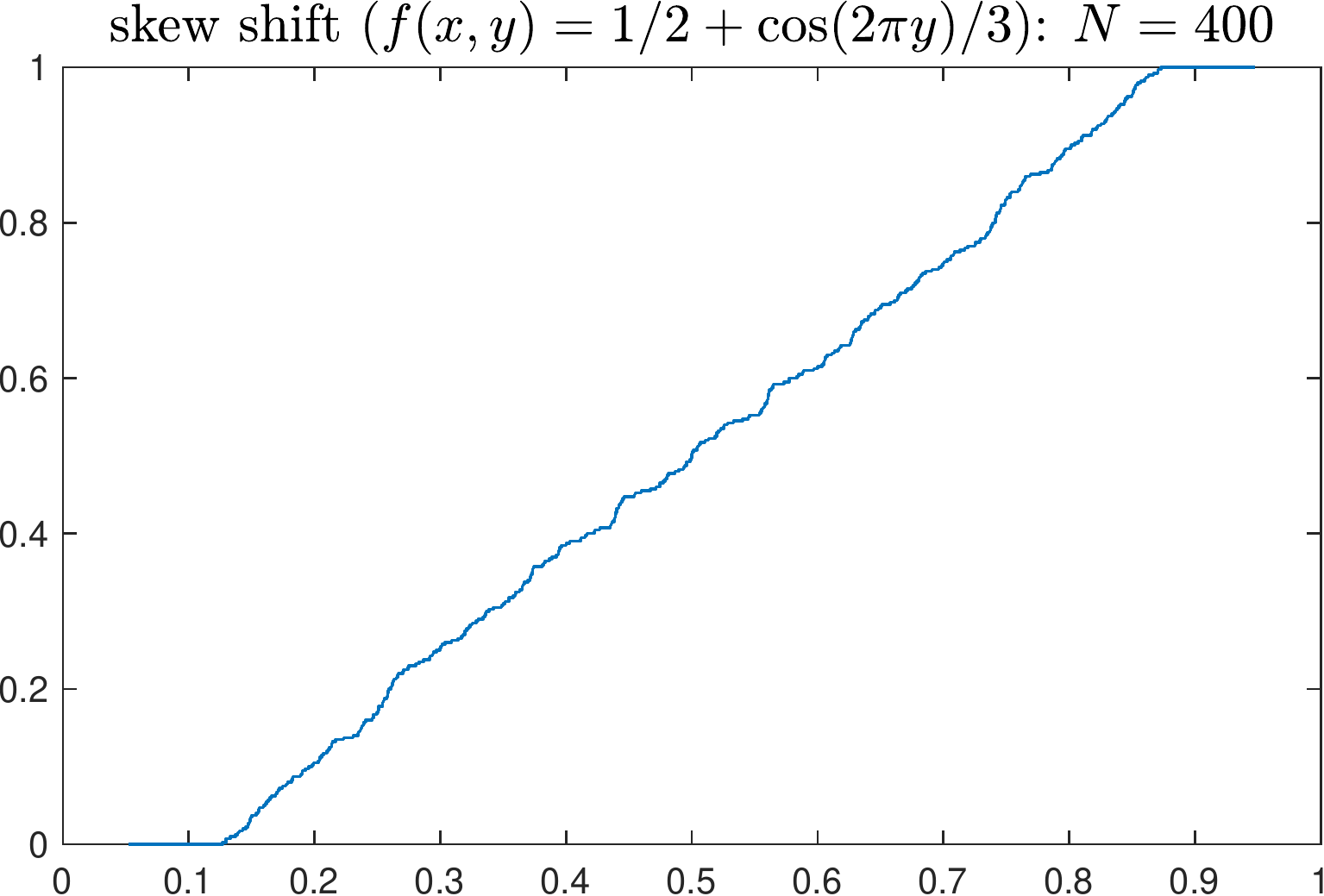}
\quad
\includegraphics[scale=.45]{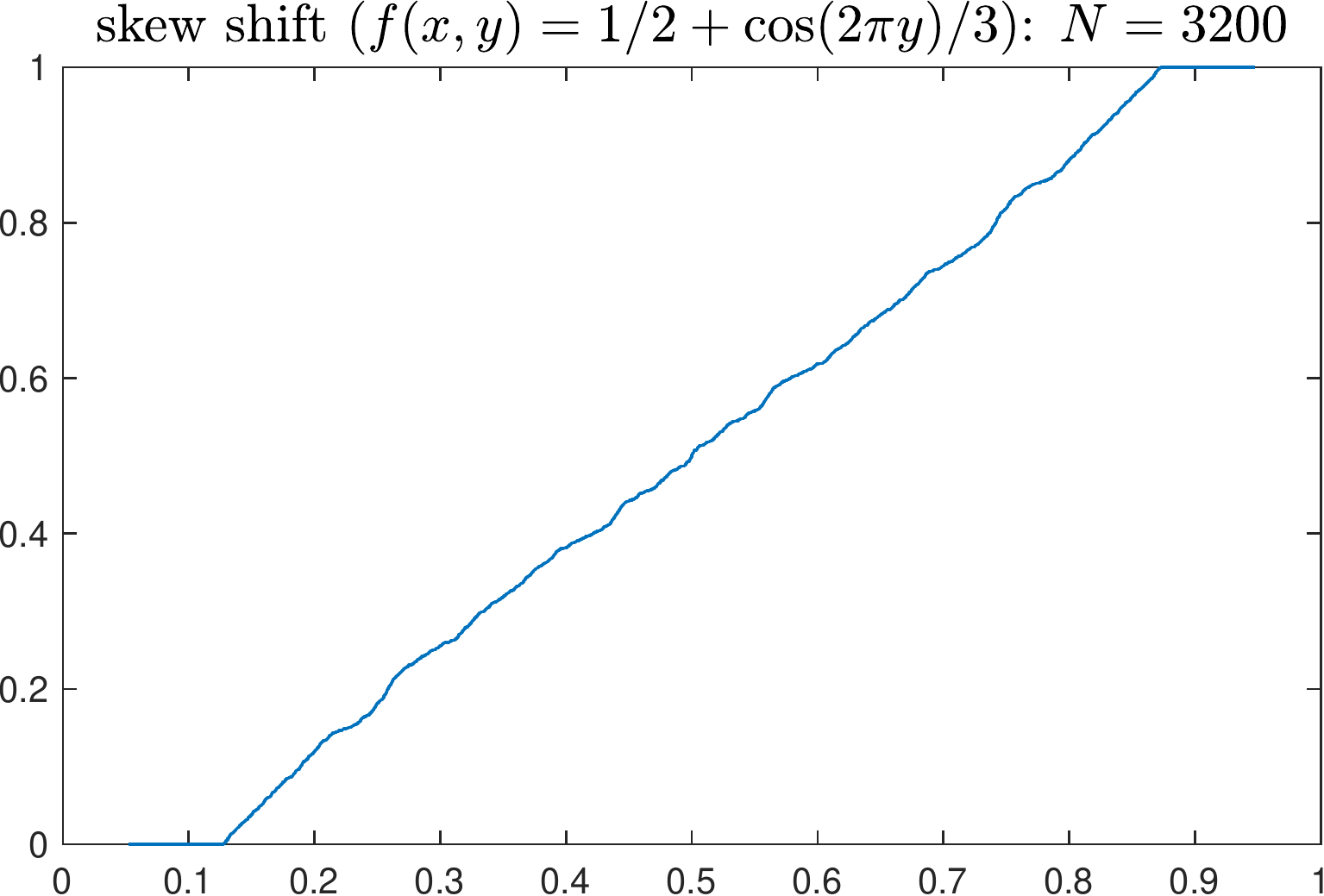}
\end{center}

\begin{center}
\includegraphics[scale=.45]{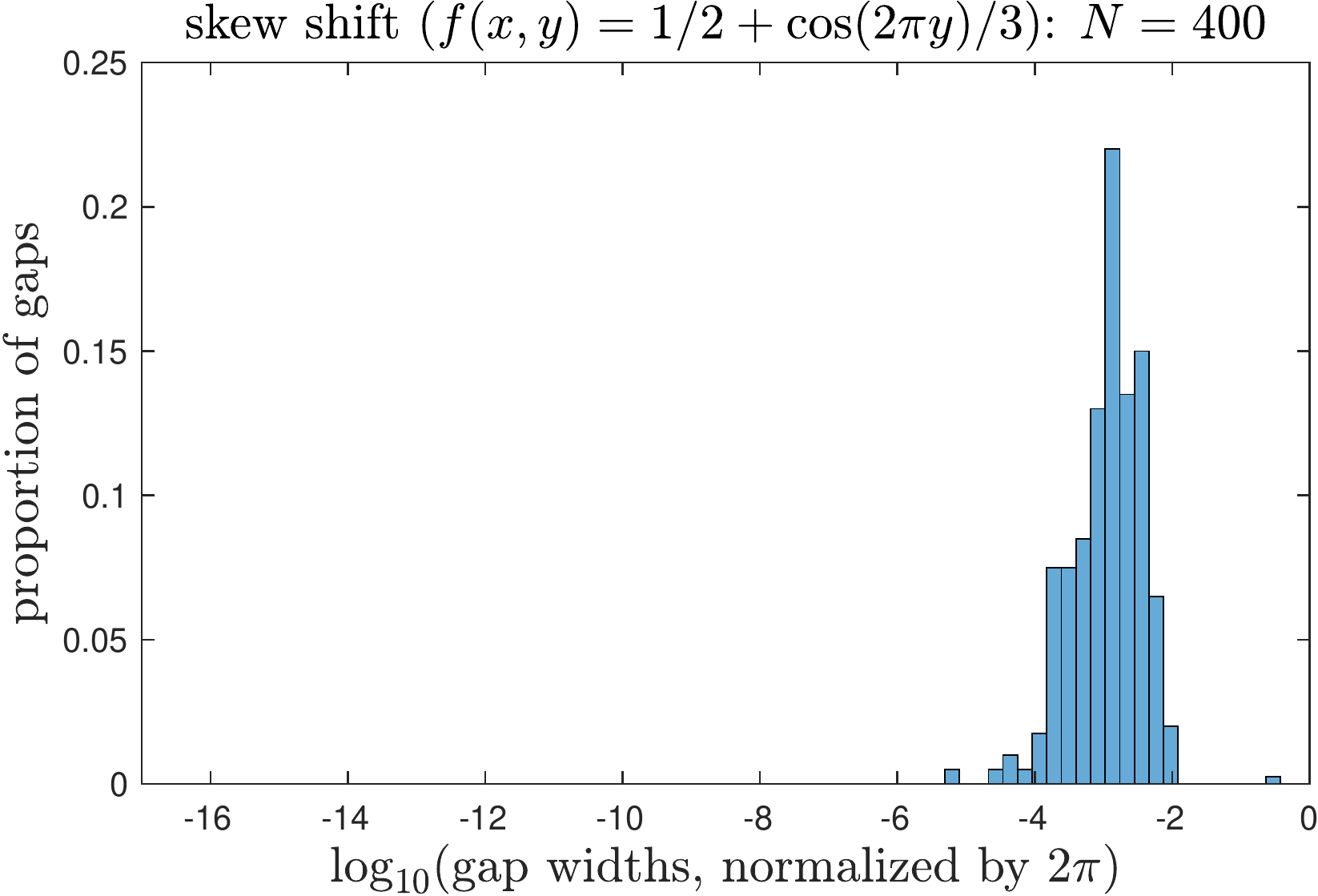}
\quad
\includegraphics[scale=.45]{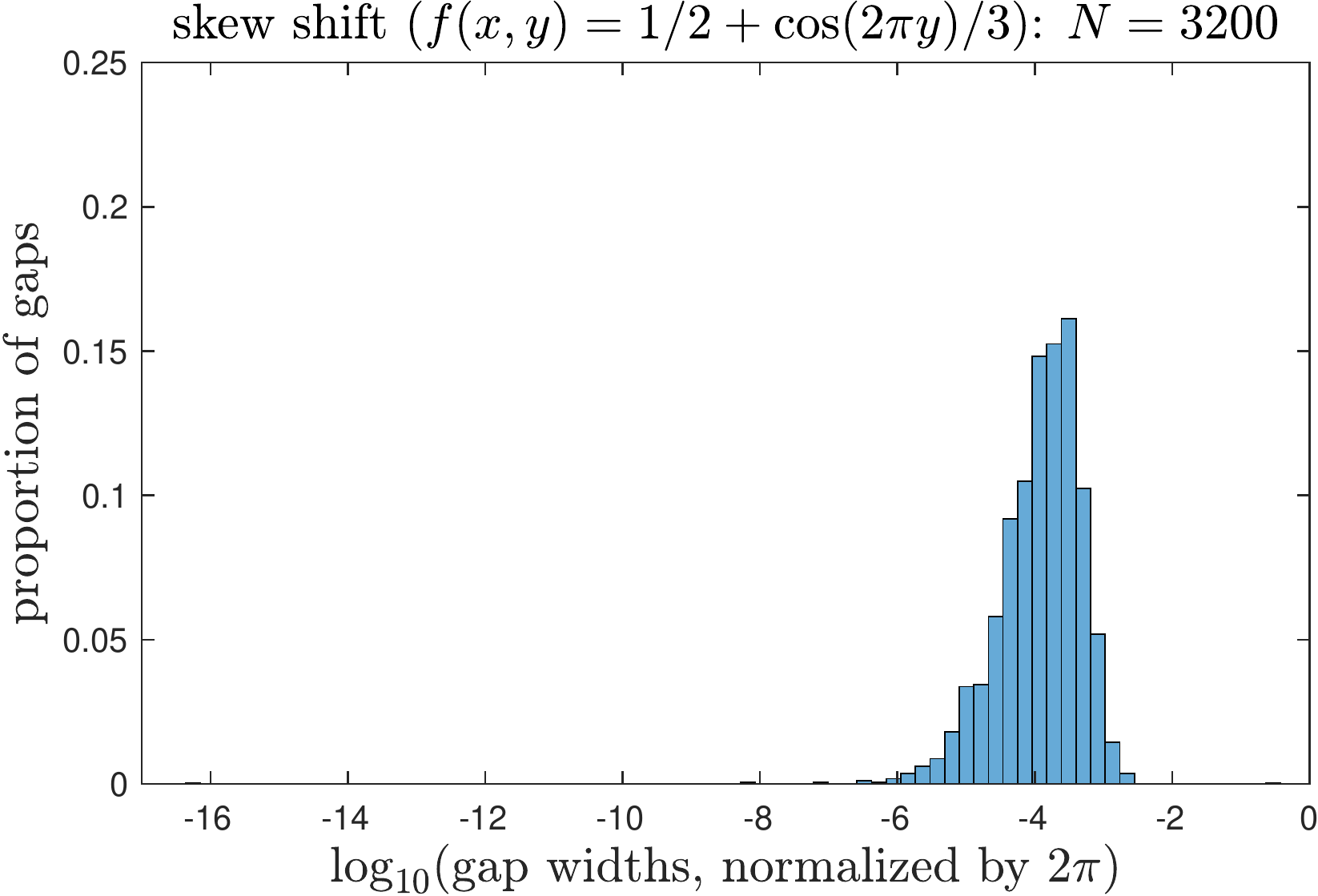}
\end{center}

While the application to Ising necessitates choosing strictly positive Verblunsky coefficients,  it is also of interest to look at sign-indefinite models from the CMV perspective. A notably interesting example is given by using the sampling function $f(x,y) = \lambda \cos(2\pi y)$ for some $0<\lambda<1$. The corresponding figures appear below. 
(Both skew shift examples give a numerically computed gap of width zero for $N=3200$, not shown on the histograms.)

\begin{center}
\includegraphics[scale=.45]{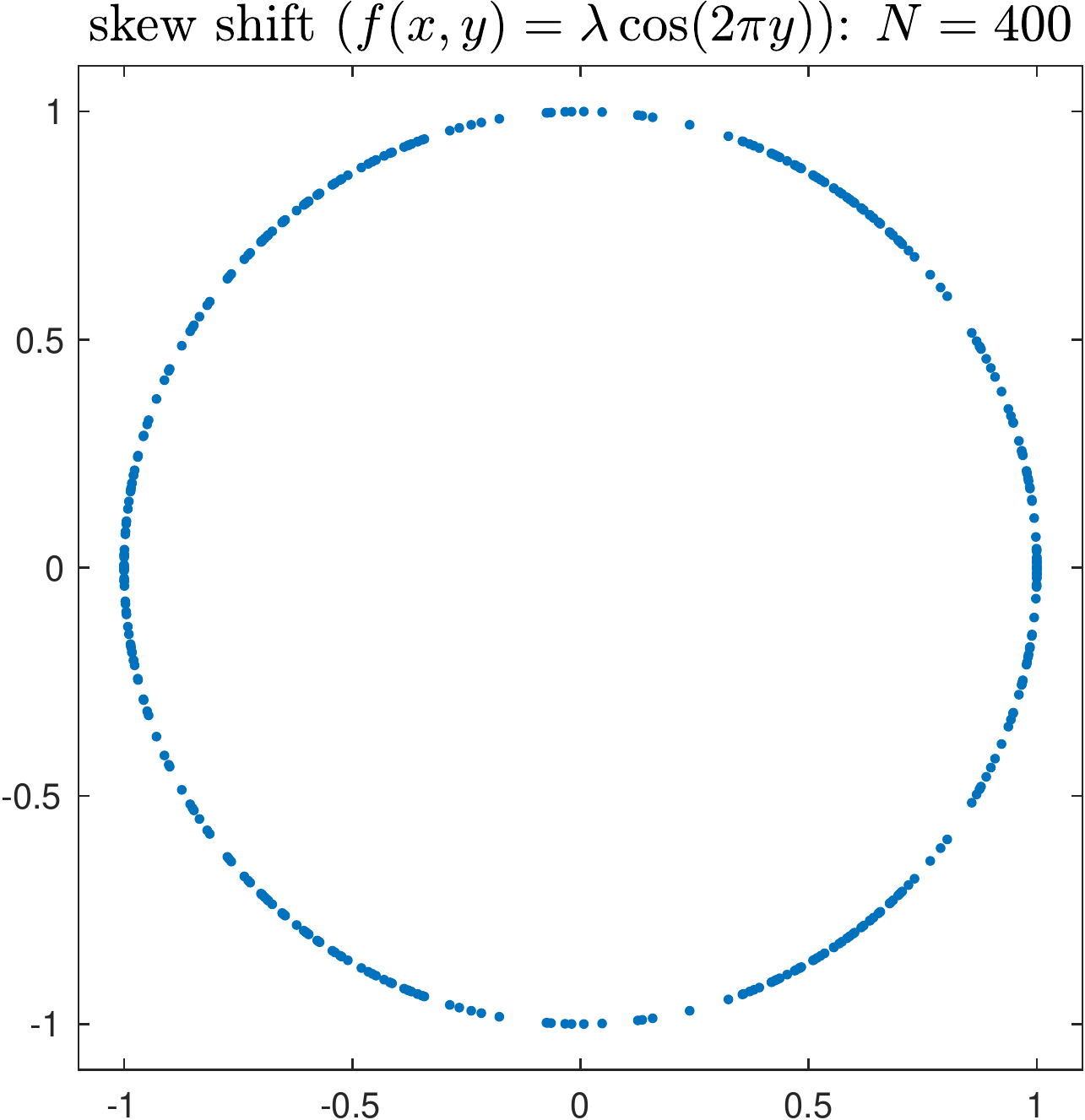}
\quad
\includegraphics[scale=.45]{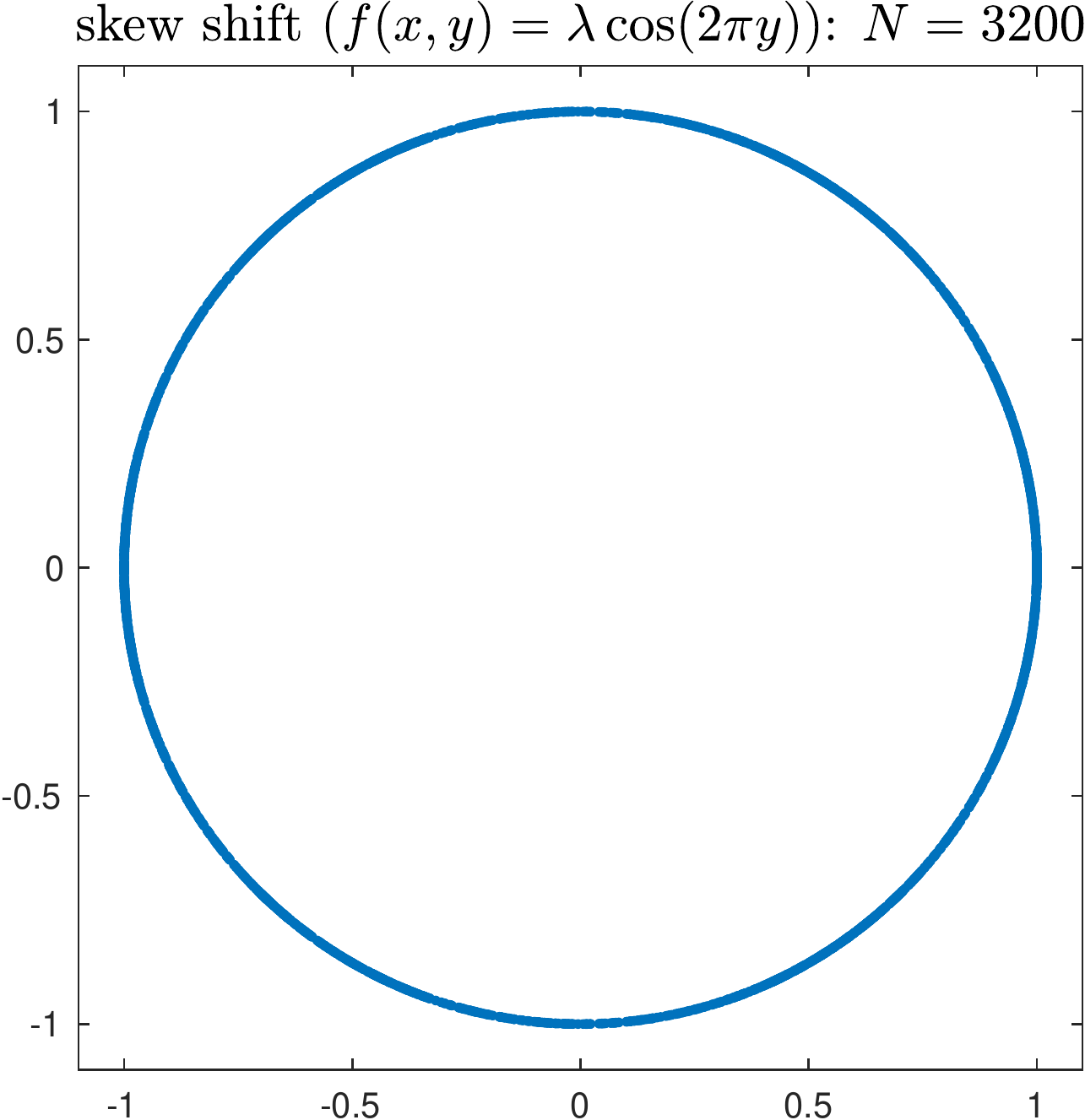}
\end{center}

\begin{center}
\includegraphics[scale=.45]{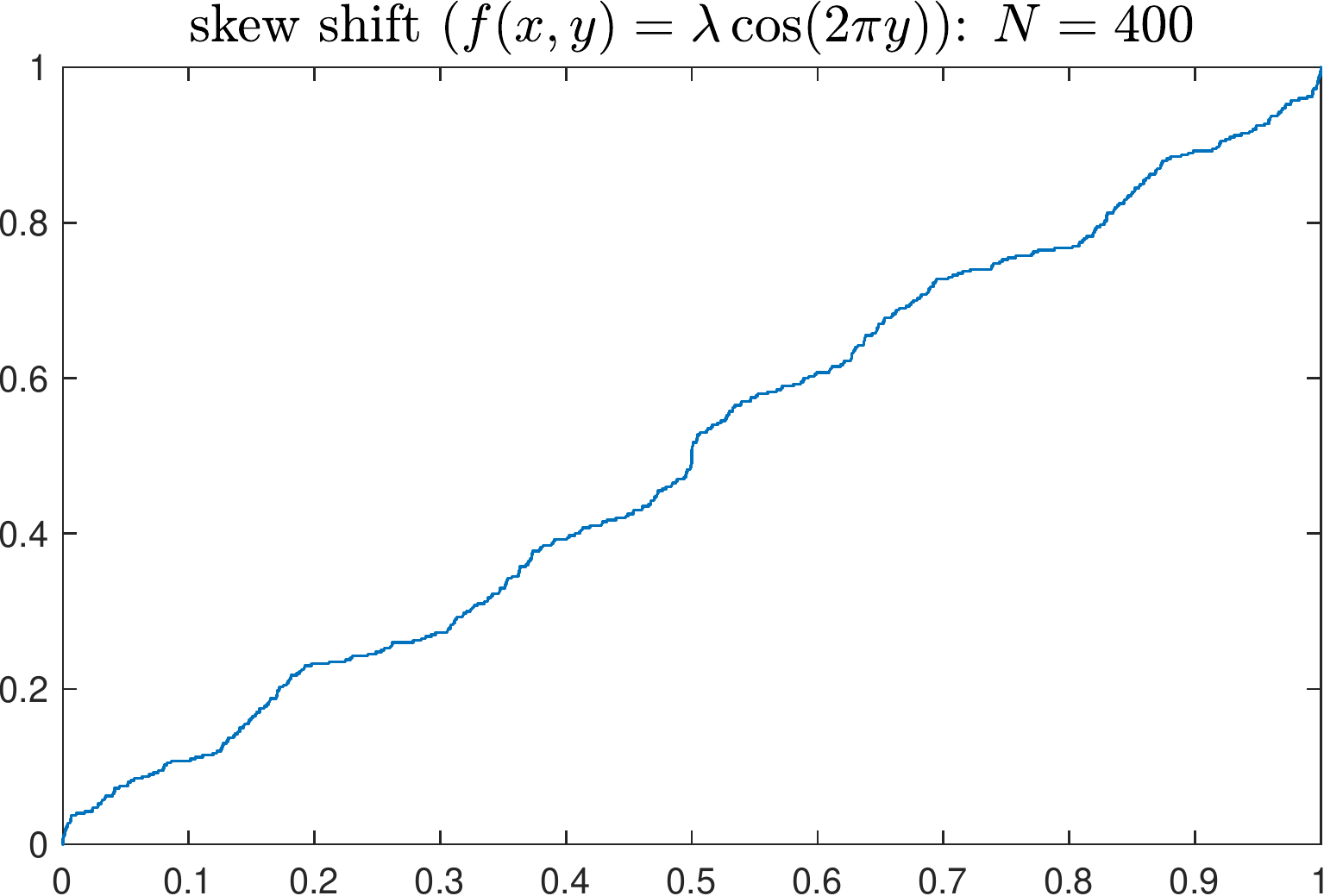}
\quad
\includegraphics[scale=.45]{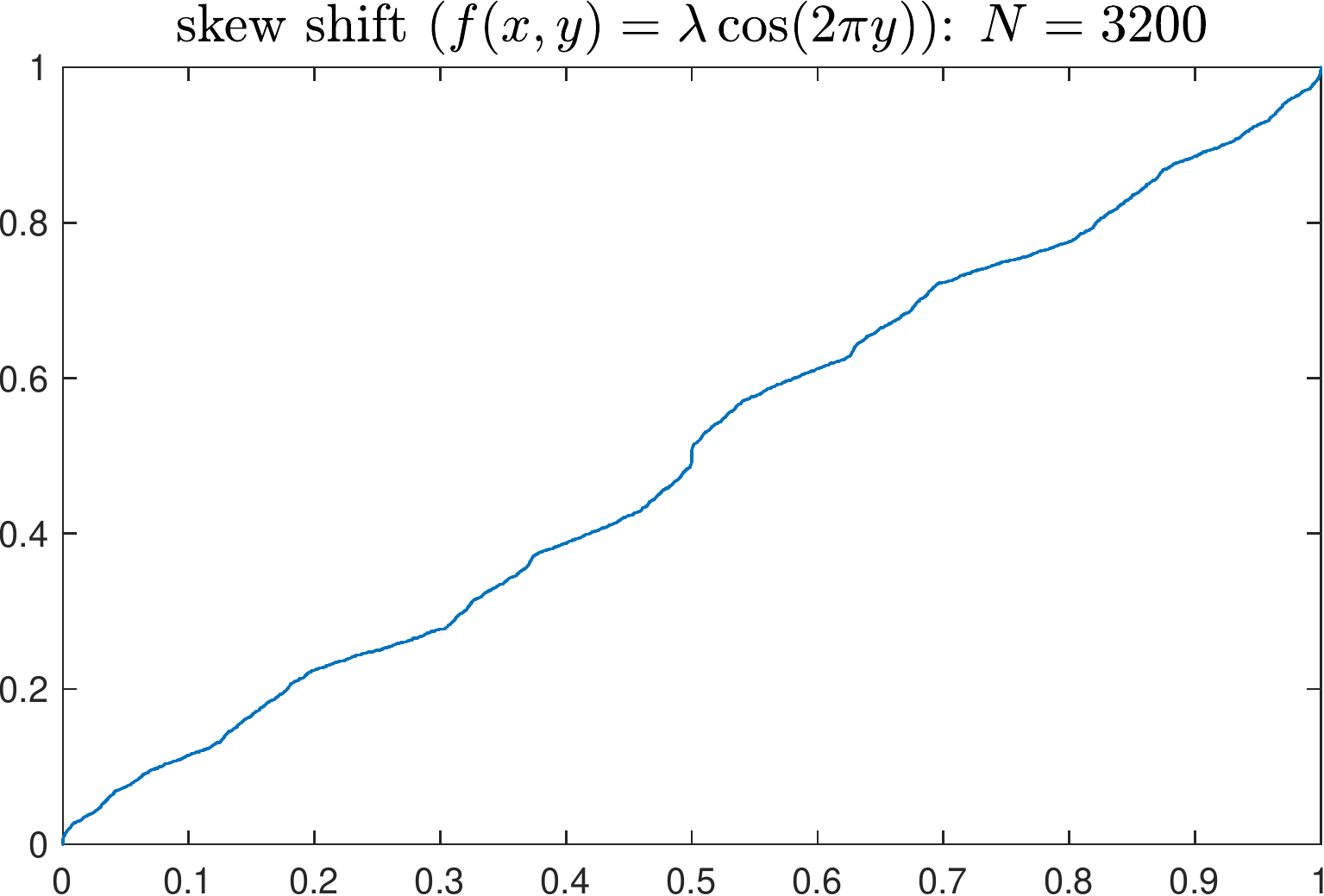}
\end{center}

\begin{center}
\includegraphics[scale=.45]{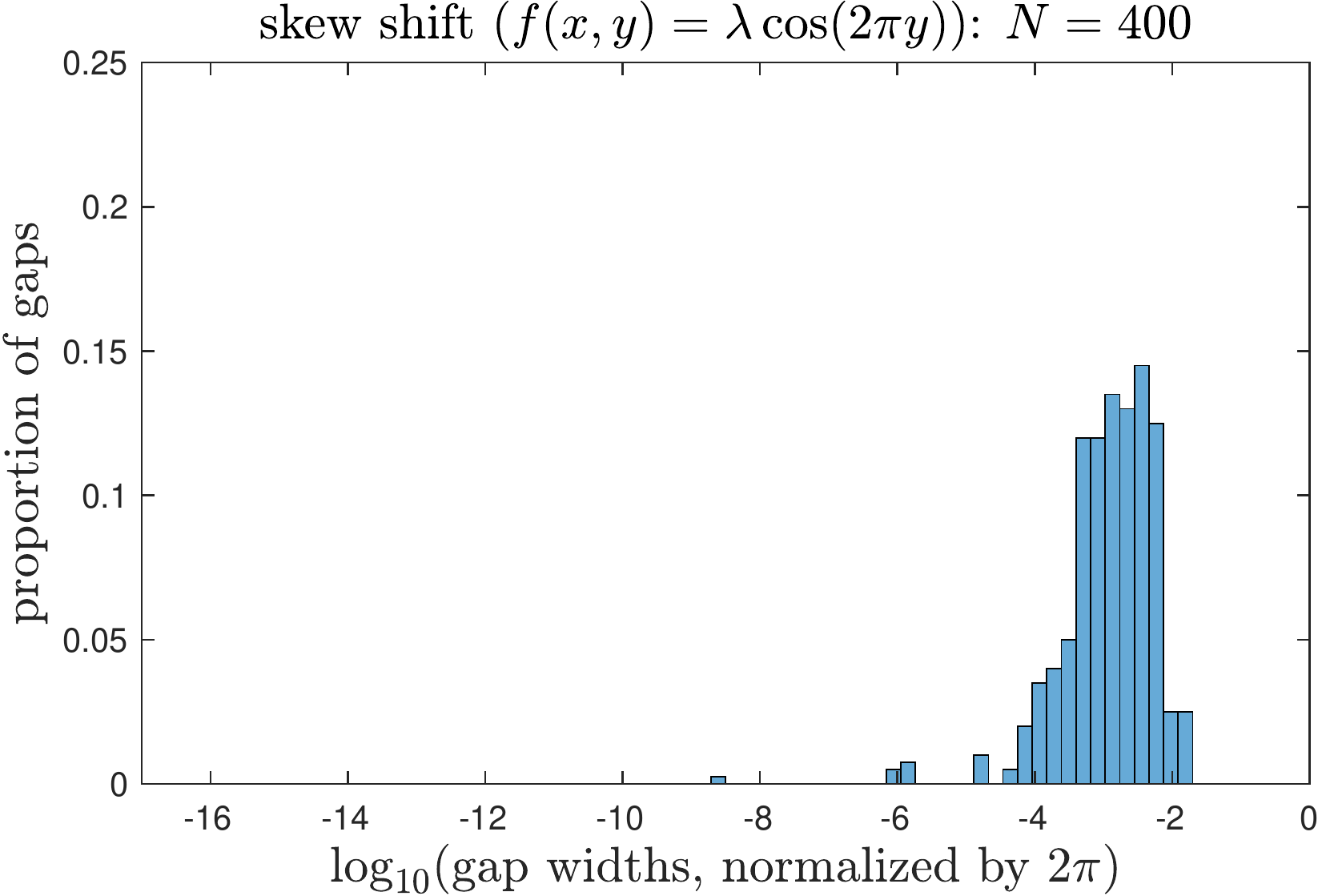}
\quad
\includegraphics[scale=.45]{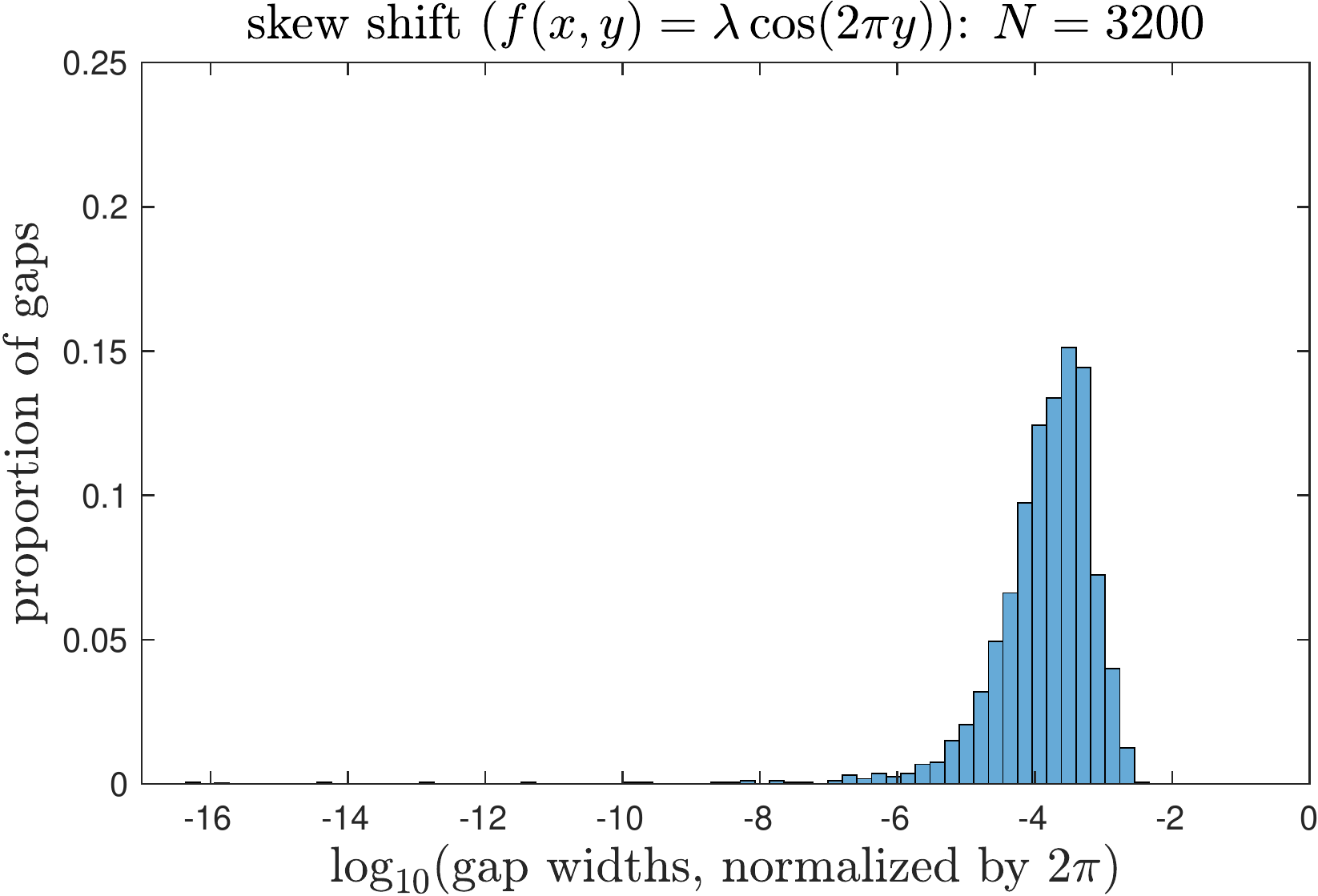}
\end{center}

\end{example}

\begin{example}[Unitary Almost-Mathieu Operator]
We conclude with the unitary almost-Matheiu operator, which was investigated in \cite{CFO, FOZ}. The coefficients are given by choosing $\gamma$ irrational, and constants $0<\lambda_1,\lambda_2<1$, and defining\begin{equation}\alpha_{2n}(x) = \sqrt{1-\lambda_2^2}, \quad \alpha_{2n-1}(x) =  \lambda_1\cos(2\pi(n\gamma+x)) \end{equation}for $x \in \bbT$.
The plots below use $\lambda_1 = 9/10$ and $\lambda_2 = \gamma = 1/\sqrt{2}$.

\begin{center}
\includegraphics[scale=.45]{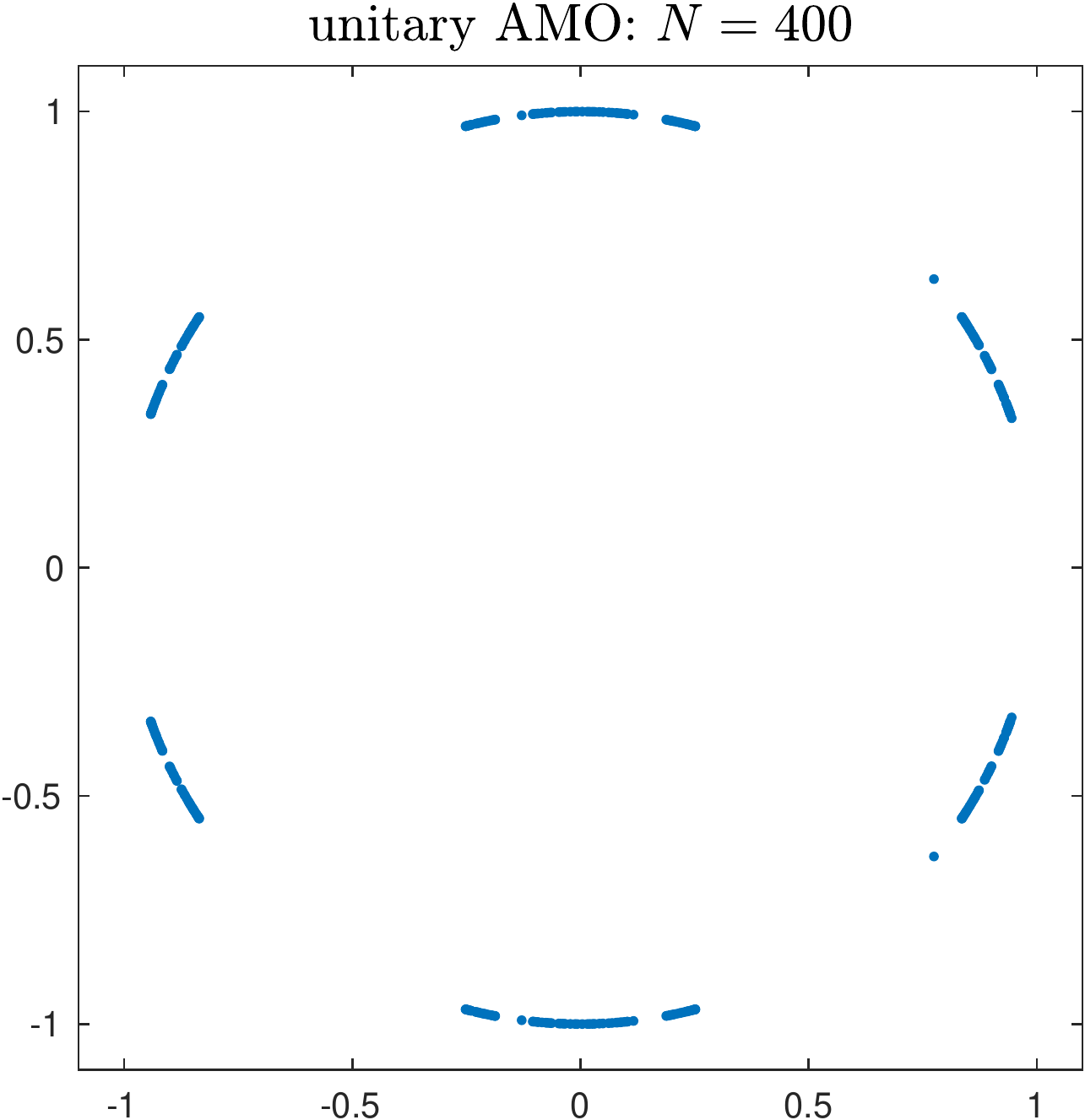}
\quad
\includegraphics[scale=.45]{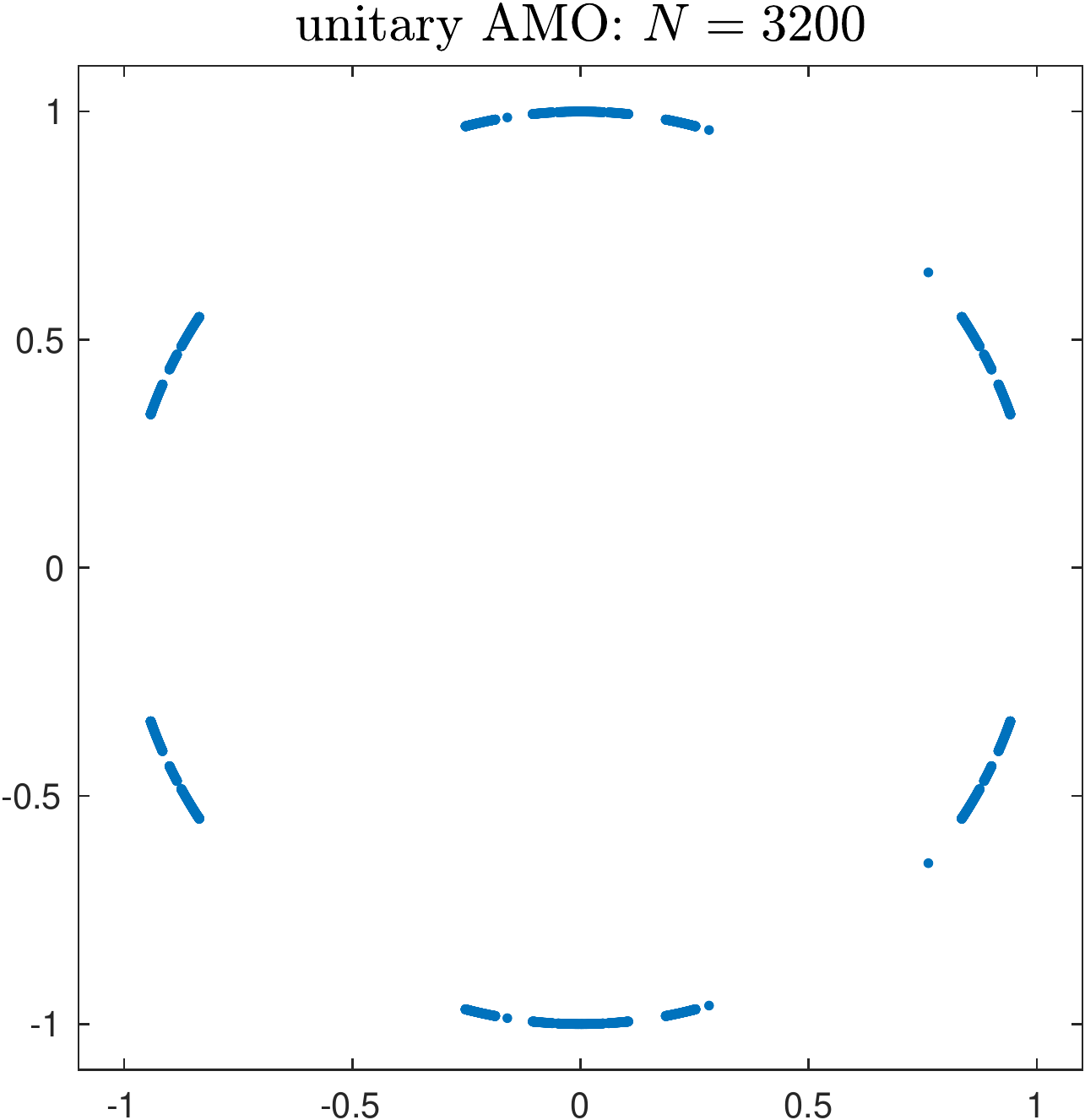}
\end{center}

\begin{center}
\includegraphics[scale=.45]{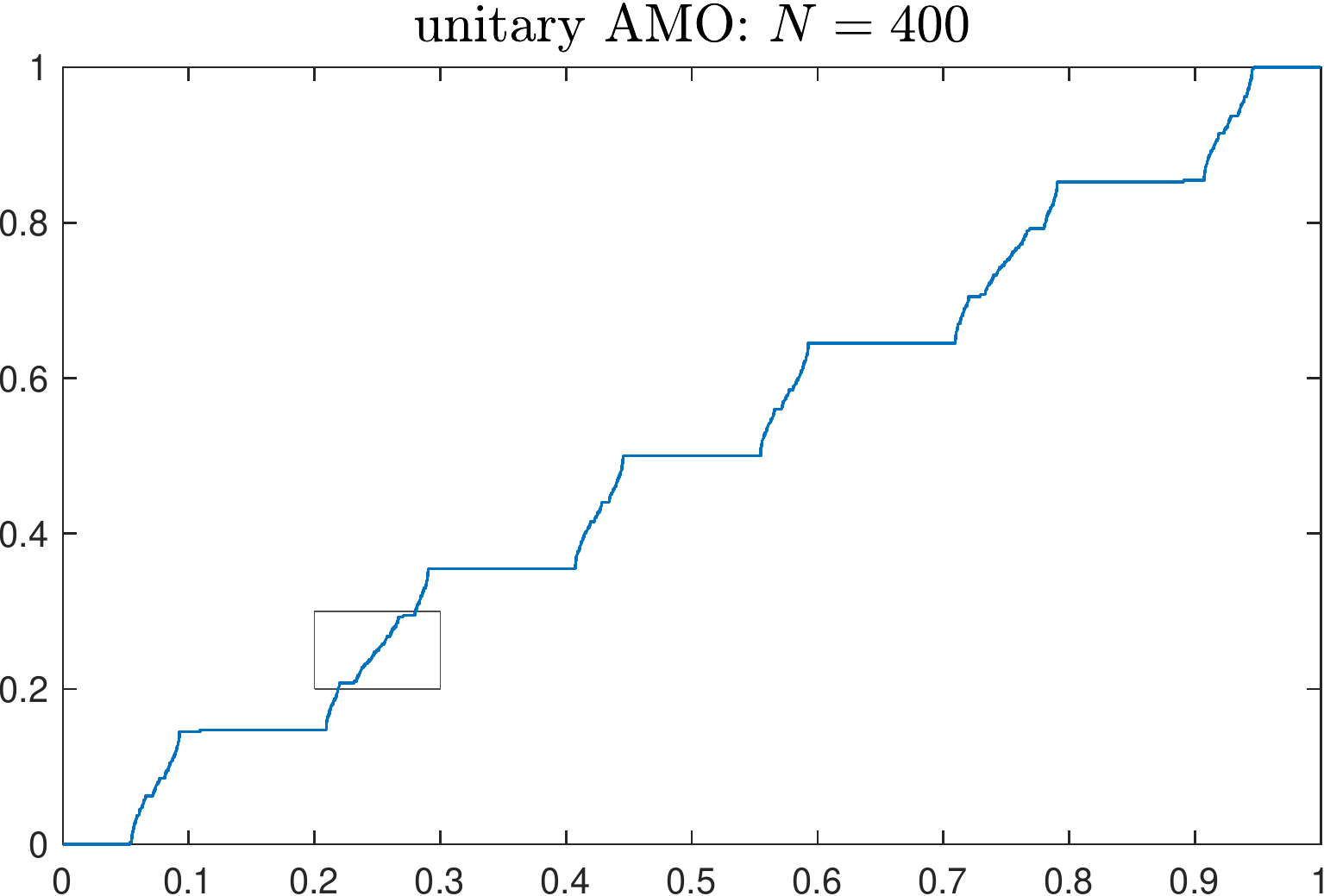}
\begin{picture}(0,0)
  \put(-188,72){\includegraphics[scale=0.18]{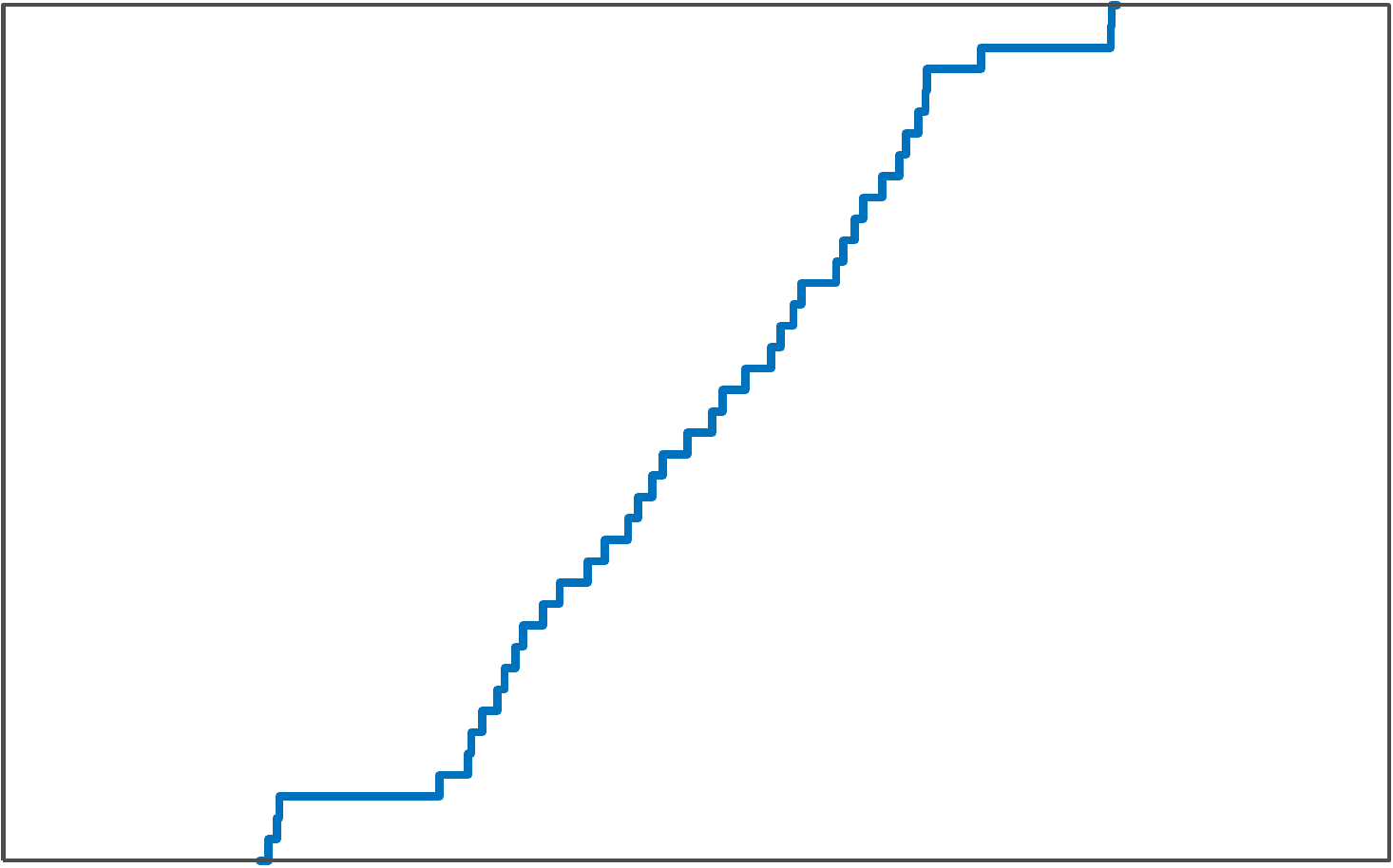}} 
\end{picture}
\qquad
\includegraphics[scale=.45]{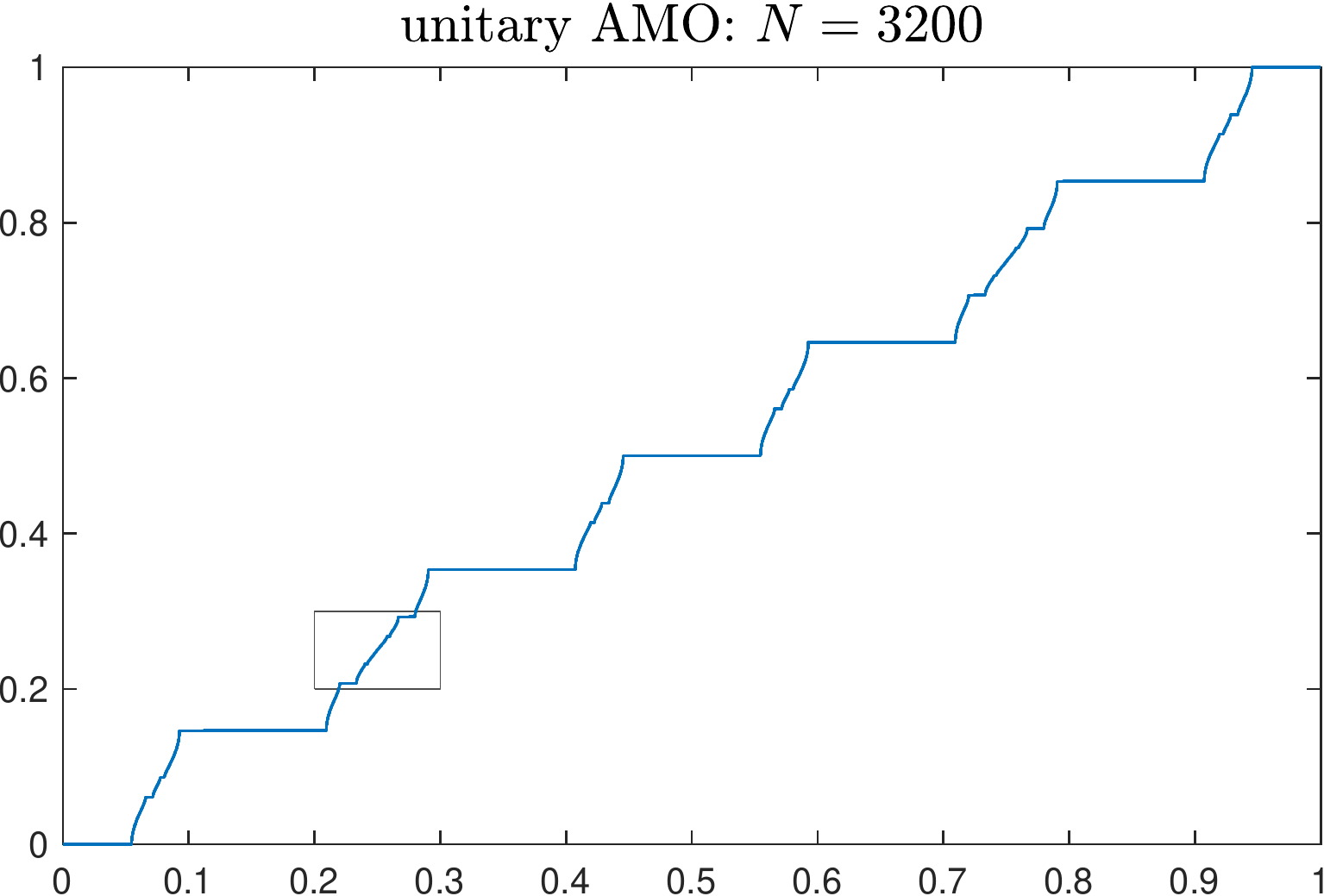}
\begin{picture}(0,0)
  \put(-188,71.75){\includegraphics[scale=0.18]{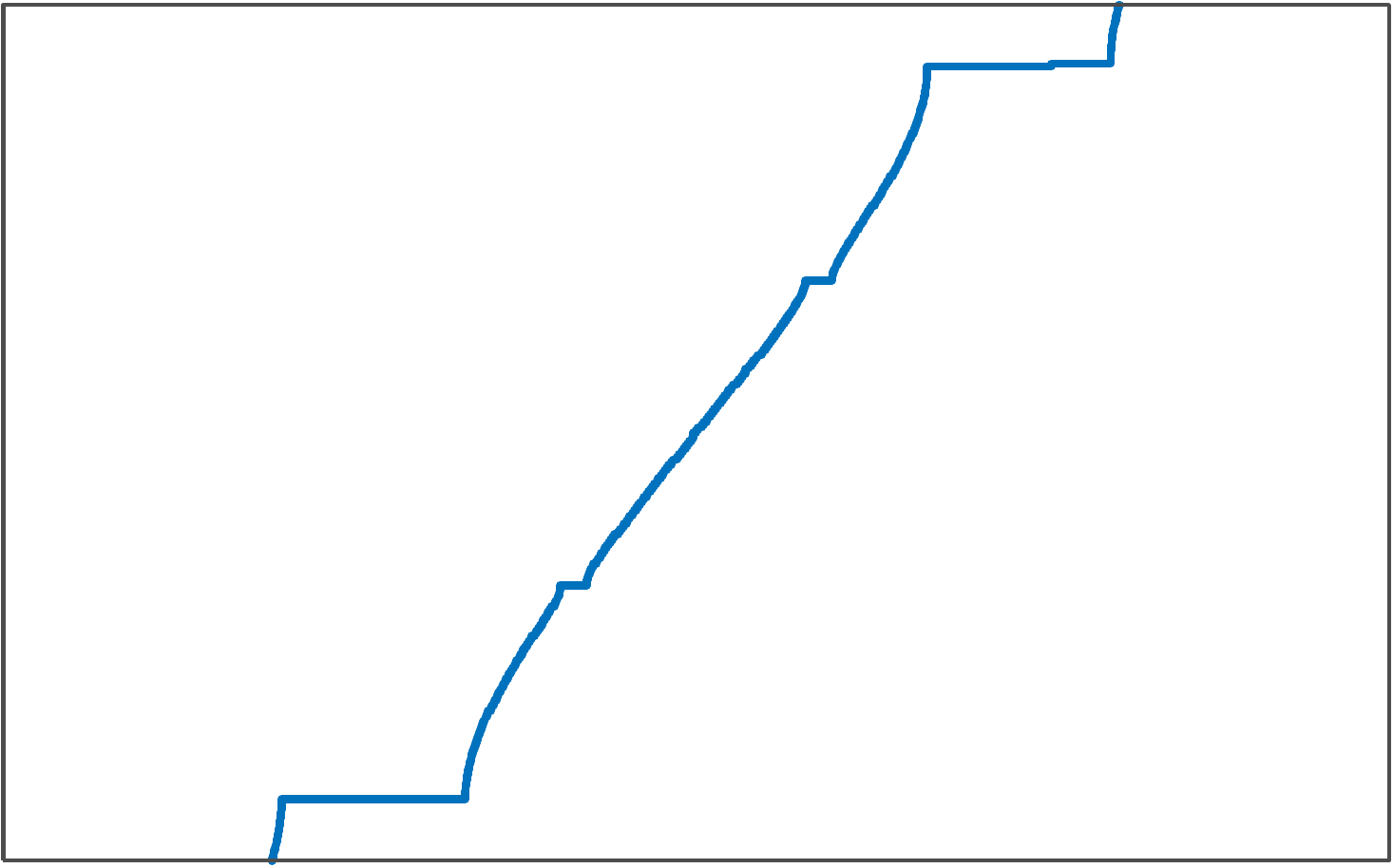}}
\end{picture}
\end{center}

\begin{center}
\includegraphics[scale=.45]{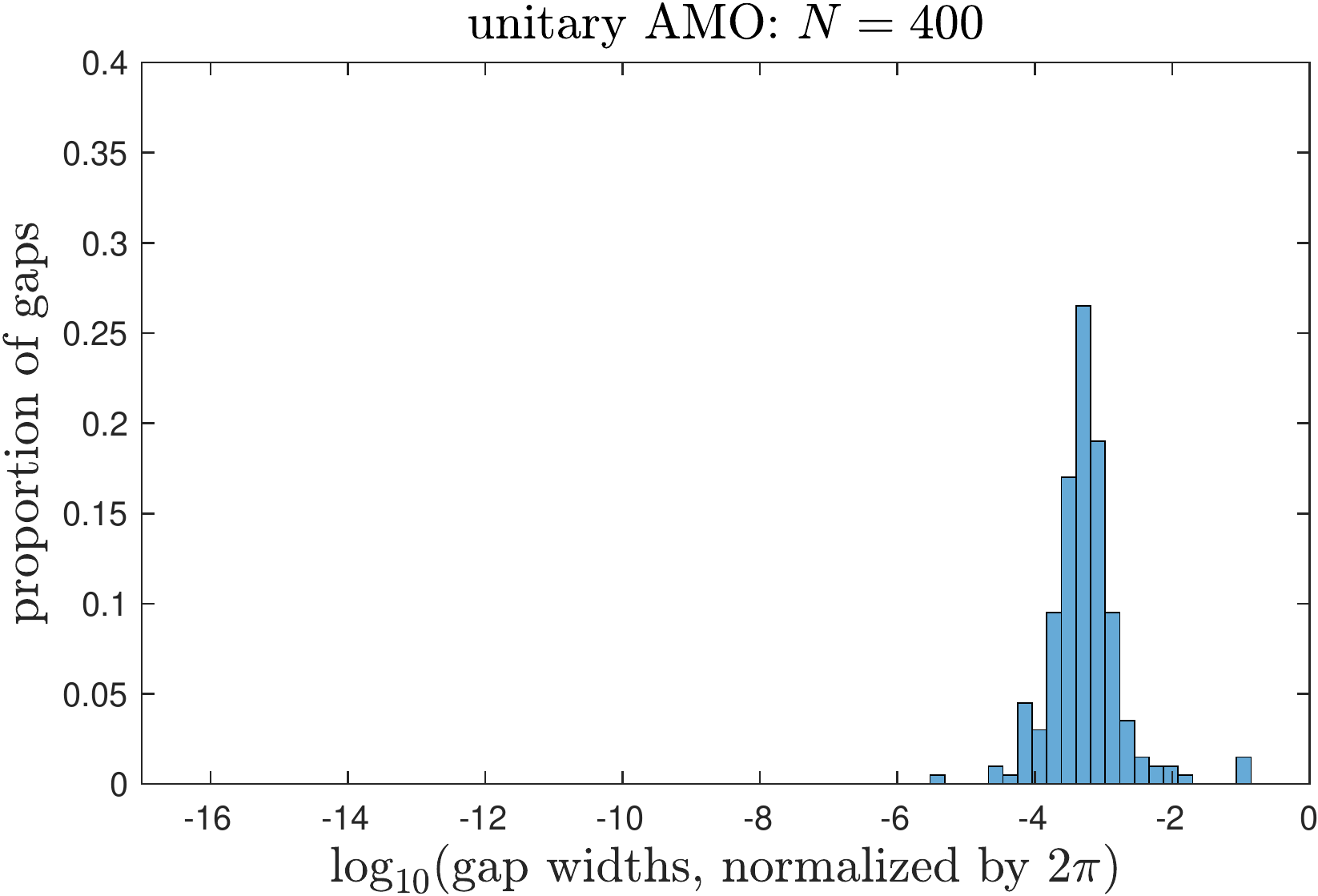}
\quad
\includegraphics[scale=.45]{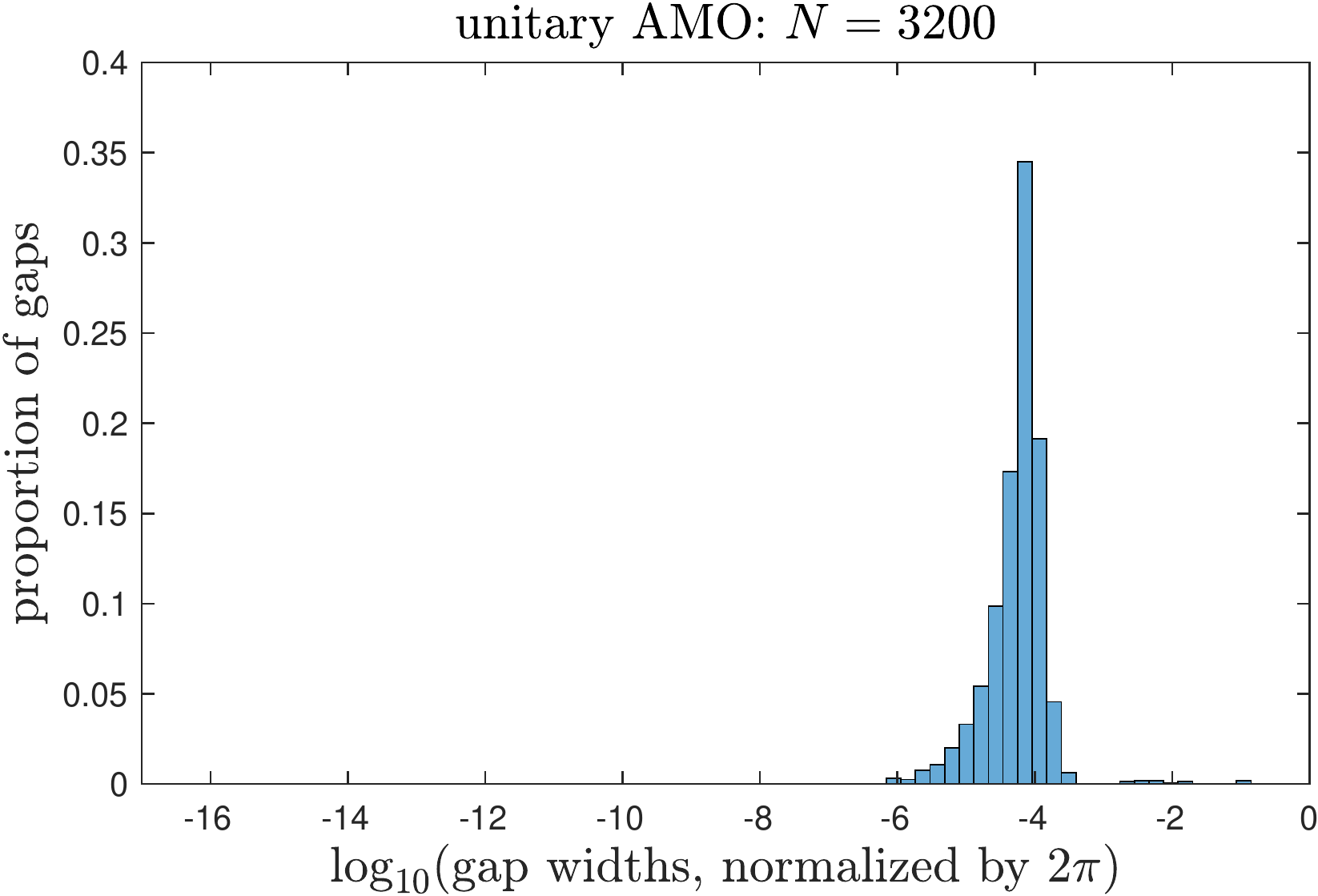}
\end{center}

\end{example}

These plots suggest many interesting problems. We hope this work inspires some readers to study these questions in more detail, with the goal of proving some rigorous results. For instance, it would be very interesting to confirm that the spectra of the skew-shift models proposed above are connected subsets of the circle.

\bibliographystyle{abbrv}

\bibliography{gapbib}

\begin{thebibliography}{10}

\bibitem{BaaGriJos1993}
M.~Baake, U.~Grimm, and D.~Joseph.
\newblock Trace maps, invariants, and some of their applications.
\newblock {\em Internat. J. Modern Phys. B}, 7:1527--1550, 1993.

\bibitem{BaaGriPis1995JSP}
M.~Baake, U.~Grimm, and C.~Pisani.
\newblock Partition function zeros for aperiodic systems.
\newblock {\em J. Statist. Phys.}, 78(1-2):285--297, 1995.

\bibitem{Baxter1989}
R.~J. Baxter.
\newblock {\em Exactly Solved Models in Statistical Mechanics}.
\newblock Dover, 2008.

\bibitem{Bel1992b}
J.~Bellissard.
\newblock Gap labelling theorems for {S}chr\"{o}dinger operators.
\newblock In {\em From Number Theory to Physics ({L}es {H}ouches, 1989)}.
  Springer, Berlin, 1992.
\newblock Eds. M. Waldschmidt and P. Moussa and J. M. Luck and C. Itzykson.

\bibitem{Bel2003}
J.~Bellissard.
\newblock The noncommutative geometry of aperiodic solids.
\newblock In {\em Geometric and Topological Methods for Quantum Field Theory},
  pages 86--156. World Scientific, River Edge, NJ, 2003.
\newblock Eds. A. Cardona and S. Paycha and H. Ocampo.

\bibitem{BelBovGhe1992}
J.~Bellissard, A.~Bovier, and J.-M. Ghez.
\newblock Gap labelling theorems for one-dimensional discrete {S}chr\"{o}dinger
  operators.
\newblock {\em Rev. Math. Phys.}, 4(1):1--37, 1992.

\bibitem{BourgainSchlag2000CMP}
J.~Bourgain and W.~Schlag.
\newblock Anderson localization for {S}chr\"{o}dinger operators on {$\bf Z$}
  with strongly mixing potentials.
\newblock {\em Commun. Math. Phys.}, 215(1):143--175, 2000.

\bibitem{BrinStuck2015Book}
M.~Brin and G.~Stuck.
\newblock {\em Introduction to Dynamical Systems}.
\newblock Cambridge University Press, Cambridge, 2015.

\bibitem{Brush1967RMP}
S.~G. Brush.
\newblock History of the {Lenz}--{Ising} model.
\newblock {\em Rev. Mod. Phys.}, 39:883--893, 1967.

\bibitem{BE91}
A.~Bunse-Gerstner and L.~Elsner.
\newblock {Schur} parameter pencils for the solution of the unitary
  eigenproblem.
\newblock {\em Linear Algebra Appl.}, 154--156:741--778, 1991.

\bibitem{CFO}
C.~Cedzich, J.~Fillman, and D.~C. Ong.
\newblock Almost everything about the unitary almost-{M}athieu operator.
\newblock 2022.
\newblock arxiv:2112.03216.

\bibitem{DFGap}
D.~Damanik and J.~Fillman.
\newblock Gap labelling for discrete one-dimensional ergodic {S}chr\"odinger
  operators.
\newblock 2022.
\newblock arXiv:2203.03696.

\bibitem{ESO1}
D.~Damanik and J.~Fillman.
\newblock {\em One-Dimensional Ergodic Schr\"{o}dinger Operators I. General
  Theory}.
\newblock American Mathematical Society, Providence, RI, 2022.

\bibitem{DFLY2015IMRN}
D.~Damanik, J.~Fillman, M.~Lukic, and W.~Yessen.
\newblock Uniform hyperbolicity for {S}zeg{\H{o}} cocycles and applications to
  random {CMV} matrices and the {I}sing model.
\newblock {\em Int. Math. Res. Not.}, (16):7110--7129, 2015.

\bibitem{DFLY2016DCDS}
D.~Damanik, J.~Fillman, M.~Lukic, and W.~Yessen.
\newblock Characterizations of uniform hyperbolicity and spectra of {CMV}
  matrices.
\newblock {\em Discrete Contin. Dyn. Syst. Ser. S}, 9(4):1009--1023, 2016.

\bibitem{DamMunYes2013JSP}
D.~Damanik, P.~Munger, and W.~N. Yessen.
\newblock Orthogonal polynomials on the unit circle with {F}ibonacci
  {V}erblunsky coefficients, {II}. {A}pplications.
\newblock {\em J. Stat. Phys.}, 153(2):339--362, 2013.

\bibitem{FOZ}
J.~Fillman, D.~C. Ong, and Z.~Zhang.
\newblock Spectral characteristics of the unitary critical almost-{M}athieu
  operator.
\newblock {\em Commun. Math. Phys.}, 351(2):525--561, 2017.

\bibitem{GeronimoJohnson1996JDE}
J.~S. Geronimo and R.~A. Johnson.
\newblock Rotation number associated with difference equations satisfied by
  polynomials orthogonal on the unit circle.
\newblock {\em J. Differential Equations}, 132(1):140--178, 1996.

\bibitem{JohnMos1982CMP}
R.~Johnson and J.~Moser.
\newblock The rotation number for almost periodic potentials.
\newblock {\em Commun. Math. Phys.}, 84(3):403--438, 1982.

\bibitem{Johnson1986JDE}
R.~A. Johnson.
\newblock Exponential dichotomy, rotation number, and linear differential
  operators with bounded coefficients.
\newblock {\em J. Differential Equations}, 61(1):54--78, 1986.

\bibitem{KatokHassel1995Book}
A.~Katok and B.~Hasselblatt.
\newblock {\em Introduction to the Modern Theory of Dynamical Systems},
  volume~54 of {\em Encyclopedia of Mathematics and its Applications}.
\newblock Cambridge University Press, Cambridge, 1995.

\bibitem{LeeYang1952PR}
T.~D. Lee and C.~N. Yang.
\newblock Statistical theory of equations of state and phase transitions. {II}:
  {L}attice gas and {I}sing model.
\newblock {\em Phys. Rev. (2)}, 87:410--419, 1952.

\bibitem{Ove01}
M.~L. Overton.
\newblock {\em Numerical Computing with {IEEE} Floating Point Arithmetic}.
\newblock SIAM, Philadelphia, 2001.

\bibitem{PEF2015IEOT}
C.~Puelz, M.~Embree, and J.~Fillman.
\newblock Spectral approximation for quasiperiodic {J}acobi operators.
\newblock {\em Integral Equations Operator Theory}, 82(4):533--554, 2015.

\bibitem{Schwartzman1957Annals}
S.~Schwartzman.
\newblock Asymptotic cycles.
\newblock {\em Ann. of Math. (2)}, 66:270--284, 1957.

\bibitem{Simon2005:OPUC1}
B.~Simon.
\newblock {\em Orthogonal Polynomials on the Unit Circle. {P}art 1: Classical
  Theory}.
\newblock American Mathematical Society, Providence, RI, 2005.

\bibitem{Simon2005:OPUC2}
B.~Simon.
\newblock {\em Orthogonal Polynomials on the Unit Circle. {P}art 2: Spectral
  Theory}.
\newblock American Mathematical Society, Providence, RI, 2005.

\bibitem{TT94}
K.-C. Toh and L.~N. Trefethen.
\newblock Pseudozeros of polynomials and pseudospectra of companion matrices.
\newblock {\em Numer.\ Math.}, 68:403--425, 1994.

\bibitem{VW12}
R.~Vandebril and D.~S. Watkins.
\newblock A generalization of the multishift {QR} algorithm.
\newblock {\em SIAM J.~Matrix Anal.\ Appl.}, 33:759--779, 2012.

\bibitem{Walters1982:ErgTh}
P.~Walters.
\newblock {\em An Introduction to Ergodic Theory}.
\newblock Springer-Verlag, New York-Berlin, 1982.

\bibitem{YessenThesis}
W.~N. Yessen.
\newblock {\em Dynamical {M}ethods in {O}ne-{D}imensional {Q}uasi-{P}eriodic
  {L}attice {M}odels}.
\newblock PhD thesis, University of California, Irvine, 2013.

\end{thebibliography}

\end{document}